\documentclass[12pt]{article}
\usepackage[utf8]{inputenc}
\usepackage{amsfonts}
\usepackage{amsmath}
\usepackage{amssymb}
\usepackage{amsthm}
\usepackage{enumerate}
\usepackage{enumitem}
\usepackage{epsfig}
\usepackage{color}
\usepackage{mathtools}
\usepackage{wrapfig}

\usepackage{marginnote}

\usepackage{float}

\usepackage[authoryear,sort,round]{natbib}  

\usepackage{subfigure}

\usepackage[normalem]{ulem}
\usepackage{soul}
\usepackage[dvipsnames]{xcolor}
\setlength{\marginparwidth}{2cm}
\usepackage{todonotes}

\usepackage{thmtools}
\usepackage{thm-restate}

\usepackage{mathptmx} 

\usepackage[backref=page]{hyperref}


 \renewcommand*{\backrefalt}[4]{%
    \ifcase #1%
     \or (see page:~#2)%
     \else (see pages:~#2)%
    \fi%
    }

\usepackage{nicefrac,cancel}
\usepackage[noabbrev,capitalise,nosort,nameinlink]{cleveref}

\crefname{openproblem}{Open Problem}{Open Problem}
\crefname{notation}{Notation}{Notation}

\setlength{\bibsep}{0pt plus 0.3ex}
\newcommand*{\arXiv}[1]{\bgroup\color{blue}\href{https://arxiv.org/abs/#1}{arXiv:#1}\egroup}
\newcommand*{\doi}[1]{\bgroup\color{blue}\href{https://doi.org/#1}{doi:#1}\egroup}
\newcommand*{\email}[1]{\bgroup\color{blue}\href{mailto:#1}{#1}\egroup}
\renewcommand*{\url}[1]{\bgroup\color{blue}\href{#1}{#1}\egroup}


\renewcommand{\d}{\mathrm d}
\newcommand{\R}{\mathbb R}
\newcommand{\N}{\mathbb N}
\newcommand{\calN}{\mathcal N}
\newcommand{\cH}{\mathcal H}

\newcommand{\eps}{\varepsilon}

\newcommand{\diag}{\mathrm{diag}}
\DeclareMathOperator{\argmax}{argmax}

\makeatletter
\newcommand*\quark{\mathpalette\quark@{.5}}
\newcommand*\quark@[2]{\mathbin{\vcenter{\hbox{\scalebox{#2}{$\; \m@th#1\bullet \;$}}}}}
\makeatother

\usepackage[a4paper, lmargin=0.1666\paperwidth, rmargin=0.1666\paperwidth, tmargin=0.1111\paperheight, bmargin=0.1111\paperheight]{geometry} 
\usepackage{parskip}
\setlength{\parindent}{0pt}
\usepackage[all]{nowidow} 
\usepackage[protrusion=true,expansion=true]{microtype} 
\usepackage{breakcites}

\usepackage{lipsum} 

\theoremstyle{plain}
\newtheorem{theoremX}{\sffamily Theorem}[section]
\newenvironment{theorem}
  {\pushQED{\qed}\theoremX}
  {\popQED\endtheoremX}
\crefname{theoremX}{theorem}{theorems}  
\Crefname{theoremX}{Theorem}{Theorems}  
  
\newtheorem{proposition}[theoremX]{\sffamily Proposition}

\newenvironment{lemma}
  {\pushQED{\qed}\lemmaX}
  {\popQED\endlemmaX}
\crefname{lemmaX}{lemma}{lemmata}  
\Crefname{lemmaX}{Lemma}{Lemmata}

\newenvironment{corollary}
  {\pushQED{\qed}\corollaryX}
  {\popQED\endcorollaryX}
\crefname{corollaryX}{Corollary}{Corollaries}
\Crefname{corollaryX}{Corollary}{Corollaries}

\newenvironment{conjecture}
  {\pushQED{\qed}\conjectureX}
  {\popQED\endconjectureX}
\crefname{conjectureX}{Conjecture}{Conjectures}
\Crefname{conjectureX}{Conjecture}{Conjectures}

\theoremstyle{definition}

\newenvironment{definition}
  {\pushQED{\qed}\definitionX}
  {\popQED\enddefinitionX}
\crefname{definitionX}{definition}{definitions}  
\Crefname{definitionX}{Definition}{Definitions}  
\crefname{condition}{Conditions}{Conditions} 
\Crefname{condition}{Condition}{Conditions}

\newtheorem{notation}[theoremX]{\sffamily Notation}

\newtheorem{condition}[theoremX]{\sffamily Condition}

\newenvironment{remark}
  {\pushQED{\qed}\remarkX}
  {\popQED\endremarkX}

\newenvironment{assumption}
  {\pushQED{\qed}\assumptionX}
  {\popQED\endassumptionX}
\crefname{assumptionX}{assumption}{assumptions}  
\Crefname{assumptionX}{Assumption}{Assumptions}

\newcommand{\absval}[1]{\lvert #1 \rvert}
\newcommand{\innerprod}[2]{\langle #1 , #2 \rangle}
\newcommand{\norm}[1]{\lVert #1 \rVert}

\newcommand{\Absval}[1]{\left\vert #1 \right\vert}

\newcommand{\rat}{\,\mathfrak R}
\newcommand{\ratio}[3]{\tooltip****[black]{\ensuremath{\rat(#1,#2,#3)}}{$\frac{\mu(B_{#3}(#1))  }{\mu(B_{#3}(#2))}$}}
\newcommand{\ratioy}[3]{\tooltip****[black]{\ensuremath{\rat^y(#1,#2,#3)}}{$\frac{\mu^y(B_{#3}(#1))  }{\mu^y(B_{#3}(#2))}$}}

\newcommand{\Ratio}[4]{\ensuremath{\rat_{#4}^{#3}(#1,#2)}}
\renewcommand{\ratio}[3]{\ensuremath{\rat_{\mu}^{#3}(#1,#2)}}
\renewcommand{\ratioy}[3]{\ensuremath{\rat_{\mu^{y}}^{#3}(#1,#2)}}
\numberwithin{equation}{section}
\numberwithin{figure}{section}
\numberwithin{table}{section}

\usepackage{adjustbox}
\usepackage{tikz}
\usetikzlibrary{bayesnet,er,trees,shapes.symbols,mindmap,arrows,decorations,shapes.misc,shapes.arrows,chains,matrix,positioning,scopes,decorations.pathmorphing,patterns}
\usepackage{tikz-cd} 
\newcommand\mlnode[1]{\fbox{\begin{tabular}{@{}c@{}}#1\end{tabular}}} 

\definecolor{darkgreen}{rgb}{0,0.5,0}

\hypersetup{ 	
pdfsubject = {},
pdftitle = {},
pdfauthor = {}
}

\author{Ilja Klebanov \and Philipp Wacker}
\title{Maximum a posteriori estimators in $\ell^p$ are well-defined for diagonal Gaussian priors.}

\begin{document} 
\maketitle

\begin{abstract}
We prove that maximum a posteriori estimators are well-defined for diagonal Gaussian priors $\mu$ on $\ell^p$ under common assumptions on the potential $\Phi$.
Further, we show connections to the Onsager--Machlup functional and provide a corrected and strongly simplified proof in the Hilbert space case $p=2$, previously established by \citet{dashti2013map,kretschmann2019nonparametric}.

These corrections do not generalize to the setting $1 \leq p < \infty$, which requires a novel convexification result for the difference between the Cameron--Martin norm and the $p$-norm.

\end{abstract}

\textbf{Key words}: inverse problems, maximum a posteriori estimator, Onsager--Machlup functional, small ball probabilities, sequence spaces, Gaussian measures

\textbf{AMS subject classification}: 62F15, 62F99, 60H99
\tableofcontents

\section{Introduction}
\label{section:Introduction}
Let $(X,\|\quark \|_{X})$, be a separable Banach space and $\mu$ a centred and non-degenerate Gaussian (prior) probability measure on $X$. We are motivated by the inverse problem of inferring the unknown parameter $u\in X$ via noisy measurements
\begin{equation}
y = G(u) + \epsilon \label{eq:invProb},
\end{equation}
where $G: X\to \mathbb R^d$ is a (possibly nonlinear) measurement operator and $\epsilon$ is measurement noise, typically assumed to be independent of $u$.
The Bayesian approach to solving such inverse problems \citep{stuart2010inverse} is to combine prior knowledge given by $\mu$ with the data-dependent likelihood into the posterior distribution $\mu^{y}$ given by
\begin{equation}
\label{eq:post_pr}
\frac{\d\mu^y}{\d\mu}(u) = Z^{-1}\cdot \exp(-\Phi(u)).
\end{equation}
Here, the so-called potential $\Phi: X\to \R$ depends on $G$ and the statistical structure of the measurement noise $\eps$, while $Z\coloneqq \int_{X} \exp(-\Phi(u)) \mu(\mathrm du)$ is simply the normalization constant, which is well defined under suitable conditions on $\Phi$ (see \Cref{ass:Phi} later on). If, for example, the measurement noise is distributed according to a centred Gaussian measure on $\mathbb R^d$, $\varepsilon \sim N(0, \Gamma)$ with symmetric and positive definite covariance matrix $\Gamma \in \mathbb R^{d\times d}$, then $\Phi(u) = \frac1 2 \|\Gamma^{-1/2}(y-G(u))\|^2$, but we will use general formulation (\Cref{eq:post_pr}) as the starting point for our considerations. For an overview of the Bayesian approach to inverse problems and a discussion of its well-posedness we refer to \citep{stuart2010inverse} and the references therein.

Our focus lies on the analysis of the so-called ``maximum a posteriori (MAP) estimator'' or ``mode'', i.e.\ the summary of the posterior $\mu^{y}$ in the form of a single point $u_{\textup{MAP}} \in X$.
In the finite-dimensional setting $X = \R^{k}$, if $\mu^{y}$ has a continuous Lebesgue density $\rho^{y}$, MAP estimators are simply defined as the parameter of highest posterior density, $u_{\textup{MAP}} = \argmax_{u\in\R^{k}} \rho^{y}(u)$ (note that such maximizers may not be unique or fail to exist).

Unfortunately, this definition does not generalize to measures without a continuous Lebesgue density, in particular it can not cover infinite-dimensional settings, where there is no equivalent of the Lebesgue measure.

For this reason \citet[Definition 3.1]{dashti2013map} suggested to define MAP estimators as ``maximizers of infinitesimally small ball (posterior) mass'', see \Cref{def:MAP} below.
To simplify notation, we first introduce the following shorthand for the ratios of ball masses:

\begin{notation}
For a separable metric space $X$ and a probability measure $\nu$ on $X$, we denote the open ball of radius $\delta > 0$ centred at $x\in X$ by $B_{\delta}(x)$.
Further, for $w,z \in X$ with $\nu(B_\delta(z)) > 0$, we set
\[
\Ratio{w}{z}{\delta}{\nu}
\coloneqq
\frac{\nu(B_\delta(w))}{\nu(B_\delta(z))},
\qquad
\Ratio{w}{\sup}{\delta}{\nu}
\coloneqq
\frac{\nu(B_\delta(w))}{\sup_{z\in X}\nu(B_\delta(z))}.
\]
Similarly, we set $\Ratio{\sup}{w}{\delta}{\nu} \coloneqq \Ratio{w}{\sup}{\delta}{\nu}^{-1}$ whenever $\nu(B_\delta(w)) \neq 0$.
\end{notation}

\begin{remark}
Note that $\sup_{z\in X}\nu(B_\delta(z)) > 0$ follows from the separability of $X$:
Assume that $(z_{n})_{n\in\N}$ is dense in $X$, $\delta > 0$ and $\nu(B_{\delta}(z_{n})) = 0$ for each $n\in\N$.
Then $\nu(X) \leq \sum_{n\in\N} \nu(B_{\delta}(z_{n})) = 0$ (since $X \subseteq \bigcup_{n\in\N} B_{\delta}(z_{n})$) and $\nu$ could not be a probability measure.
\end{remark}

We work with the following rather general definition of MAP estimators:

\begin{definition}[{\citealt[Definition 3.6]{ayanbayev2021gamma}}]
\label{def:MAP}
Let $X$ be a separable metric space and $\nu$ be a probability measure on $X$.
A \emph{strong mode} for $\nu$ is any $z\in X$ satisfying
\begin{equation}\label{eq:defMAP}
\lim_{\delta\searrow 0} \Ratio{z}{\sup}{\delta}{\nu} = 1.
\end{equation}
If $\nu = \mu^y$ is a Bayesian posterior measure given by \eqref{eq:post_pr}, then we call any strong mode a \emph{MAP estimator}.\qedhere
\end{definition}

Other sources, especially from the physics community, see e.g. \citep{durr1978onsager}, (informally) define the MAP estimator as the minimizer of the so-called Onsager--Machlup (OM) functional, which can be thought of as a generalization of the negative posterior log-density \citep{dashti2013map}:
\begin{definition}
Let $\mu$ be a Gaussian (prior) measure on a separable Banach space $X$ with Cameron--Martin space $(E,\absval{\quark}_{E})$ and $\Phi\colon X \to \R$ be such that $\exp(-\Phi)$ is $\mu$-integrable.
We define the \textit{Onsager-Machlup (OM) functional} $I\colon E\to \R$ corresponding to $\mu^{y}$ given by \eqref{eq:post_pr} by
\begin{equation}
    \label{eq:OM} I(u) := \Phi(u) + \frac{1}{2}|u|_E^2.\qedhere
\end{equation}
\end{definition}

The connection between between OM minimizers and MAP estimators is non-trivial in general separable Banach spaces.\footnote{Note that \citep[Theorem 3.2]{dashti2013map}, restated as \Cref{thm:OM} below, only gives partial answers, since only pairwise comparisons of points lying in $E$ are made,
while \citep[Proposition 4.1]{ayanbayev2021gamma} makes the connection between OM minimizers and weak modes (rather than strong modes, which correspond to MAP estimators) under different assumptions.}
Natural questions arising in this context are
\begin{itemize}
    \item whether (or under which conditions) MAP estimators exist and
    \item whether MAP estimators can equivalently be characterized as minimizers of the OM functional.
\end{itemize}

One fundamental ingredient, and the most direct reason why small-ball probabilities are related to the functional $I$, is the following theorem about the Onsager-Machlup functional:
\begin{theorem}[{\citealp[Theorem 3.2]{dashti2013map}}]
\label{thm:OM}
Let \Cref{ass:Phi} hold. Then for $z_1,z_2\in E$,
\[
\lim_{\delta\searrow 0} \ratioy{z_1}{z_2}{\delta}= \exp(I(z_2)-I(z_1)).\qedhere
\]
\end{theorem}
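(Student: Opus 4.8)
The plan is to reduce the posterior ratio to a purely prior-side quantity and then exploit the Cameron--Martin theorem. First I would use the density \eqref{eq:post_pr} to cancel the normalisation constant $Z$, writing
\[
\ratioy{z_1}{z_2}{\delta} = \frac{\int_{B_\delta(z_1)} \exp(-\Phi(v))\,\mu(\d v)}{\int_{B_\delta(z_2)} \exp(-\Phi(v))\,\mu(\d v)}.
\]
The continuity of $\Phi$ assumed in \Cref{ass:Phi} gives $\sup_{v \in B_\delta(z_i)} \absval{\Phi(v) - \Phi(z_i)} \to 0$ as $\delta \searrow 0$, so each integral equals $\exp(-\Phi(z_i))\,(1 + o(1))\,\mu(B_\delta(z_i))$. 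Factoring this out reduces the claim to the prior statement
\[
\lim_{\delta \searrow 0} \frac{\mu(B_\delta(z_1))}{\mu(B_\delta(z_2))} = \exp\bigl(\tfrac12 \absval{z_2}_E^2 - \tfrac12 \absval{z_1}_E^2\bigr),
\]
because combining the two factors then produces exactly $\exp(\Phi(z_2) - \Phi(z_1))\exp(\tfrac12\absval{z_2}_E^2 - \tfrac12\absval{z_1}_E^2) = \exp(I(z_2) - I(z_1))$.

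Second, I would attack the prior ratio through Cameron--Martin. For $z \in E$, translating $B_\delta(z)$ to the origin and inserting the Radon--Nikodym density of the shifted measure gives $\mu(B_\delta(z)) = \exp(-\tfrac12\absval{z}_E^2)\int_{B_\delta(0)} \exp(-\ell_z(u))\,\mu(\d u)$, where $\ell_z$ is the Paley--Wiener functional associated with $z$ (a centred Gaussian under $\mu$ with variance $\absval{z}_E^2$). Since $\mu$ is centred, hence symmetric, and $B_\delta(0)$ and $\ell_z$ are respectively even and odd under $u \mapsto -u$, this integral equals $J_z(\delta) := \int_{B_\delta(0)} \cosh(\ell_z(u))\,\mu(\d u)$. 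Hence
\[
\frac{\mu(B_\delta(z_1))}{\mu(B_\delta(z_2))} = \exp\bigl(\tfrac12\absval{z_2}_E^2 - \tfrac12\absval{z_1}_E^2\bigr)\cdot\frac{J_{z_1}(\delta)}{J_{z_2}(\delta)},
\]
and it remains to prove $J_z(\delta)/\mu(B_\delta(0)) \to 1$ for each fixed $z \in E$; equivalently, that the conditional average $\E[\cosh(\ell_z) \mid B_\delta(0)]$ tends to $\cosh(0) = 1$.

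Third --- and this is where the real difficulty lies --- I would establish this concentration statement. The lower bound $J_z(\delta) \ge \mu(B_\delta(0))$ is immediate from $\cosh \ge 1$. The matching upper bound is delicate precisely because $\ell_z$ is \emph{not} norm-continuous, so it need not be small on $B_\delta(0)$, and one cannot simply divide the crude estimate $\int_{B_\delta(0)}(\cosh\ell_z - 1)\,\d\mu \to 0$ by the fast-decaying small-ball mass $\mu(B_\delta(0))$. The quantitative claim I would prove is $\E[\ell_z^2 \mid B_\delta(0)] \to 0$, together with uniform integrability of $\cosh(\ell_z)$ under the conditional laws; combined with the vanishing conditional mean of $\ell_z$, this forces $\E[\cosh(\ell_z)\mid B_\delta(0)] \to 1$. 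To control $\E[\ell_z^2\mid B_\delta(0)]$ I would replace the sharp indicator of $B_\delta(0)$ by the Gaussian tilt $\exp(-\beta\norm{u}_X^2)$ with $\beta = \beta(\delta) \to \infty$, under which the coordinatewise ``water-filling'' variances $\lambda_n/(1 + \beta\lambda_n)$ render the second moment a dominated-convergence tail of the convergent series $\absval{z}_E^2 = \sum_n z_n^2/\lambda_n$; one then transfers the estimate back to the ball. I expect this transfer --- making rigorous the passage between the sharp small ball and its Gaussian surrogate --- to be the main obstacle, and the place where the non-degeneracy of $\mu$ and the finiteness of $\absval{z}_E$ are genuinely used. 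With the concentration lemma in hand, $J_{z_1}(\delta)/J_{z_2}(\delta) \to 1$, and the three displays combine to give $\lim_{\delta\searrow 0}\ratioy{z_1}{z_2}{\delta} = \exp(I(z_2) - I(z_1))$.
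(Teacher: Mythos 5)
You have the right skeleton, and the first two of your three steps are correct. Note first that the paper itself gives no proof of \Cref{thm:OM}: it is imported verbatim from \citet[Theorem 3.2]{dashti2013map}, so your proposal can only be compared with the standard proof of that result and with the fragments of it that do appear here. Your Step 1 (using the local Lipschitz continuity of $\Phi$, \Cref{ass:Phi}\ref{item:third_condition_Phi}, to reduce the posterior ratio to the prior ratio up to a factor $e^{\Phi(z_2)-\Phi(z_1)}(1+o(1))$) is exactly the standard reduction, and your Step 2 identity $\mu(B_\delta(z)) = e^{-\absval{z}_E^2/2}\int_{B_\delta(0)}\cosh(\hat z)\,\d\mu$ with $\hat z = R_\mu^{-1}z$ is precisely the computation \eqref{equ:proof_limsupballs_denominator} in the paper's \Cref{lem:limsupballs}; together with $\cosh\ge 1$ it gives the lower bound. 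You have also correctly located the entire difficulty in the upper bound $\limsup_{\delta\searrow 0}\E[\cosh(\hat z)\mid B_\delta(0)]\le 1$, and correctly explained why the crude unconditional estimate cannot simply be divided by the small-ball mass.

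The gap is in how you propose to close that upper bound, and it is a real one: the step you flag as ``the main obstacle'' is in fact the whole theorem. Replacing the indicator of $B_\delta(0)$ by the tilt $e^{-\beta\norm{u}_X^2}$ and ``transferring back'' is not a workable route --- there is no inequality in either direction between conditional moments given the sharp ball and moments under the tilted measure, calibrating $\beta(\delta)$ would itself require the small-ball asymptotics you are trying to avoid, and the coordinatewise water-filling variances only make sense when $\norm{\cdot}_X^2$ is a quadratic form, i.e.\ in the Hilbert case, whereas the theorem is stated for a general separable Banach space. Moreover the uniform integrability of $\cosh(\hat z)$ under the conditional laws is asserted but not supplied. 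The standard way to finish is more elementary and bypasses conditional second moments entirely: write $\hat z = \hat z_n + \hat r_n$ with $\hat z_n\in X^\star$ continuous and $\absval{r_n}_E\to 0$ (density of $X^\star$ in $X_\mu^\star$). Since $\cosh(a+b)\le e^{\absval{a}}\cosh(b)$ and $\sup_{u\in B_\delta(0)}\absval{\hat z_n(u)}\le \delta\norm{\hat z_n}_{X^\star}$,
\[
\int_{B_\delta(0)}\cosh(\hat z)\,\d\mu
\le
e^{\delta\norm{\hat z_n}_{X^\star}}\int_{B_\delta(0)}\cosh(\hat r_n)\,\d\mu
=
e^{\delta\norm{\hat z_n}_{X^\star}+\absval{r_n}_E^2/2}\,\mu(B_\delta(r_n))
\le
e^{\delta\norm{\hat z_n}_{X^\star}+\absval{r_n}_E^2/2}\,\mu(B_\delta(0)),
\]
using the Cameron--Martin identity once more and then Anderson's inequality (\Cref{thm:anderson2}); letting $\delta\searrow 0$ for fixed $n$ and then $n\to\infty$ yields the upper bound. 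If you replace your third step by this approximation argument, your outline becomes a complete proof.
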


However, \Cref{thm:OM} does not yield the full answer regarding the connection of MAP estimators and OM minimizers --- not only is it restricted to elements of the Cameron--Martin space $E$, also it only provides \emph{pairwise} comparisons of two points $z_{1},z_{2}\in E$, while MAP estimators require consideration of the ratio $\ratioy{z_1}{\sup}{\delta}$ and its limit as $\delta\searrow 0$.

\begin{remark}
Note that $I$ amounts to a Tikhonov-Phillips regularization of the misfit functional $\Phi$, so the results in this manuscript are also to be understood in
the context of regularized optimization.
\end{remark}

\citet{dashti2013map} discussed, for the first time, the existence of MAP estimators as well as their connection to minimizers of the OM functional, in the specific setting of a Bayesian inverse problem of type \eqref{eq:invProb}.
More precisely, they claim to prove the following statements for every separable Banach space $X$ under \Cref{ass:Phi} below \citep[Theorem 3.5]{dashti2013map}:
\begin{enumerate}[label=(\Roman*)]
    \item \label{item:first_statement_dashti} Let $z^{\delta} = \argmax_{z\in X} \mu^{y}(B_{\delta}(z))$.
    There exists a subsequence of $(z^{\delta})_{\delta > 0}$ that converges strongly in $X$ to some element $\overline{z}\in E$.
    \item \label{item:second_statement_dashti} The limit $\overline{z}$ is a MAP estimator of $\mu^{y}$ (this proves existence of such an object) and it is a minimizer of the OM functional.
\end{enumerate}

However, while the ideas of \citet{dashti2013map} are groundbreaking, their proof of the above statements, as well as the corrections provided by \citet{kretschmann2019nonparametric}, rely on techniques that hold in separable Hilbert spaces rather than separable Banach spaces, see \Cref{section:Mistakes_Dashti}.

Further, neither \citet{dashti2013map} nor \citet{kretschmann2019nonparametric} show the existence of the $\delta$-ball maximizers $z^{\delta}$ above, which are the central objects in their proofs.
It turns out that the existence of $z^{\delta}$ is a highly non-trivial issue and has recently been discussed by \citet{hefin}, who proved their existence for certain measures (including posteriors arising from non-degenerate Gaussian priors on $\ell^p$) and gave counterexamples for others.

Our approach relies on asymptotic maximizers in the following sense, which are guaranteed to exist by the definition of the supremum (in fact, even for arbitrary families $(\eps^\delta)_{\delta > 0}$ in $(0,1)$).

\begin{definition}\label{def:asymptoticmaximizer}
Let $X$ be a separable metric space and $\nu$ be a probability measure on $X$.
A family $(\zeta^\delta)_{\delta > 0}\subset X$ is called an \textit{asymptotic maximizing family (AMF)} for $\nu$, if there exists a family $(\eps^\delta)_{\delta > 0}$ in $(0,1)$ such that $\eps^\delta \searrow 0$ as $\delta\searrow 0$ and, for each $\delta > 0$,
\begin{equation}
\label{eq:asymptoticmaximizer}
    \Ratio{\zeta^\delta}{\sup}{\delta}{\nu} > 1-\eps^\delta.\qedhere
\end{equation}
\end{definition}

\begin{lemma}
\label{lem:MAP_AMF}
For any separable metric space $X$ and any probability measure $\nu$ on $X$, there exists an AMF for $\nu$.
Further, if $\bar{z}$ is a MAP estimator for $\nu$, then the constant family $(\bar{z})_{\delta > 0}$ forms an AMF for $\nu$.
\end{lemma}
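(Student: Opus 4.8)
The plan is to treat the two assertions separately, exploiting that the first only requires us to produce \emph{some} asymptotically maximizing family, whereas the second fixes a specific (constant) family and leaves us only the freedom to choose the tolerances $(\eps^\delta)_{\delta>0}$.

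For the existence statement I would fix the tolerances first and then extract the points. Choose any monotone family, say $\eps^\delta := \min\{\delta,\tfrac12\}\in(0,1)$, which clearly satisfies $\eps^\delta\searrow 0$ as $\delta\searrow 0$. For each fixed $\delta>0$ set $S_\delta := \sup_{z\in X}\nu(B_\delta(z))$; by separability of $X$ (the remark following the notation) we have $S_\delta>0$. Since $(1-\eps^\delta)S_\delta<S_\delta$ and $S_\delta$ is the least upper bound, the definition of the supremum yields a point $\zeta^\delta\in X$ with $\nu(B_\delta(\zeta^\delta))>(1-\eps^\delta)S_\delta$, i.e.\ $\Ratio{\zeta^\delta}{\sup}{\delta}{\nu}>1-\eps^\delta$. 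The family $(\zeta^\delta)_{\delta>0}$ is then an AMF by construction. No measurability or selection issue arises, since we only need a single witness $\zeta^\delta$ for each $\delta$.

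For the second statement, write $r_\delta := \Ratio{\bar z}{\sup}{\delta}{\nu}$. Because $\nu(B_\delta(\bar z))\le S_\delta$ we have $r_\delta\le 1$, and since $\bar z$ is a strong mode, $r_\delta\to 1$ as $\delta\searrow 0$; in particular $\nu(B_\delta(\bar z))>0$ for all small $\delta$ and hence, by monotonicity of $\delta\mapsto\nu(B_\delta(\bar z))$, for all $\delta>0$, so $r_\delta\in(0,1]$. Setting $f(\delta):=1-r_\delta\in[0,1)$ we have $f(\delta)\to 0$, and the task is precisely to exhibit a family $(\eps^\delta)_{\delta>0}$ in $(0,1)$ with $\eps^\delta\searrow 0$ and $\eps^\delta>f(\delta)$ for every $\delta$.

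The main obstacle is the bookkeeping needed to make $(\eps^\delta)$ genuinely monotone while staying inside $(0,1)$: the function $f$ need not be monotone, and a priori could approach $1$ for large $\delta$, which would be incompatible with a non-decreasing $(0,1)$-valued dominating family. I would remove this obstacle with two observations. First, on every bounded interval $(0,\delta]$ the ratio is bounded away from $0$: choosing $\delta_1\le\delta$ with $r_{\delta'}>\tfrac12$ for $\delta'\le\delta_1$ (possible as $r_{\delta'}\to 1$), monotonicity of the numerator together with $S_{\delta'}\le\nu(X)=1$ give $r_{\delta'}\ge\nu(B_{\delta_1}(\bar z))>0$ for $\delta'\in[\delta_1,\delta]$, so the envelope $h(\delta):=\sup_{0<\delta'\le\delta}f(\delta')$ satisfies $h(\delta)<1$ while remaining non-decreasing with $h(\delta)\to 0$ as $\delta\searrow 0$. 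Second, with this lower bound in hand I would build the tolerances as a step function dominating $f$ strictly: pick $1>\delta_1>\delta_2>\cdots\searrow 0$ such that $f(\delta')<\tfrac{1}{k+1}$ for all $\delta'\le\delta_k$, set $\eps^\delta:=\tfrac{1}{k+1}$ on each $(\delta_{k+1},\delta_k]$, and $\eps^\delta:=c$ for $\delta>\delta_1$, where $c\in[\tfrac12,1)$ is any constant exceeding $\sup_{\delta>\delta_1}f(\delta)$ (such $c$ exists, as that supremum is $<1$ by the first observation). This $\eps^\delta$ is non-decreasing in $\delta$, lies in $(0,1)$, tends to $0$, and satisfies $\eps^\delta>f(\delta)$, so $r_\delta>1-\eps^\delta$ for all $\delta$ and $(\bar z)_{\delta>0}$ is an AMF. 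If one reads ``$\eps^\delta\searrow 0$'' merely as $\eps^\delta\to 0$, the slicker choice $\eps^\delta:=\max\{1-r_\delta^2,\min(\delta,\tfrac12)\}$ already works and bypasses the envelope entirely. I expect the only genuinely nontrivial point to be the uniform lower bound on $r$ over bounded scales; everything else is routine.
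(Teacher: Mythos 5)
Your argument is correct and follows essentially the same route as the paper, whose proof is a one-line appeal to the definition of the supremum (for existence) and to \Cref{def:MAP,def:asymptoticmaximizer} (for the constant family). The additional work you do --- constructing a monotone tolerance family $(\eps^\delta)_{\delta>0}$ dominating $1-\Ratio{\bar z}{\sup}{\delta}{\nu}$ via the envelope and the uniform lower bound on the ratio over bounded scales --- is exactly the bookkeeping the paper leaves implicit in the ``$\eps^\delta\searrow 0$'' requirement, and it is carried out correctly.
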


\begin{proof}
This follows directly from the definition of the supremum (in fact, for \emph{any} family $(\eps^\delta)_{\delta > 0}$ a corresponding AMF can be found) and \Cref{def:MAP,def:asymptoticmaximizer}.
\end{proof}

The corresponding statements to \ref{item:first_statement_dashti}--\ref{item:second_statement_dashti} are given in \Cref{conj:main_statement}.
Note that we strengthened those statements by stating the equivalence of MAP estimators, minimizers of the OM functional and limit points of AMFs.
Especially the latter can not be expected for the $\delta$-ball maximizers $z^{\delta}$, even when they exist and are unique, since it is easy to construct MAP estimators that are not limit points of $(z^{\delta})_{\delta > 0}$ as $\delta \searrow 0$, even for continuous measures on $\R^{1}$.
Apart from their guaranteed existence, this is yet another advantage of working with AMFs $(\zeta^{\delta})_{\delta > 0}$ rather than with $(z^{\delta})_{\delta > 0}$.

\subsection{Why this paper is necessary}
\label{section:Mistakes_Dashti}

The contribution of this paper is twofold:
\begin{enumerate}
\item 
remedy the crucial shortcomings of previous work on the existence of MAP estimators mentioned above and listed in detail below, resulting in a corrected and strongly simplified proof of the existence of MAP estimators in the Hilbert space setting (\Cref{thm:main_hilbert}, proven in \Cref{section:Hilbert});
\item
generalize the corresponding result from Hilbert spaces to sequence spaces $X = \ell^{p}$, $1 \leq p < \infty$, of $p^{\textup{th}}$-power summable sequences and diagonal\footnote{By ``diagonal'' we mean that $\mu=\otimes_{k\in\N} \mathcal N(0,\sigma_k^2)$ has a diagonal covariance structure with respect to the canonical basis, while ``nondegenerate'' refers to the fact that the eigenvalues of the covariance operator are strictly positive, $\sigma_{k}^{2} > 0$ for $k\in\N$.
Note that Gaussian measures on separable Hilbert spaces can always be diagonalized in this sense by choosing an orthonormal eigenbasis of the covariance operator, see \Cref{notation:Hilbert_setting}, hence our results constitute a genuine generalization of the Hilbert space case.}
and nondegenerate Gaussian prior measures,  proven in \Cref{section:proof_lp}).
For this purpose, we develop a novel and non-trivial convexification argument for the difference between the Cameron--Martin norm $|\quark|_E$ and the ambient space norm $\|\quark\|_X$ in \Cref{prop:convexitybound_lp_all_p}.
\end{enumerate}

The shortcomings of previous work on the existence of MAP estimators include:
\begin{itemize}
    \item The crucial object in the proofs of \citep{dashti2013map}, \[z^{\delta} = \argmax_{z\in X} \mu^{y}(B_{\delta}(z)),\] is defined without a proof of its existence.
    This is a highly non-trivial issue which was not fixed by the corrections in  \citet{kretschmann2019nonparametric}.
    In \citep[Example 4.8]{hefin}, the authors construct a probability measure on a separable metric space without such $\delta$-ball maximizers $z^\delta$, but prove in \citep[Corollary 4.10]{hefin} that such maximizers exist for posteriors arising from non-degenerate Gaussian priors on $\ell^p$.    
    \item Specific Hilbert space properties are used in Banach spaces, in particular, the proof of \citep[Theorem 3.5]{dashti2013map} relies heavily on the existence of an orthogonal basis of the Cameron--Martin space which satisfies $\norm{x}_{X}^{2} = \sum_{n\in\N} x_{n}^{2}$ for $x\in X$, where $x_{n}$ are the coordinates of $x$ in that basis.
    \item While the defining property of a MAP estimator $z\in X$ is given by \[\lim_{\delta\searrow 0} \ratioy{z}{\sup}{\delta} = 1,\] the proof of \citep[Theorem 3.5]{dashti2013map} considers this limit only for a specific null sequence $(\delta_{m})_{m\in\N}$.
    This is hidden in their notation, where, for simplicity, they adopt the notation $(z^{\delta})_{\delta > 0}$ for subsequences --- a rather typical abuse of notation which is illegitimate in this specific case, since different null sequences $(\delta_{m})_{m \in \N}$ can yield different candidates for MAP estimators.
    \item While \citet[Lemma 3.9]{dashti2013map} is stated for $\bar{z} = 0$, it is later applied to more general $\bar{z}\in X$.
    In Banach spaces, validity of this substitution is equivalent to tacit assumption of the Radon--Riesz property, which only holds for a strict subset of separable Banach spaces (and excludes the paradigmatic case $X = \ell^1$).
    \item The proof of \citep[Corollary 3.10]{dashti2013map} relies on MAP estimators being limit points of $(z^{\delta})_{\delta > 0}$.
    However, only the reverse implication had been discussed, and, in fact, this implication is incorrect even when $z^{\delta}$, $\delta > 0$, is guaranteed to exist, as can be easily seen from the following simple example of a bimodal distribution on $\R^{1}$:
    Let $0 < \sigma < 1$ and $\mu^y$ have Lebesgue density
    $\rho^y(x) \propto \exp(-(x-1)^2/2)\chi_{\R^+} + \exp(-(x+1)^2/(2\cdot \sigma^2))\chi_{\R^-}$.
    Then $z^\delta = 1$ for all $\delta < \frac{1}{2}$, but both $x=\pm 1$ are true MAP estimators.
    For this purpose, we work with AMFs introduced in \Cref{def:asymptoticmaximizer}, the limit points of which we show to coincide with MAP estimators.
\end{itemize}

\Cref{conj:main_statement} in general separable Banach spaces and general Gaussian measures remains unsolved and is an extremely intricate issue.
The ``skeleton'' of our proofs is provided in \Cref{thm:main_general}, where the main steps are shown under suitable conditions (while proving those conditions in specific settings typically requires a lot of work).
This establishes a framework for proving \Cref{conj:main_statement} in other Banach spaces, thereby paving the road for future research on this topic.

\subsection{Related Work}
\label{sec:relwork}

The definition of strong modes by \cite{dashti2013map} has sparked a series of papers with variations on this concept, most notably generalized strong modes \cite{clason2019generalized}, weak modes \citep{helin2015maximum}. \citep{agapiou2018sparsity} studied the MAP estimator for Bayesian inversion with sparsity-promoting Besov priors. The connection between weak and strong modes was further explored in \cite{lie2018equivalence}, and \cite{ayanbayev2021gamma,ayanbayev2021gammab} discussed stability and convergence of global weak modes using $\Gamma$-convergence. Recently, \cite{hefin} presented a perspective on modes via order theory.

\subsection{Structure of this manuscript}
\Cref{section:Main_results} describes the common framework along which the well-definedness of MAP estimators can be proven in all cases considered (Hilbert space and $X=\ell^p$) and, possibly, further separable Banach spaces. 
\Cref{section:Hilbert} and \Cref{section:proof_lp} apply this framework in order to prove well-definedness of the MAP estimator in the Hilbert space  and $\ell^p$ case, respectively. 

\section{Existence of maximum-a-posteriori estimators}
\label{section:Main_results}
This section covers all the main results mentioned in the introduction. Throughout the paper, we will make the following general assumptions:
\begin{assumption}\label{ass:Phi}
Let $(X,\|\quark\|_X)$ be a separable Banach space, which we call the ambient space, and $\mu$ be a non-degenerate centred Gaussian (prior) probability measure on $X$.
Let $(E,\absval{\quark}_{E})$ denote the corresponding Cameron--Martin space and $\mu^{y}$ be the (posterior) probability measure on $X$ given by \eqref{eq:post_pr}, where the potential $\Phi \colon X \to \R$ satisfies the following conditions:
\begin{enumerate}[label=(\alph*)]
    \item\label{item:first_condition_Phi} $\Phi$ is globally bounded from below, i.e.\ there exists $M\in \R$ such that for all $u\in X$,
    \[\Phi(u)\geq M.\]
    \item\label{item:second_condition_Phi} $\Phi$ is locally bounded from above, i.e.\ for every $r>0$ there exists $K(r) > 0$ such that for all $u\in X$ with $\|u\|_X < r$ we have 
    \[\Phi(u) \leq K(r).\]
    \item\label{item:third_condition_Phi} $\Phi$ is locally Lipschitz continuous, i.e.\ for every $r > 0$ there exists $L(r) > 0$ such that for all $u_1,u_2\in X$ with $\|u_1\|_X,\|u_2\|_X \leq r$ we have
    \[|\Phi(u_1)-\Phi(u_2)|\leq L(r)\, \|u_1-u_2\|_X.\]
\end{enumerate}
Purely for convenience, we assume that $\Phi(0) = 0$. This can be easily achieved by subtracting $\Phi(0)$ from $\Phi$ and incorporating the resulting additional prefactor into the normalization constant $Z$ in \eqref{eq:post_pr}.
\end{assumption}

\begin{remark}
Conditions \ref{item:first_condition_Phi}--\ref{item:third_condition_Phi} are identical to \citep[Assumption 2.1]{dashti2013map}, except that \ref{item:first_condition_Phi} is slightly stronger: \citep{dashti2013map} initially assume the weaker inequality $\Phi(u)\geq M - \varepsilon \|u\|_{X}^{2}$ for every $\varepsilon > 0$, but also make the additional assumption of global boundedness from below (in the sense of (a)  in \Cref{ass:Phi}) in their main theorem 3.5. This assumption is usually not too restrictive as our condition \ref{item:first_condition_Phi} still covers most practical Bayesian inverse problems, since $\Phi$ is typically even non-negative (cf. introduction). 
Further, the non-degeneracy of $\mu$ together with the above conditions guarantees that the ratios $\Ratio{w}{z}{\delta}{\mu}$ and $\Ratio{w}{z}{\delta}{\mu^{y}}$ etc.\ are always well-defined.
Given the assumption $\Phi(0) = 0$, condition \ref{item:second_condition_Phi} is an implication of \ref{item:third_condition_Phi}, but we keep the conditions separated for didactical reasons and comparability to previous papers.
\end{remark}

First, let us restate the result in \citep[Theorem 3.5]{dashti2013map} as a conjecture, since their proof is only (partially, due to unclear existence of $\delta$-ball maximizing centers $z^\delta$) correct in Hilbert spaces and the Banach space version remains an open problem:

\begin{conjecture}\label{conj:main_statement}
Let \Cref{ass:Phi} hold. Then: 
\begin{enumerate}[label=(\alph*)]
\item The following statements are equivalent:
\begin{enumerate}[label=(\roman*)]
    \item $\bar z$ is an $X$-strong limit point as $\delta\to 0$ of some asymptotic maximizing family (AMF) for $\mu^{y}$.\footnote{I.e., there exists a sequence $(\delta_n)_{n\in \N}$ with $\delta_n\searrow 0$ such that $\|\zeta^{\delta_n}-\bar z\|_X \to 0$ as $n\to \infty$.}
    \item $\bar z\in E$ and $\bar z$ minimizes the OM functional.
    \item $\bar z$ is a MAP estimator.
\end{enumerate}
\item There exists at least one MAP estimator.
\qedhere
\end{enumerate}
\end{conjecture}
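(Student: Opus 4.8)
Since the statement is only resolved in the Hilbert and $\ell^p$ settings, I would prove it there along a common skeleton (\Cref{thm:main_general}) that reduces everything to one uniform small-ball estimate. Writing $I(z)=+\infty$ for $z\notin E$ and letting $\bar z\in E$ denote an OM minimizer, the crux is a bound
\begin{equation}\label{eq:masterplan}
\mu^y(B_\delta(z)) \le \mu^y(B_\delta(\bar z))\,\exp\!\bigl(-(I(z)-I(\bar z))+r(\delta)\bigr)\qquad\text{for all }z\in X,
\end{equation}
with a remainder $r(\delta)\to0$ as $\delta\searrow0$ that is \emph{uniform} in the center $z$; its pointwise lower counterpart is \Cref{thm:OM} with $z_2=\bar z$. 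I would first secure existence of $\bar z$ by the direct method: $\Phi\ge M$ from \Cref{ass:Phi}\ref{item:first_condition_Phi} makes $I$ coercive on $E$, and since $E\hookrightarrow X$ compactly for non-degenerate Gaussian $\mu$, a weak-$E$ minimizing sequence converges $X$-strongly, along which $\Phi$ is continuous by \ref{item:third_condition_Phi} while $|\cdot|_E^2$ is weakly lower semicontinuous; thus $I$ attains its infimum.

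Granting \eqref{eq:masterplan}, part (a) follows quickly. The implication (iii)$\Rightarrow$(i) is immediate from \Cref{lem:MAP_AMF}, a MAP estimator spawning the constant AMF $(\bar z)_{\delta>0}$. For (ii)$\Rightarrow$(iii), taking the supremum over $z$ in \eqref{eq:masterplan} and using $I(z)\ge I(\bar z)$ yields $\sup_{z}\mu^y(B_\delta(z))\le\mu^y(B_\delta(\bar z))\,e^{r(\delta)}$, so $\Ratio{\bar z}{\sup}{\delta}{\mu^y}\ge e^{-r(\delta)}\to1$; since this ratio is always $\le1$, it converges to $1$ and $\bar z$ is a MAP estimator. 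For (i)$\Rightarrow$(ii), let $(\zeta^\delta)$ be an AMF with $X$-strong limit $z^\ast=\lim_n\zeta^{\delta_n}$. The defining inequality \eqref{eq:asymptoticmaximizer} forces $\mu^y(B_\delta(\zeta^\delta))\ge(1-\eps^\delta)\,\mu^y(B_\delta(\bar z))$ for an OM minimizer $\bar z$, and combining this with \eqref{eq:masterplan} at $z=\zeta^\delta$ gives $I(\zeta^\delta)-I(\bar z)\le r(\delta)-\log(1-\eps^\delta)\to0$; hence $(\zeta^\delta)$ is a minimizing sequence for $I$. Coercivity then bounds $|\zeta^\delta|_E$, so a subsequence converges weakly in $E$, the compact embedding identifies this weak limit with $z^\ast\in E$, and weak lower semicontinuity gives $I(z^\ast)\le\liminf_n I(\zeta^{\delta_n})=\inf I$, so $z^\ast$ minimizes the OM functional. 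Part (b) is then a by-product: an AMF exists by \Cref{lem:MAP_AMF}, is a bounded minimizing sequence by the above, hence has an $X$-strongly convergent subsequence whose limit $z^\ast\in E$ minimizes $I$ and, by (ii)$\Rightarrow$(iii), is a MAP estimator.

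The entire argument hinges on \eqref{eq:masterplan}, which I expect to be the one genuinely hard step. Its lower bound and the case $z\in E$ are classical Cameron--Martin small-ball asymptotics, but the supremum in \Cref{def:MAP} ranges over all of $X$, so \eqref{eq:masterplan} must also control centers $z\notin E$, where no shift density exists. In $\ell^p$ the Cameron--Martin space is a weighted $\ell^2$ and the exponent governing ball-mass decay is essentially the difference $\tfrac12|z|_E^2-c\|z\|_X^p$, which is neither convex nor bounded below on $X$; taming it is exactly the role of the convexification result \Cref{prop:convexitybound_lp_all_p}, which replaces this difference by a convex minorant and so delivers the uniformity in $z$ required to push \eqref{eq:masterplan} through. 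In the Hilbert case $p=2$ the corresponding estimate is markedly cleaner, giving the simplified \Cref{thm:main_hilbert}, whereas for $1\le p<\infty$ the convexification is indispensable and forms the technical heart of the proof.
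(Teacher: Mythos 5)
There is a genuine gap: the master inequality on which your entire plan rests is false. First, for $z\notin E$ you set $I(z)=+\infty$, so the right-hand side of your proposed bound vanishes identically for every $\delta>0$, while $\mu^y(B_\delta(z))>0$ for every centre and every radius because $\mu$ is non-degenerate with full support; no remainder $r(\delta)$ can repair this. Second, even restricted to $z\in E$ the bound cannot hold with $r(\delta)$ uniform in $z$: already for $X=E=\R$, $\mu=\calN(0,1)$, $\Phi=0$ and $\bar z=0$ one computes
\[
\frac{\mu(B_\delta(z))}{\mu(B_\delta(0))}\,e^{I(z)-I(0)}
=
\frac{\int_{-\delta}^{\delta}e^{-zu-u^2/2}\,\d u}{\int_{-\delta}^{\delta}e^{-u^2/2}\,\d u}
\;\geq\;
c_\delta\,\frac{\sinh(z\delta)}{z}
\xrightarrow[z\to\infty]{}\infty
\]
for each fixed $\delta>0$ (with $c_\delta=e^{-\delta^2/2}/\delta$), so the uniform constant $e^{r(\delta)}$ you need would have to be infinite. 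The Onsager--Machlup asymptotics of \Cref{thm:OM} are intrinsically pointwise in the pair of centres; uniformity in the centre at fixed $\delta$ is exactly what fails, and it fails in the direction your argument needs (the upper bound), not the lower one.

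The paper never seeks such uniformity. It instead couples the centres to the radii: it first shows that along any AMF the \emph{prior} ball ratio satisfies $\Ratio{\zeta^\delta}{0}{\delta}{\mu}\geq K>0$, and then proves the vanishing conditions of \Cref{cond:main_conditions}, each asserting $\liminf_{m}\Ratio{x_m}{0}{\delta_m}{\mu}=0$ along a null sequence $(\delta_m)$ whenever $(x_m)$ is unbounded, has weak limit outside $E$, or converges weakly but not strongly. The resulting contradiction forces strong convergence of AMF subsequences to a point of $E$, and only \emph{then} is the pairwise \Cref{thm:OM} invoked --- between two points of $E$ --- to connect MAP estimators and OM minimizers (\Cref{thm:main_general}). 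Parts of your proposal do survive: the direct-method existence of an OM minimizer is correct (though the paper obtains the minimizer differently, as an AMF limit), the implication from MAP estimator to AMF limit point via \Cref{lem:MAP_AMF} is right, and your identification of \Cref{prop:convexitybound_lp_all_p} as the technical heart is accurate --- but in the paper that convexification feeds the coupled vanishing conditions via Anderson's inequality, not a uniform-in-$z$ Onsager--Machlup bound.
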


The main goal of this paper is to provide proofs of \Cref{conj:main_statement} in the special cases where
\begin{itemize}
\item
$X$ is a separable Hilbert space (\Cref{thm:main_hilbert}), where we correct and strongly simplify the proofs initially proposed by \citep{dashti2013map} and worked out in detail in the PhD thesis of \citet{kretschmann2019nonparametric}, or
\item
$X = \ell^{p}$ with $p\in [1,\infty)$ and $\mu=\otimes_{k\in\N} \mathcal N(0,\sigma_k^2)$ is a diagonal Gaussian measure on $X$ (\Cref{thm:main_lp}), which is an entirely new result.
\end{itemize}

\begin{restatable}{theorem}{mainthmhilbert}
\label{thm:main_hilbert}
Let \Cref{ass:Phi} hold. Then \Cref{conj:main_statement} holds for any separable Hilbert space $X=\cH$.
\end{restatable}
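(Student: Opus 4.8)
The plan is to reduce both the equivalences in part (a) and the existence statement in part (b) to two asymptotic small--ball estimates and then to assemble them using \Cref{thm:OM} and \Cref{lem:MAP_AMF}. All ball masses are normalized relative to the centred prior mass $\mu(B_\delta(0))$, which by Anderson's inequality is the maximal prior ball mass. Concretely, I would first prove a \emph{sharp lower bound} at Cameron--Martin points, $\liminf_{\delta\searrow0}\mu^{y}(B_\delta(z))/\mu(B_\delta(0))\ge Z^{-1}\exp(-I(z))$ for every $z\in E$, together with a matching \emph{upper--semicontinuous upper bound}: for every null sequence $\delta_n\searrow0$ and every $w_n\to w^*$ strongly in $\cH$,
\[
\limsup_{n\to\infty}\frac{\mu^{y}(B_{\delta_n}(w_n))}{\mu(B_{\delta_n}(0))}\le Z^{-1}\exp(-I(w^*)),
\]
with the convention $\exp(-I(w^*))=0$ when $w^*\notin E$. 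In parallel I would record that $I$ attains its minimum on $E$: its sublevel sets are bounded in $E$ (as $\Phi\ge M$), the embedding $E\hookrightarrow\cH$ is compact (the covariance is trace class), and $\Phi$ is $\cH$--continuous while $\absval{\quark}_E$ is weakly lower semicontinuous, so the direct method applies.

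The lower bound is the easy half: by the Cameron--Martin theorem $\mu(B_\delta(z))=\int_{B_\delta(0)}\exp(\innerprod{z}{h}_E-\tfrac12\absval{z}_E^2)\,\mu(\d h)$, the Paley--Wiener integrand has mean zero over the symmetric ball, so Jensen's inequality gives $\mu(B_\delta(z))\ge\exp(-\tfrac12\absval{z}_E^2)\mu(B_\delta(0))$; multiplying by $\exp(-\Phi(z)-L(r)\delta)$ from local Lipschitz continuity (\Cref{ass:Phi}) yields the claim. The upper bound is the heart of the proof and is where the Hilbert structure is indispensable. Splitting $\cH=P_N\cH\oplus Q_N\cH$ into the first $N$ eigendirections and the rest, I factorize $\mu=\mu_N\otimes\nu_N$ and, using the explicit finite--dimensional Gaussian density, bound the shifted first--block ball mass by $\exp(-\tfrac12\absval{P_Nw}_{E_N}^2+\absval{P_Nw}_{E_N}\,\delta/\sigma_N)$ times the centred one (the cross term coming from Cauchy--Schwarz in the finite--dimensional Cameron--Martin norm $\absval{\quark}_{E_N}$). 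Bounding the tail block by Anderson's inequality then gives, for every $N$,
\[
\frac{\mu(B_\delta(w))}{\mu(B_\delta(0))}\le\exp\Bigl(-\tfrac12\absval{P_Nw}_{E_N}^2+\absval{P_Nw}_{E_N}\,\delta/\sigma_N\Bigr).
\]
Letting first $\delta_n\searrow0$ (so the cross term vanishes and $P_Nw_n\to P_Nw^*$ in the finite block) and then $N\to\infty$ (so $\absval{P_Nw^*}_{E_N}^2\uparrow\absval{w^*}_E^2$, diverging precisely when $w^*\notin E$) produces the stated Gaussian upper bound; the potential is reinstated through local Lipschitz continuity on $B_{\delta_n}(w_n)$.

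With these estimates the assembly is short. For a minimizer $\bar z$ of $I$, the lower bound gives $\mu^{y}(B_\delta(\bar z))/\mu(B_\delta(0))\gtrsim Z^{-1}\exp(-\inf_E I)$, while the upper bound, applied to near--maximizers of $\mu^{y}(B_\delta(\quark))$, gives $\sup_{w}\mu^{y}(B_\delta(w))/\mu(B_\delta(0))\lesssim Z^{-1}\exp(-\inf_E I)$; since $\Ratio{\bar z}{\sup}{\delta}{\mu^{y}}\le 1$, these two bounds force it to $1$, so $\bar z$ is a MAP estimator. This proves existence (part (b)) and the implication (ii)$\Rightarrow$(iii). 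The implication (iii)$\Rightarrow$(i) is immediate from \Cref{lem:MAP_AMF}, since a MAP estimator is the limit of its own constant AMF. For (i)$\Rightarrow$(ii), let $\bar z$ be an $\cH$--limit point of an AMF $(\zeta^\delta)$ along $\delta_n\searrow0$; combining the AMF property with the lower bound at the minimizer yields $\mu^{y}(B_{\delta_n}(\zeta^{\delta_n}))/\mu(B_{\delta_n}(0))\gtrsim Z^{-1}\exp(-\inf_E I)>0$, and the upper--semicontinuous upper bound then forces $\bar z\in E$ (otherwise the limit would be $0$) with $I(\bar z)\le\inf_E I$, i.e.\ $\bar z$ minimizes $I$; weak lower semicontinuity of $\absval{\quark}_E$ (the Radon--Riesz/Hilbert structure) is what legitimizes this passage to the limit.

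The step I expect to be the main obstacle is making the supremum over \emph{all} of $\cH$ behave: the upper bound is applied to near--maximizers, so one must rule out that they escape to infinity in $\cH$, i.e.\ establish tightness of asymptotic maximizing families. This is delicate because $\cH$--mass can a priori leak into the low--variance tail directions, where the finite--dimensional estimate is blind. I would resolve it by a two--regime argument: a large $\cH$--norm either concentrates in finitely many coordinates---whence $\absval{P_Nw}_{E_N}\ge\norm{P_Nw}_{\cH}/\sigma_1$ is large and the finite--dimensional factor is exponentially small---or leaks into the tail $Q_N\cH$, whose eigenvalues $\sigma_k^2$ are summably small, so that the tail Gaussian assigns exponentially little mass to a ball centred far from the origin. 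Balancing the truncation level against the decay of $(\sigma_k)$ yields the required precompactness, and this is exactly the quantitative use of the Hilbert norm identity $\norm{x}_{\cH}^2=\sum_k x_k^2$ whose illegitimate use in the general Banach setting the paper set out to repair.
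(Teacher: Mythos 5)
Your proposal is correct, and it rests on exactly the same technical pillars as the paper's proof: extraction of a finite-dimensional Gaussian factor from the density followed by Anderson's inequality (your head-block bound is the paper's \Cref{lem:newlemma37_hilbert}, your tail-regime bound is \Cref{lem:bounds36_balls_dlsv} obtained there via \Cref{prop:normextraction}), the Cameron--Martin/Jensen lower bound $\mu(B_\delta(z))\geq e^{-\absval{z}_E^2/2}\mu(B_\delta(0))$ (inequality \eqref{equ:proof_limsupballs_denominator}), reflexivity for weak compactness, and the Radon--Riesz property to upgrade weak to strong convergence. What differs is the assembly. The paper runs everything through the shell \Cref{thm:main_general}: it verifies \Cref{cond:main_conditions} \ref{item:main_conditions_general_decay}--\ref{item:main_conditions_weak_not_strong}, extracts a strongly convergent subsequence of an AMF, shows its limit is a MAP estimator by a contradiction argument involving two null sequences and \Cref{thm:OM}, and only afterwards deduces that MAP estimators minimize $I$. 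You instead first produce an OM minimizer by the direct method and then sandwich $\Ratio{\bar z}{\sup}{\delta}{\mu^{y}}$ between the sharp two-sided bounds $Z^{-1}e^{-\inf_E I}$; this is arguably cleaner and makes the role of $I$ more transparent, but it forces you to carry the exact constants in both bounds and, as you correctly identify, to prove tightness of near-maximizers before the upper bound is applicable --- which is precisely where the tail estimate with rate $a_n=\sigma_n^{-2}\to\infty$ re-enters, so no work is actually saved there. One presentational caveat: the ball $B_\delta(w)$ is not a product of a head ball and a tail ball, so ``bound the first block, then bound the tail block by Anderson'' cannot be read literally; you should either condition via Fubini (for each tail coordinate $y$ the slice is a finite-dimensional ball of radius at most $\delta$ centred at $P_N w$, after which Anderson applies to the resulting shifted infinite-dimensional ball) or, as the paper does, pull the sup/inf of the density ratio $\d\mu/\d\nu$ over the full balls and apply Anderson to $\nu(B_\delta(w))/\nu(B_\delta(0))\leq 1$. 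With that repair the argument closes, and your logical cycle (ii)$\Rightarrow$(iii)$\Rightarrow$(i)$\Rightarrow$(ii) together with the direct-method existence of the minimizer does deliver both parts of \Cref{conj:main_statement}.
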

\begin{proof}
See \Cref{section:Hilbert}.
\end{proof}
\begin{restatable}{theorem}{mainthmabstract}
\label{thm:main_lp}
Let \Cref{ass:Phi} hold. Then \Cref{conj:main_statement} holds for $X = \ell^p$, $p\in[1,\infty)$, and any diagonal Gaussian (prior) measure $\mu=\otimes_k \mathcal N(0,\sigma_k^2)$ on $X$.
\end{restatable}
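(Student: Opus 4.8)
The plan is to obtain \Cref{thm:main_lp} as an instance of the abstract framework \Cref{thm:main_general}, by verifying its structural hypotheses for the concrete pair $X=\ell^p$, $\mu=\otimes_k\calN(0,\sigma_k^2)$. The framework reduces the three-way equivalence and the existence claim of \Cref{conj:main_statement} to a handful of analytic inputs: (i) an asymptotic small-ball comparison relating the mass ratios $\Ratio{z_1}{z_2}{\delta}{\mu^y}$ to the Onsager--Machlup functional $I(u)=\Phi(u)+\tfrac12|u|_E^2$ for centres in $E$, which is exactly \Cref{thm:OM}; (ii) a coercivity/compactness input forcing every AMF for $\mu^y$ to lie in $E$ with bounded Cameron--Martin norm for small $\delta$ and to admit an $\ell^p$-strongly convergent subsequence; and (iii) a uniform-in-$z$ upper bound on the small-ball mass that controls the supremum $\sup_{z\in\ell^p}\mu^y(B_\delta(z))$ over all centres, not just those in $E$. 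As preliminaries I would record the explicit Cameron--Martin space $E=\{x:\sum_k x_k^2/\sigma_k^2<\infty\}$ with $|x|_E^2=\sum_k x_k^2/\sigma_k^2$, and verify that the summability of $(\sigma_k^2)_k$ forced by $\mu(\ell^p)=1$ makes the embedding $E\hookrightarrow\ell^p$ not merely continuous (so $\|\cdot\|_{\ell^p}\le C|\cdot|_E$) but compact, as it is then a norm-limit of its finite-rank coordinate truncations.

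The heart of the matter is condition (iii), and this is precisely where \Cref{prop:convexitybound_lp_all_p} enters. Using \Cref{ass:Phi} I would first reduce posterior to prior ball mass via the bounds $M\le\Phi\le K(r)$ on $\{\|z\|_{\ell^p}<r\}$ and local Lipschitz continuity, so that $\mu^y(B_\delta(z))$ is comparable to $\mu(B_\delta(z))$ up to controlled exponential factors. Since the prior is a product measure while the $\ell^p$-ball does not split as a product, the Cameron--Martin shift cost $\tfrac12|z|_E^2$ must be weighed against the $\ell^p$-geometry of the balls: the naive balance of the quadratic gain $\tfrac18|z_1-z_2|_E^2$ from the parallelogram identity in $E$ against the linear Lipschitz cost $L\,\|z_1-z_2\|_{\ell^p}$ is too weak, because $E\hookrightarrow\ell^p$ only gives a linear comparison and the linear term then dominates the quadratic one for small displacements. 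The role of \Cref{prop:convexitybound_lp_all_p} is to supply the missing structure by convexifying the difference between (a multiple of) the Cameron--Martin norm and the ambient $\ell^p$-norm, turning these competing estimates into a genuine uniform bound of the shape $\mu^y(B_\delta(z))\lesssim\mu^y(B_\delta(0))\,\exp(-I(z))$ valid for every $z\in\ell^p$ (recall $I(0)=\Phi(0)=0$), with a remainder vanishing as $\delta\searrow 0$.

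Granting (i)--(iii), the argument closes along the skeleton of \Cref{thm:main_general}. The uniform bound shows that a center with large (or infinite) $|z|_E$ has small ball mass, so any AMF $(\zeta^\delta)$ must satisfy $\zeta^\delta\in E$ with $\limsup_{\delta}|\zeta^\delta|_E<\infty$; reflexivity of the Hilbert space $E$ then yields a weakly convergent subsequence $\zeta^{\delta_n}\rightharpoonup\bar z$ in $E$, and compactness of $E\hookrightarrow\ell^p$ upgrades this to strong $\ell^p$-convergence, producing a limit $\bar z\in E$ as required in statement (i). Weak lower semicontinuity of $|\cdot|_E^2$ together with $\ell^p$-continuity of $\Phi$ identifies $\bar z$ as a minimizer of $I$, giving (i)$\Rightarrow$(ii). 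For (ii)$\Rightarrow$(iii), the uniform upper bound dominates $\sup_z\mu^y(B_\delta(z))$ by $\mu^y(B_\delta(0))\exp(-\inf I)(1+o(1))$, while the Cameron--Martin lower bound and \Cref{thm:OM} give $\mu^y(B_\delta(\bar z))\ge\mu^y(B_\delta(0))\exp(-I(\bar z))(1-o(1))$, so $\Ratio{\bar z}{\sup}{\delta}{\mu^y}\to 1$; finally (iii)$\Rightarrow$(i) is immediate from \Cref{lem:MAP_AMF}. Existence (statement (b)) then follows because $I$ is coercive and weakly lower semicontinuous on $E$, hence attains its infimum, which by the equivalence is a MAP estimator.

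I expect the main obstacle to be establishing \Cref{prop:convexitybound_lp_all_p} and feeding it into the uniform small-ball estimate of condition (iii). The difficulty is structural and most acute at $p=1$, where $\ell^1$ fails the Radon--Riesz property; this is exactly the implicit assumption behind the illegitimate recentring of \citep[Lemma 3.9]{dashti2013map} flagged in \Cref{section:Mistakes_Dashti}, and the convexification is what replaces that recentring by a Radon--Riesz-free route valid uniformly over all centres $z\in\ell^p$. Making the estimate simultaneously uniform in $z$ across the whole ambient space and quantitative enough that its error vanishes as $\delta\searrow 0$ is the technical crux; once it is in place, everything else reduces to the abstract framework together with standard Gaussian small-ball and Cameron--Martin arguments.
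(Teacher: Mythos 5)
Your overall skeleton---reduce to \Cref{thm:main_general} and identify \Cref{prop:convexitybound_lp_all_p} as the crux---matches the paper, but the analytic inputs you propose to verify are not the ones the convexification actually delivers, and two of them are genuinely flawed. First, your condition (iii), a bound $\mu^y(B_\delta(z))\lesssim \mu^y(B_\delta(0))\exp(-I(z))$ \emph{uniformly over all} $z\in\ell^p$ with a remainder vanishing as $\delta\searrow 0$, is essentially equivalent to the theorem itself: combined with the matching lower bound at an OM minimizer it immediately yields $\sup_z\mu^y(B_\delta(z))\sim \mu^y(B_\delta(0))e^{-\inf I}$, which is the whole content of \Cref{conj:main_statement}(a)(iii). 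No such uniform bound is available from \Cref{prop:convexitybound_lp_all_p}: what that proposition (via \Cref{prop:bound_balls_lp_finite}) gives is a bound of the form $\exp\bigl(-c_{k,\gamma}\bigl((\norm{P_k z}_p-\delta)^\alpha-\gamma^\alpha S^\alpha-\delta^\alpha\bigr)\bigr)$, i.e.\ exponential decay in a \emph{tail $\ell^p$-norm} of $z$ with a constant that blows up as $k\to\infty$ --- it does not see $\absval{z}_E$ at all and cannot produce $e^{-I(z)}$. The paper instead proves four separate \emph{vanishing} conditions (\Cref{cond:main_conditions} \ref{item:main_conditions_general_decay}--\ref{item:main_conditions_weak_not_strong}), each tailored to one failure mode (unbounded AMF, no weak limit, weak limit outside $E$, weak-but-not-strong convergence), and derives a contradiction with the lower bound \eqref{equ:main_hilbert_mu_ball_inequalities}; the implication ``OM minimizer $\Rightarrow$ MAP'' is then obtained \emph{without} any uniform bound, by first establishing existence of a MAP estimator $\bar z$ and then invoking the pairwise comparison of \Cref{thm:OM} between $\bar z$ and the given minimizer.

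Second, your condition (ii) --- that every AMF must ``lie in $E$ with bounded Cameron--Martin norm for small $\delta$'' --- is false and cannot be forced: for fixed $\delta>0$ the ball mass $\mu^y(B_\delta(\quark))$ is continuous in the $\ell^p$-norm (by the Lipschitz estimates of \Cref{ass:Phi} and the Cameron--Martin formula), so one can perturb any near-maximizer by a small element of $X\setminus E$ and obtain an AMF entirely outside $E$. Consequently your compactness route (weak convergence in the Hilbert space $E$ plus compactness of $E\hookrightarrow\ell^p$) has no starting point. The paper's route is different: boundedness of the AMF in $X$ via \ref{item:main_conditions_general_decay}, a weakly convergent subsequence \emph{in $X$} via \ref{item:main_conditions_existence_weak_limit} (which for $p=1$, where $\ell^p$ is not reflexive, requires the separate equismallness-at-infinity argument of \Cref{cor:existence_weak_limit}), membership of the \emph{limit} in $E$ via \ref{item:main_conditions_outside_E}, and an upgrade from weak to strong $X$-convergence via \ref{item:main_conditions_weak_not_strong}. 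You correctly sense that the Radon--Riesz failure in $\ell^1$ is the obstruction, but the fix is not a recentred uniform estimate; it is the tail-norm bound of \Cref{prop:bound_balls_lp_finite} applied with a carefully chosen truncation index $k$ in each of the four scenarios.
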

\begin{proof}
See \Cref{section:proof_lp}.
\end{proof}

\subsection{Proof strategy}\label{sec:proofstrategy}

In order to prove \Cref{thm:main_hilbert,thm:main_lp}, we proceed along the following seven steps, where $(\zeta^{\delta})_{\delta > 0}$ is an arbitrary AMF for $\mu^{y}$ and $(\delta_{m})_{m\in\N}$ denotes an arbitrary null sequence.
This is a rather general approach and can be followed to prove \Cref{conj:main_statement} for further classes of Banach spaces.

\begin{enumerate}[label = (\roman*)]
\item
\label{item:proof_strategy_boundedness}
Show that $(\zeta^{\delta_{m}})_{m\in\N}$ is bounded.
\item 
\label{item:proof_strategy_subsequence}
Extract a weakly convergent subsequence of $(\zeta^{\delta_{m}})_{m\in\N}$, which, for simplicity, we denote by the same symbol, with weak limit $\bar{z} \in X$.
\item 
\label{item:proof_strategy_limit_in_E}
Prove that $\bar{z}$ lies in the Cameron--Martin space $E$.
\item
\label{item:proof_strategy_convergence_strong}
Show that the convergence is, in fact, strong: $\norm{\zeta^{\delta_{m}} - \bar{z}}_{X} \to 0$ as $m\to\infty$.
\item
\label{item:proof_strategy_MAP}
Infer that any limit point $\bar{z}$ of an AMF (not just the one obtained in \ref{item:proof_strategy_subsequence}--\ref{item:proof_strategy_convergence_strong})  is a MAP estimator, proving its existence.
\item
\label{item:proof_strategy_OM_minimizer}
Prove that any MAP estimator minimizes the OM functional and is a limit point of some AMF.
\item
\label{item:proof_strategy_OM_MAP}
Show that any OM minimizer is also a MAP estimator.
\end{enumerate}

An illustration how this proof strategy fits within the context of \Cref{conj:main_statement} can be found in \Cref{fig:AMF_MAP_OM}.

\begin{figure}[H]
	\centering
		\begin{tikzcd}[column sep=3em,row sep=-0.3em]	
			\mlnode{limit point of some AMF}
			\arrow[r,Rightarrow,shift left = 0.5em,"\text{\ref{item:proof_strategy_MAP}}"]
			&
			\mlnode{MAP estimator}
			\arrow[r,Rightarrow,shift left = 0.5em,"\text{\ref{item:proof_strategy_OM_minimizer}}"]
			\arrow[l,Rightarrow,shift left = 0.5em,"\text{\ref{item:proof_strategy_OM_minimizer}}"]
			&
			\mlnode{OM minimizer}
			\arrow[l,Rightarrow,shift left = 0.5em,"\text{\ref{item:proof_strategy_OM_MAP}}"]
			\\
			\text{\textcolor{gray}{existence via \ref{item:proof_strategy_boundedness} -- \ref{item:proof_strategy_convergence_strong}}}
			& &
		\end{tikzcd}
	\caption{Strategy for proving the existence and equivalence of AMF limit points, MAP estimators and OM minimizers.}
	\label{fig:AMF_MAP_OM}
\end{figure}
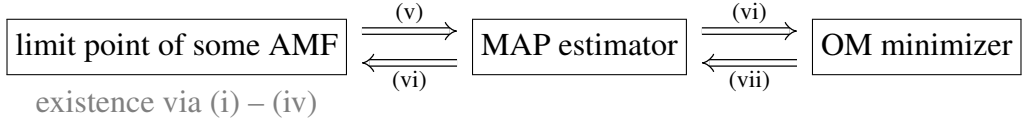

The proof of \ref{item:proof_strategy_boundedness}, \ref{item:proof_strategy_limit_in_E} and \ref{item:proof_strategy_convergence_strong} 
is highly non-trivial and relies on the following idea:
First, we prove that, under \Cref{ass:Phi}, the fraction $\Ratio{\zeta^{\delta_m}}{0}{\delta_{m}}{\mu}$ is bounded away from $0$, meaning that the $\zeta^{\delta_m}$ do not carry negligible prior ball mass in the asymptotic limit.
Second, we show for any sequence $(x_{m})_{m\in\N}$ in $X$ that, if either
\begin{itemize}
\item $(x_{m})_{m\in\N}$ is unbounded or
\item $x_{m} \rightharpoonup \bar{z}$ with $\bar{z}\notin E$ or
\item $x_{m} \rightharpoonup \bar{z} \in E$ but $\norm{x_{m} - \bar{z}}_{X} \not\to 0$,
\end{itemize}
then
\begin{equation*}
\liminf_{m \to \infty}\, \Ratio{x_m}{0}{\delta_{m}}{\mu} 
=
0,
\end{equation*}
providing a contradiction for $x_{m} = \zeta^{\delta_{m}}$.
The three properties described above, as well as \ref{item:proof_strategy_subsequence}, are formulated in \Cref{cond:main_conditions} \ref{item:main_conditions_general_decay}---\ref{item:main_conditions_weak_not_strong} and stated as assumptions in \Cref{thm:main_general}, which can therefore be seen as a ``shell theorem''.
Note that steps \ref{item:proof_strategy_MAP}, \ref{item:proof_strategy_OM_minimizer} and \ref{item:proof_strategy_OM_MAP} then follow in any separable Banach space.

Finally, we prove \Cref{cond:main_conditions} \ref{item:main_conditions_general_decay}---\ref{item:main_conditions_weak_not_strong} and finalize the proof of \Cref{conj:main_statement} in the two mentioned cases -- \Cref{section:Hilbert} covers the case where $X$ is a Hilbert space (\Cref{thm:main_hilbert}), while \Cref{section:proof_lp} considers $X = \ell^{p}$, $1\leq p < \infty$, and diagonal Gaussian measures (\Cref{thm:main_lp}).

\begin{remark}
Apart from  providing a ``skeleton'' for the proof of \Cref{conj:main_statement}, the strength of \Cref{thm:main_general} lies in its generality: It holds for any separable Banach space and thereby paves the way for future research.
Further, remarkably, while \Cref{cond:main_conditions} \ref{item:main_conditions_general_decay}---\ref{item:main_conditions_weak_not_strong} are stated in terms of the prior measure $\mu$, the conclusions are drawn for MAP estimators of $\mu^{y}$, with \Cref{ass:Phi} providing the sufficient conditions for comparability between prior and posterior in order to make this possible.
\end{remark}

\subsection{A framework for proving existence of MAP estimators}

While we use the proof strategy described above to prove \Cref{thm:main_hilbert,thm:main_lp}, it paves the way for further research.
Note that \Cref{thm:main_general} is applicable to any separable Banach space, so this approach can be followed to prove \Cref{conj:main_statement} for other classes of Banach spaces.

\begin{condition}
\label{cond:main_conditions}
Under \Cref{ass:Phi}, we introduce the following four conditions:

\begin{enumerate}[label = (C\arabic*)]
    \item
    \label{item:main_conditions_general_decay}
    \emph{(vanishing condition for unbounded sequences)} --
    For any null sequence $(\delta_{m})_{m \in \N}$ in $\R^{+}$ and unbounded sequence $(x_m)_{m\in\N}$ in $X$, \[\liminf_{m \to \infty} \Ratio{x_m}{0}{\delta_{m}}{\mu} = 0.\]
    
    \item
    \label{item:main_conditions_existence_weak_limit}
    \emph{(weakly convergent subsequence condition)} -- 
    If $(\delta_{m})_{m \in \N}$ is a null sequence in $\R^{+}$ and $(x_m)_{m\in\N}$ is a bounded sequence in $X$ such that there exists $K>0$ satisfying, for each $m\in\N$, $\Ratio{x_m}{0}{\delta_{m}}{\mu} \geq K$, then $(x_m)_{m\in\N}$ has a weakly convergent subsequence.

    \item 
    \label{item:main_conditions_outside_E}
    
    \emph{(vanishing condition for weak limits outside $E$)} --
    For any null sequence $(\delta_{m})_{m \in \N}$ in $\R^{+}$ and weakly convergent sequence $(x_{m})_{m \in \N}$ in $X$ with weak limit $\bar z \notin E$, $\liminf_{m \to \infty} \Ratio{x_m}{0}{\delta_{m}}{\mu} = 0$.
     \footnote{\label{footnote:Conditon_C3}This condition corresponds to \citep[Lemma 3.7]{dashti2013map} and \citep[Lemma 4.11]{kretschmann2019nonparametric}.
   While this 
   is sufficiently strong for our purposes, namely the proofs of the main \Cref{thm:main_hilbert,thm:main_lp}, we actually prove the stronger statement with $\limsup$ in place of $\liminf$ both for Hilbert spaces (\Cref{lem:newlemma37_hilbert}) as well as for $X = \ell^{p}$ (\Cref{lemma:C2_for_lp}).}

    \item
    \label{item:main_conditions_weak_not_strong}
    \emph{(vanishing condition for weakly, but not strongly convergent sequences)} -- 
    For any null sequence $(\delta_{m})_{m \in \N}$ in $\R^{+}$ and weakly, but not strongly convergent sequence $(x_{m})_{m \in \N}$ in $X$ with weak limit $\bar z \in E$, $\liminf_{m \to \infty} \Ratio{x_m}{0}{\delta_{m}}{\mu} = 0$.
    \footnote{This condition corresponds to \citep[Lemma 3.9]{dashti2013map} and \citep[Lemma 4.13]{kretschmann2019nonparametric}.}
\end{enumerate}
\end{condition}

\begin{theorem}
\label{thm:main_general}
Let \Cref{ass:Phi} hold and $(\zeta^{\delta})_{\delta >0}$ be any asymptotic maximizing family (AMF) in $X$.
Then there exist constants $K > 0$ and $\delta_0>0$, such that, for any $0 < \delta < \delta_0$,
\begin{equation}
\label{equ:main_hilbert_mu_ball_inequalities}
\Ratio{\zeta^\delta}{0}{\delta}{\mu}
\geq
K.
\end{equation}
It follows that:
\begin{enumerate}[label = (\alph*)]
    \item \label{item:main_general_convergence}
    If \cref{cond:main_conditions} \ref{item:main_conditions_general_decay} --\ref{item:main_conditions_weak_not_strong} hold, $(\zeta^{\delta})_{\delta >0}$ is an AMF in $X$ and $(\delta_{m})_{m\in\N}$ is a null sequence,
    then $(\zeta^{\delta_m})_{m\in\N}$ has a subsequence which converges strongly (in $X$) to an element $\bar w\in E$
    and any limit point $\bar{z}$ of $(\zeta^{\delta})_{\delta >0}$ lies in $E$ and is a MAP estimator for $\mu^{y}$. 
    
    \item
    \label{item:main_general_MAPOM}
    If \Cref{cond:main_conditions} \ref{item:main_conditions_outside_E} holds, then any MAP estimator for $\mu^{y}$ is an element of the Cameron--Martin space $E$, minimizes the OM functional and is a limit point of some AMF.
    
    \item
    \label{item:main_general_OMMAP}
    If \Cref{cond:main_conditions} \ref{item:main_conditions_outside_E} holds and $\mu^{y}$ has a MAP estimator $\bar{z}$, then any minimizer $\bar{x}\in E$ of the OM functional is also a MAP estimator.
\end{enumerate}
In particular, if \Cref{cond:main_conditions} \ref{item:main_conditions_general_decay} -- \ref{item:main_conditions_weak_not_strong} are satisfied, then \Cref{conj:main_statement} holds.
\end{theorem}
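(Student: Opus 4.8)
The plan is to first establish the uniform lower bound \eqref{equ:main_hilbert_mu_ball_inequalities}, which is the engine driving all of part \ref{item:main_general_convergence}, and then to feed it into \Cref{cond:main_conditions} one condition at a time. For the bound I would use only \Cref{ass:Phi} and the defining inequality of an AMF. Since $\Phi\geq M$ globally, $\mu^y(B_\delta(\zeta^\delta))\leq Z^{-1}e^{-M}\mu(B_\delta(\zeta^\delta))$; since $\Phi(0)=0$ and $\Phi$ is Lipschitz on the unit ball with constant $L(1)$, for $\delta\leq 1$ one has $\Phi\leq L(1)\delta$ on $B_\delta(0)$, hence $\mu^y(B_\delta(0))\geq Z^{-1}e^{-L(1)\delta}\mu(B_\delta(0))$. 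Combining these with $\mu^y(B_\delta(\zeta^\delta))>(1-\eps^\delta)\mu^y(B_\delta(0))$ and cancelling $Z^{-1}$ yields $\Ratio{\zeta^\delta}{0}{\delta}{\mu}>(1-\eps^\delta)e^{M-L(1)\delta}$, which is bounded below by $K:=\tfrac12 e^{M-L(1)}>0$ once $\delta_0\leq 1$ is small enough that $\eps^\delta<\tfrac12$. This step is short and robust.

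For part \ref{item:main_general_convergence}, fix a null sequence $(\delta_m)$; the bound gives $\Ratio{\zeta^{\delta_m}}{0}{\delta_m}{\mu}\geq K$ for large $m$. I would then run the four conditions as successive contradictions: \ref{item:main_conditions_general_decay} forces $(\zeta^{\delta_m})$ to be bounded, \ref{item:main_conditions_existence_weak_limit} extracts a weakly convergent subsequence with limit $\bar w$, \ref{item:main_conditions_outside_E} forces $\bar w\in E$, and \ref{item:main_conditions_weak_not_strong} upgrades the convergence to strong, since in each excluded case the relevant $\liminf$ of the ratio would be $0<K$. As any strong limit point $\bar z$ of $(\zeta^\delta)$ is the strong limit of some $\zeta^{\delta_n}$, the same argument applied to $(\delta_n)$ gives $\bar z\in E$. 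It then remains to show such a $\bar z$ is a strong mode. Here I would first prove $\bar z$ minimizes the OM functional: for any $w\in E$ the AMF inequality gives $\mu^y(B_{\delta_n}(\zeta^{\delta_n}))>(1-\eps^{\delta_n})\mu^y(B_{\delta_n}(w))$, and after replacing $\zeta^{\delta_n}$ by $\bar z$ in the numerator (the delicate point discussed below) and invoking \Cref{thm:OM} one obtains $\exp(I(w)-I(\bar z))\geq 1$, i.e.\ $I(\bar z)\leq I(w)$. Finally, using $\Ratio{\bar z}{\sup}{\delta}{\mu^y}\geq(1-\eps^\delta)\,\mu^y(B_\delta(\bar z))/\mu^y(B_\delta(\zeta^\delta))$ together with the fact that every subsequential $X$-limit of the near-maximizers $\zeta^\delta$ lies in $E$ with minimal $I$-value, I would conclude that this ratio tends to $1$, so $\bar z$ is a MAP estimator.

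Parts \ref{item:main_general_MAPOM} and \ref{item:main_general_OMMAP} are cleaner and use only \Cref{thm:OM}, \Cref{lem:MAP_AMF} and condition \ref{item:main_conditions_outside_E}. If $\bar z$ is a MAP estimator then the constant family $(\bar z)_{\delta>0}$ is an AMF by \Cref{lem:MAP_AMF}, so \eqref{equ:main_hilbert_mu_ball_inequalities} applies to it; condition \ref{item:main_conditions_outside_E} (with the constant sequence) then forces $\bar z\in E$, and the near-maximality of the fixed point $\bar z$ combined with \Cref{thm:OM} gives $I(\bar z)\leq I(w)$ for all $w\in E$, proving \ref{item:main_general_MAPOM} (the ``limit point of an AMF'' clause being immediate from the same constant AMF). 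For \ref{item:main_general_OMMAP}, given an existing MAP estimator $\bar z$ — which by \ref{item:main_general_MAPOM} lies in $E$ and minimizes $I$ — any other minimizer $\bar x\in E$ satisfies $I(\bar x)=I(\bar z)$, so \Cref{thm:OM} yields $\mu^y(B_\delta(\bar x))/\mu^y(B_\delta(\bar z))\to 1$; multiplying by $\Ratio{\bar z}{\sup}{\delta}{\mu^y}\to 1$ shows $\Ratio{\bar x}{\sup}{\delta}{\mu^y}\to 1$, i.e.\ $\bar x$ is a MAP estimator. Chaining \ref{item:main_general_convergence}--\ref{item:main_general_OMMAP} then gives all equivalences and the existence statement in \Cref{conj:main_statement}.

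The main obstacle I anticipate is precisely the comparison of ball masses at the moving centres used above: passing from $\mu^y(B_{\delta}(\zeta^{\delta}))$ to $\mu^y(B_{\delta}(\bar z))$ when $\zeta^{\delta}\to\bar z$ only strongly in $X$ and not in $E$. Via the local-Lipschitz estimate this reduces to the purely prior statement $\mu(B_\delta(\zeta^\delta))/\mu(B_\delta(\bar z))\to 1$, but the shift $\zeta^\delta-\bar z$, while $X$-small, need not be Cameron--Martin and need not be small relative to $\delta$, so neither the Cameron--Martin formula nor a crude triangle-inequality sandwich applies directly. I expect this comparison, rather than the bookkeeping around \Cref{thm:OM}, to absorb most of the effort, and to be exactly where the Hilbert-space versus $\ell^p$ structure enters when \ref{item:main_conditions_general_decay}--\ref{item:main_conditions_weak_not_strong} are later verified.
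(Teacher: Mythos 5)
Your derivation of the uniform lower bound \eqref{equ:main_hilbert_mu_ball_inequalities}, your use of \Cref{cond:main_conditions} \ref{item:main_conditions_general_decay}--\ref{item:main_conditions_weak_not_strong} as successive contradictions in part \ref{item:main_general_convergence}, and your arguments for parts \ref{item:main_general_MAPOM} and \ref{item:main_general_OMMAP} all match the paper's proof. However, there is a genuine gap exactly at the point you flag as ``the main obstacle'': the comparison of ball masses at the moving centres $\zeta^{\delta}$ versus the limit $\bar z$. You correctly reduce it (via the local Lipschitz estimate) to a statement about the prior, but you then leave it unproved, and both of your surrounding assessments are off. First, you do not need the two-sided limit $\mu(B_\delta(\zeta^\delta))/\mu(B_\delta(\bar z))\to 1$; the one-sided bound $\limsup_{\delta\searrow 0}\,\rat_\mu^\delta(x^\delta,\bar z)\leq 1$ for $x^\delta\rightharpoonup\bar z\in E$ suffices, since $\Ratio{\sup}{\bar z}{\delta}{\mu^y}\geq 1$ is automatic. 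Second, this is not where the Hilbert-versus-$\ell^p$ structure enters: the paper proves it once and for all in \emph{any} separable Banach space (\Cref{lem:limsupballs}), by shifting the ball $B_\delta(x^\delta)$ by $R_\mu\hat h$ for $\hat h\in X^\star$ via the Cameron--Martin formula, bounding the shifted mass by $\mu(B_\delta(0))$ with Anderson's inequality, using weak convergence only through the continuous functional $\hat h(x^\delta)\to\hat h(\bar z)$, and finally letting $R_\mu\hat h\to\bar z$ in $E$ (possible by density of $R_\mu X^\star$ in $E$). Your objection that ``the shift need not be Cameron--Martin'' is thus circumvented: one never shifts by $\zeta^\delta-\bar z$, only by elements of $E$ approximating $\bar z$ itself.

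A second, smaller gap is that even granting the moving-centre comparison, your final step for part \ref{item:main_general_convergence} only controls $\Ratio{\sup}{\bar z}{\delta_m}{\mu^y}$ along the particular null sequence $(\delta_m)$ for which $\zeta^{\delta_m}\to\bar z$, whereas \Cref{def:MAP} requires the limit over \emph{all} $\delta\searrow 0$. You gesture at the fix (``every subsequential limit of the near-maximizers lies in $E$ with minimal $I$-value''), which is indeed the paper's route: for an arbitrary null sequence $(\eps_m)$ one extracts a strong limit $\bar x\in E$ of $(\zeta^{\eps_m})$, obtains $\limsup_m \Ratio{\sup}{\bar x}{\eps_m}{\mu^y}\leq 1$ by the same comparison, and then uses \Cref{thm:OM} --- whose conclusion is a genuine limit, hence independent of the null sequence --- to transfer the bound from $\bar x$ back to $\bar z$ and reach a contradiction. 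This bookkeeping should be written out explicitly, since it is precisely the step that earlier proofs in the literature glossed over.
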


\begin{proof}
Due to \Cref{ass:Phi} \ref{item:third_condition_Phi} and \Cref{def:asymptoticmaximizer} there exists a family $(\eps^\delta)_{\delta > 0}$ such that $\eps^\delta \searrow 0$ for $\delta \searrow 0$, and, for any $0 < \delta \leq 1$,
\begin{equation}\label{eq:proofeq1}
    \begin{split}
        \mu^y(B_\delta(\zeta^\delta)) &> \frac{1-\eps^\delta}{Z}\cdot \sup_{z\in X} \int_{B_\delta(z)}e^{-\Phi(u)}\d\mu(u) \geq \frac{1-\eps^\delta}{Z}\cdot \int_{B_\delta(0)}e^{-\Phi(u)}\d\mu(u)\\
        &\geq  \frac{1-\eps^\delta}{Z}\cdot \int_{B_\delta(0)}e^{-L(1)}\d\mu(u)
        =
        \frac{1-\eps^\delta}{Z} e^{-L(1)}\mu(B_\delta(0)).
    \end{split}
\end{equation}
Furthermore, by \Cref{ass:Phi}\ref{item:first_condition_Phi}, for any $z\in X$ and $\delta > 0$,
\begin{equation}\label{eq:proofeq2}
    \begin{split}
        \mu^y(B_\delta(z)) &= \frac1Z \int_{B_\delta(z)}e^{-\Phi(u)}\d\mu(u) \leq  \frac{e^{-M}}{Z} \mu(B_\delta(z)).
    \end{split}
\end{equation}
Choosing $0 < \delta_{0} \leq 1$ such that $\eps^\delta < 1/2$ for each $0 < \delta < \delta_{0}$, and denoting $K \coloneqq e^{M-L(1)}/2$,
\begin{equation*}
\mu(B_\delta(\zeta^\delta)) 
\geq
Ze^M \mu^y(B_\delta(\zeta^\delta))
\geq
(1-\eps^\delta)\, e^{M-L(1)}\, \mu(B_\delta(0))
\geq
K \, \mu(B_\delta(0)),
\end{equation*}
proving \eqref{equ:main_hilbert_mu_ball_inequalities}.

\paragraph{Proving \ref{item:main_general_convergence}}
Consider the sequence $\zeta^{\delta_m}$ with $\delta_{m} \searrow 0$ as $m\to\infty$. Then
\begin{enumerate}[label = (\roman*)]
    \item
    \label{item:main_general_boundedness}
    \Cref{cond:main_conditions} \ref{item:main_conditions_general_decay} implies boundedness of $(\zeta^{\delta_m})_{m\in\N}$ in $X$,
     \item
    \label{item:main_general_limit}
    \Cref{cond:main_conditions} \ref{item:main_conditions_existence_weak_limit} implies that $(\zeta^{\delta_m})_{m\in\N}$ has a weakly (in $X$) convergent subsequence with weak limit point $\bar w \in X$.
    \item
    \label{item:main_general_limit_in_E}
    \Cref{cond:main_conditions} \ref{item:main_conditions_outside_E} implies that any weak (in $X$) limit point $\bar z \in X$ of $(\zeta^{\delta_{m}})_{m\in\N}$ lies in the Cameron--Martin space $E$.
    \item
    \label{item:main_general_weak_implies_strong}
    \Cref{cond:main_conditions} \ref{item:main_conditions_weak_not_strong} implies that any weak (in $X$) limit point $\bar z \in E$  of $(\zeta^{\delta_{m}})_{m\in\N}$ is also a strong (in $X$) limit point of $(\zeta^{\delta_{m}})_{m\in\N}$.
\end{enumerate}
In particular, there exists a subsequence of $(\zeta^{\delta_{m}})_{m\in\N}$ which converges strongly (in $X$) to some $\bar w\in E$.
This proves the first part of \ref{item:main_general_convergence}.

Now let $\bar z$ be any limit point of $(\zeta^{\delta})_{\delta >0}$ and $(\delta_{m})_{m\in\N}$ be such that $(\zeta^{\delta_{m}})_{m\in\N}$ converges (strongly) to $\bar z$.
Note that $\bar z\in E$ by \ref{item:main_general_limit_in_E}.
We set 
\[S := \max\{\|\bar z\|_{X}, \sup_{m\in\N} \|\zeta^{\delta_{m}}\|_{X}\}.\]
Using the local Lipschitz constant $L(r)$ for $\Phi$ on $B_r(0)$ (see \Cref{ass:Phi}\ref{item:third_condition_Phi}), we obtain, for any $m\in\N$,
\begin{align*}
    &\ratioy{\zeta^{\delta_m}}{\bar z}{\delta_m}
    =
    \exp(\Phi(\bar z)-\Phi(\zeta^{\delta_{m}}))\,  \frac{\int_{B_{\delta_{m}}(\zeta^{\delta_{m}})}e^{\Phi(\zeta^{\delta_{m}})-\Phi(u)}\, \d\mu(u)}
    {\int_{B_{\delta_{m}}(\bar z)}e^{\Phi(\bar z)-\Phi(u)}\, \d\mu(u)}
    \\
    &\quad\leq
    \exp\left(L(S)\cdot\|\zeta^{\delta_{m}} - \bar z\|_{X} + L(S+\delta_{m})\cdot \delta_{m} + L(S+\delta_{m})\cdot \delta_{m}\right)\ratio{\zeta^{\delta_m}}{\bar z}{\delta_m}
\end{align*}
Since $\zeta^{\delta_{m}} \to \bar z$ as $m\to\infty$, \Cref{lem:limsupballs} and \Cref{def:asymptoticmaximizer} of AMFs imply
\begin{align}
    &\limsup_{m\to\infty}\notag
    \, 
    \ratioy{\sup}{\bar z}{\delta_m}
    =
    \limsup_{m\to\infty}\, 
    \ratioy{\sup}{\zeta^{\delta_m}}{\delta_m}
    \ratioy{\zeta^{\delta_m}}{\bar z}{\delta_m}
    \\\notag
    &\quad\leq
    \limsup_{m\to\infty}\, 
    (1-\eps^{\delta_m})^{-1}
    \exp\left(L(S)\, \|\zeta^{\delta_{m}} - \bar z\|_{X} + 2L(S+\delta_{m})\cdot \delta_{m}\right) \ratio{\zeta^{\delta_m}}{\bar z}{\delta_m}
    \\
    &\quad\leq
    1.\label{equ:main_limsup_inequality}
\end{align}
If we can show that $\limsup_{\delta\searrow 0}\ratioy{\sup}{\bar z}{\delta}\leq 1$ 
(i.e.\ for any null sequence, not just for $(\delta_{m})_{m\in\N}$), then, since $\ratioy{\sup}{\bar z}{\delta} \geq 1 $
for each $\delta>0$, this implies that in fact $\lim_{\delta\searrow 0}\ratioy{\sup}{\bar z}{\delta} = 1$, 
proving that $\bar z$ is a MAP estimator and finalizing the proof.
For this purpose assume otherwise, i.e.\ there exists a null sequence $(\eps_{m})_{m\in\N}$ such that $\limsup_{m\to\infty} \, \ratioy{\sup}{\bar z}{\eps_m}
> 1$.

With the same argumentation as in \ref{item:main_general_boundedness}--\ref{item:main_general_weak_implies_strong}, there exists a subsequence of $(\zeta^{\eps_{m}})_{m\in\N}$, which, for simplicity, we denote by the same symbol, that converges strongly to some element $\bar{x} \in E$.
Similarly to \eqref{equ:main_limsup_inequality} we obtain
\begin{equation}
\label{equ:main_limsup_inequality_similar}
\limsup_{m\to\infty} \,\ratioy{\sup}{\bar x}{\eps_m} 
\leq 1.
\end{equation}
Now, since $\bar{x},\bar{z} \in E$, the property of the OM functional, \Cref{thm:OM}, guarantees the existence of the limit $\lim_{\delta\searrow 0} \ratioy{\bar x}{\bar z}{\delta}$ and therefore \eqref{equ:main_limsup_inequality} implies

\begin{equation}
\label{equ:limits_agree}
\lim_{m\to\infty}\ratioy{\bar x}{ \bar z}{\eps_m}
=
\lim_{\delta\searrow 0} \ratioy{\bar x}{\bar z}{\delta}
=
\lim_{m\to\infty} \ratioy{\bar x}{\bar z}{\delta_m}
\leq
\limsup_{m\to\infty} \ratioy{\sup}{\bar z}{\delta_m}
\leq 1.
\end{equation}
It follows from \eqref{equ:main_limsup_inequality_similar} and \eqref{equ:limits_agree} that
\[
1
<
\limsup_{m\to\infty} \, \ratioy{\sup}{\bar z}{\eps_m}
=
\limsup_{m\to\infty} \, \ratioy{\sup}{\bar x}{\eps_m}
\lim_{m\to\infty} \,\ratioy{\bar x}{\bar z}{\eps_m}
\leq
1,
\]
which is a contradiction, finalizing the proof.

\paragraph{Proving \ref{item:main_general_MAPOM}}
Now let $\bar z\in X$ be any MAP estimator (not necessarily the one obtained as the limit of $\zeta^{\delta_m}$).
Assuming $\bar z \notin E$ and considering the constant sequence $(\bar z)_{m\in\N}$ (clearly converging to $\bar{z}$), the vanishing condition for weak limits outside $E$, \Cref{cond:main_conditions} \ref{item:main_conditions_outside_E}, implies that 
\begin{align*}
    \liminf_{m\to\infty}\ratio{\bar z}{0}{\delta_m}=0
\end{align*}
for any null sequence $(\delta_{m})_{m\in\N}$.
Since the constant family $(\bar{z})_{\delta > 0}$ is an AMF for $\mu^{y}$ by \Cref{lem:MAP_AMF}, \eqref{equ:main_hilbert_mu_ball_inequalities} implies
\[
\liminf_{\delta\searrow 0}\ratio{\bar z}{0}{\delta}\geq K > 0.
\]

This contradiction proves $\bar{z}\in E$.
By definition of MAP estimators and \Cref{thm:OM}, it follows for any $z^\star \in E$ that
\begin{align*}
    1 
    =
    \lim_{\delta \searrow 0} \ratioy{\bar z}{\sup}{\delta}
    \leq  \lim_{\delta \searrow 0}\ratioy{\bar z}{z^\star}{\delta}= \exp(I(z^\star) - I(\bar z)).
\end{align*}
Hence, $I(z^\star) \geq I(\bar z)$ and $\bar{z}$ is a minimizer of the OM functional.
Finally, by \Cref{lem:MAP_AMF}, $\bar{z}$ is also a limit point of the constant AMF $(\bar{z})_{m\in\N}$.

\paragraph{Proving \ref{item:main_general_OMMAP}}
By \ref{item:main_general_MAPOM}, $\bar{z}\in E$ and minimizes the OM functional $I$, hence $I(\bar{z}) = I(\bar{x})$.
It follows from \Cref{thm:OM} that
\[
\lim_{\delta\searrow 0} \ratioy{\bar x}{\sup}{\delta}
=
\lim_{\delta\searrow 0} \ratioy{\bar x}{\bar z}{\delta}
\cdot
\lim_{\delta\searrow 0} \ratioy{\bar z}{\sup}{\delta}
=
\exp(I(\bar{z})-I(\bar{x})) \cdot 1
=
1,
\]
proving \ref{item:main_general_OMMAP}.

In summary, we have shown that each AMF (the existence of some AMF follows from \Cref{lem:MAP_AMF}) has a limit point $\bar{z}\in E$, which is a MAP estimator.
Furthermore, each limit point of an AMF lies in $E$ and is a MAP estimator.
In addition, any MAP estimator minimizes the OM functional and is a limit point of some AMF.
Finally, each minimizer of the OM functional is a MAP estimator.
Together, this proves \Cref{conj:main_statement}.
\end{proof}

\subsection{Some comments on the proof of \texorpdfstring{\Cref{cond:main_conditions} \ref{item:main_conditions_general_decay}---\ref{item:main_conditions_weak_not_strong}}{the conditions}}
The main obstacle in proving \Cref{thm:main_hilbert,thm:main_lp} is the verification of \Cref{cond:main_conditions} \ref{item:main_conditions_general_decay}---\ref{item:main_conditions_weak_not_strong}.
Let us shortly summarize one of the main ideas, demonstrated on the derivation of the vanishing condition for unbounded sequences \ref{item:main_conditions_general_decay} in the finite-dimensional setting $X = \R^{k}$, $k\in\N$:
Our aim is to show that, for any $\delta > 0$ the ratio $\ratio{x}{0}{\delta}$ decays to zero as $\norm{x}_{X} \to \infty$.
For this purpose we extract a certain prefactor from the integrals in the following way:
\begin{align*}
\ratio{x}{0}{\delta}
&=
\frac{\int_{B_{\delta}(x)} \exp\big(-\tfrac{1}{2}\absval{u}_{E}^{2}\big) \d u}
{\int_{B_{\delta}(0)} \exp\big(-\tfrac{1}{2}\absval{u}_{E}^{2}\big) \d u}
\\
&\leq
\frac{\sup_{v \in B_{\delta}(x)} \exp\big( -\tfrac{1}{2} L(v) \big)}
{\inf_{v \in B_{\delta}(0)} \exp\big( -\tfrac{1}{2} L(v) \big)}
\,
\frac{\int_{B_{\delta}(x)} \exp\big(-\tfrac{1}{2}(\absval{u}_{E}^{2} - L(u))\big) \d u}
{\int_{B_{\delta}(0)} \exp\big(-\tfrac{1}{2}(\absval{u}_{E}^{2} - L(u))\big) \d u}.
\end{align*}
If $L$ satisfies the following conditions,
\begin{enumerate}[label = (\roman*)]
\item
\label{item:L_condition_bounds}
there exists $\alpha > 0$ and $\kappa_{1},\kappa_{2} \geq 0$ such that, for each $v\in\R^{k}$,
\\
$\norm{v}_{X}^{\alpha} - \kappa_{1} \leq L(v) \leq \norm{v}_{X}^{\alpha} + \kappa_{2}$,
\item
\label{item:L_condition_nonnegative_convex}
$\absval{\quark}_{E}^{2} - L$ is non-negative and convex,
\end{enumerate}
then \ref{item:L_condition_nonnegative_convex} implies that, by Anderson's inequality, we can bound the remaining ratio of integrals from above by $1$, while \ref{item:L_condition_bounds} implies that, for any fixed $\delta >0$, the first fraction vanishes as $\norm{x}_{X} \to \infty$.

In separable Hilbert spaces $X = \cH$ a function $L$ satisfying \ref{item:L_condition_bounds}--\ref{item:L_condition_nonnegative_convex} is not hard to find (in both finite and infinite dimensions) since both $\norm{\quark}_{\cH}$ and $\absval{\quark}_{E}$ are quadratic.
In general separable Banach spaces the large discrepancy between the geometries induced by the norms $\norm{\quark}_{X}$ and $\absval{\quark}_{E}$ strongly complicates the search for such a function $L$, where convexity is particularly hard to ensure.
For $X = \ell^{p}$, the technical \Cref{prop:convexitybound_lp_all_p} guarantees the existence of such a function $L$.
This result together with \Cref{prop:bound_balls_lp_finite} can be seen as the crux to the results presented in this paper.

\section{The Hilbert space case\texorpdfstring{: Proof of \Cref{thm:main_hilbert}}{}}
\label{section:Hilbert}

In this section we treat the case where $X = \cH$ is a Hilbert space, i.e.\ we prove \Cref{thm:main_hilbert}.
These results have already been presented by \citet{dashti2013map}, with some corrections by \citet{kretschmann2019nonparametric}.
However, both of these manuscripts did not prove the existence of the central object in their proofs, namely the $\delta$-ball maximizing centers $z^\delta = \argmax_x \mu^y(B_\delta(x))$, which seems to be a highly nontrivial issue, see \citet{hefin}.
This section closes this theoretical gap by working with AMFs $\zeta^\delta$ defined by \Cref{def:asymptoticmaximizer} and serves two further purposes:

First, the Hilbert space case provides insight into the main ideas of the proof of \Cref{conj:main_statement} with fewer technicalities than the more general case $X = \ell^{p}$.
Second, we use a helpful statement from \citep{da2002second}, restated in \Cref{prop:normextraction} below, which simplifies the proofs considerably in comparison to \citep{dashti2013map,kretschmann2019nonparametric} and renders the proofs more streamlined.

\begin{notation}
\label{notation:Hilbert_setting}
Let $\cH$ be an infinite-dimensional separable Hilbert space and $\mu = \calN(0,Q)$ a centered and non-degenerate Gaussian measure on $\cH$.
As the covariance operator $Q$ of $\mu$ is a self-adjoint, positive, trace-class operator \citep{baker1973joint}, there exists an orthonormal eigenbasis $(e_k)_{k\in\N}$ of $Q$ in which $\mu = \otimes_{k\in\N} \calN(0,\sigma_k^2)$ is a product measure of one-dimensional Gaussian measures, where $Qe_k = \sigma_k^2 e_k$ and $\sigma_{k} > 0$ for each $k\in\N$ and $\sum_{k\in\N}\sigma_k^2 < \infty$.
We assume the eigenvalues to be decreasing, i.e.\ $\sigma_{1} \geq \sigma_{2} \geq \cdots$.
We write $D = \diag(d_1,d_2,\ldots) \coloneqq \sum_{k\in\N} d_{k}\, e_{k}\otimes e_{k}$ for any operator that is diagonal in the basis $(e_k)_{k\in\N}$.
Denoting $a_k \coloneqq \sigma_k^{-2}$ for $k \in \N$, the Cameron--Martin space of $\mu$ is given by
\begin{equation}
\label{equ:CM_Hilbert}
E = \{z\in \cH: ~|z|_E < \infty\},
\qquad
|z|_E^2 = \sum_{k=1}^\infty a_k \langle z,e_k\rangle_\cH^2,
\end{equation}
see \cite[Theorem 2.23]{da2014stochastic}.
Finally, we define the orthogonal projection operators $\Pi^k,\Pi_k\colon \cH \to \cH$, $k\in\N\cup \{ 0 \}$, by
\[
\Pi^k(x) := \sum_{j=1}^k \langle x, e_j\rangle_\cH \, e_j,
\qquad
\Pi_k(x) := x - \Pi^k(x).
\]
Note that $\Pi^{0} = 0$ and $\Pi_0 = \mathrm{Id}$.

\end{notation}

We start by reciting the following result which will allow us to ``extract an exponential rate'' by integrating over a slightly wider Gaussian measure:
\begin{proposition}[{\citealp[Proposition 1.3.11]{da2002second}}]\label{prop:normextraction}
If $\Gamma \colon \cH \to  \cH$ is self-adjoint and such that $Q^{1/2}\Gamma Q^{1/2}$ is trace class on $\cH$ and additionally $\langle x, Q^{1/2}\Gamma Q^{1/2}x \rangle_\cH < \|x\|_{\cH}^2$ for all $x\in\cH$.
Then for $\mu = \calN(0,Q)$ and $\nu = \calN(0, (Q^{-1}-\Gamma)^{-1})$ we have
\[
\frac{\d \mu}{\d\nu}(u) = \frac{\exp\left(-\frac 12 \langle \Gamma u , u \rangle_\cH\right)}{\sqrt{\det(I - Q^{1/2}\Gamma Q^{1/2})}}.
\]
\end{proposition}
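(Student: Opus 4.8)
The plan is to reduce the claim to a Gaussian integral in the eigenbasis of the trace-class operator $T := Q^{1/2}\Gamma Q^{1/2}$ and then pass to the infinite-dimensional limit via characteristic functionals. First I would record the consequences of the hypotheses on $T$: it is self-adjoint and trace class, and the strict bound $\langle x, Tx\rangle_{\cH} < \|x\|_{\cH}^2$ forces every eigenvalue $\lambda_k$ of $T$ to satisfy $\lambda_k < 1$; since $T$ is compact its largest eigenvalue is attained, so in fact $\sup_k \lambda_k < 1$ and $I - T$ is boundedly invertible. Writing $R := (Q^{-1} - \Gamma)^{-1} = Q^{1/2}(I-T)^{-1}Q^{1/2}$ exhibits $R$ as a positive operator with $\mathrm{tr}(R) = \mathrm{tr}((I-T)^{-1}Q) < \infty$, so $R$ is a bona fide trace-class covariance and $\nu = \calN(0,R)$ is a genuine centred Gaussian measure --- a point worth making explicit since the statement presupposes it.

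Second, I would give meaning to the quadratic form $\langle \Gamma u, u\rangle_{\cH}$, which cannot be read off pointwise because $\Gamma = Q^{-1/2}TQ^{-1/2}$ is typically unbounded. Diagonalising $T$ in an orthonormal eigenbasis $(g_k)$ and letting $\eta_k(u)$ denote the associated white-noise functionals (the $L^2(\mu)$-limits of $\langle u, Q^{-1/2}g_k\rangle_{\cH}$, which are i.i.d.\ $\calN(0,1)$ under $\mu$), the form is the $L^2(\mu)$-convergent series $F(u) := \sum_k \lambda_k \eta_k(u)^2$; convergence holds because $\mathrm{tr}(T) = \sum_k \lambda_k$ and $\sum_k \lambda_k^2$ are finite. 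The one-dimensional identity $\int_{\R} e^{\frac12\lambda \eta^2}\,\calN(0,1)(\d\eta) = (1-\lambda)^{-1/2}$ for $\lambda < 1$ then yields, for the truncations $F_n = \sum_{k\le n}\lambda_k\eta_k^2$, that $\int e^{F_n/2}\,\d\mu = \prod_{k\le n}(1-\lambda_k)^{-1/2}$. Because $\sup_k\lambda_k < 1$ leaves room for an integrability exponent $p>1$, the family $(e^{F_n/2})_n$ is uniformly integrable, so letting $n\to\infty$ gives $e^{F/2}\in L^1(\mu)$ and $\int e^{F/2}\,\d\mu = \det(I-T)^{-1/2}$, where $\det(I-T) = \prod_k(1-\lambda_k)$ converges in $(0,\infty)$ since $\sum_k|\lambda_k| < \infty$.

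Third, I would define the probability measure $\tilde\nu$ by $\d\tilde\nu = \det(I-T)^{1/2} e^{F/2}\,\d\mu$ and identify it with $\nu$ through its characteristic functional. For fixed $h\in\cH$, approximating $F$ by $F_n$ and computing the resulting finite-dimensional Gaussian integral (completing the square in $-\tfrac12\langle(Q^{-1}-\Gamma)u,u\rangle_{\cH} + i\langle u,h\rangle_{\cH} = -\tfrac12\langle R^{-1}u,u\rangle_{\cH} + i\langle u,h\rangle_{\cH}$) gives, after the $n\to\infty$ passage justified by the same uniform integrability, $\int e^{i\langle u,h\rangle_{\cH}}\,\d\tilde\nu(u) = \exp(-\tfrac12\langle Rh,h\rangle_{\cH})$. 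Since a Borel measure on a separable Hilbert space is determined by its characteristic functional, this identifies $\tilde\nu = \nu = \calN(0,R)$. Hence $\frac{\d\nu}{\d\mu}(u) = \det(I-T)^{1/2} e^{F(u)/2}$, and because this density is strictly positive and finite $\mu$-a.s., the two measures are equivalent and $\frac{\d\mu}{\d\nu}$ is its reciprocal, which is exactly the asserted formula with $\langle\Gamma u,u\rangle_{\cH} = F(u)$.

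The main obstacle is the rigorous treatment of the quadratic form together with the interchange of limits: $\langle\Gamma u,u\rangle_{\cH}$ exists only as an $L^2(\mu)$-object, so every determinant identity and Gaussian integral must first be established for the finite truncations and then transported to the limit. The bound $\sup_k\lambda_k<1$ (rather than merely $\lambda_k<1$) is precisely what furnishes the uniform integrability needed to justify these passages and to guarantee $e^{F/2}\in L^1(\mu)$; without it the normalising integral could diverge.
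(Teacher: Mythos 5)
The paper does not actually prove this proposition: it is quoted verbatim from \citet[Proposition 1.3.11]{da2002second}, and the only in-text support is the remark immediately following it, which checks the one-dimensional case. Your argument is therefore a self-contained reconstruction of the cited result rather than a variant of the paper's proof, and it is correct: the reduction to the eigenbasis of $T=Q^{1/2}\Gamma Q^{1/2}$, the observation that compactness upgrades $\lambda_k<1$ to $\sup_k\lambda_k<1$ (so that $I-T$ is boundedly invertible and $R=Q^{1/2}(I-T)^{-1}Q^{1/2}$ is a trace-class covariance), the interpretation of $\langle\Gamma u,u\rangle_{\cH}$ as the $\mu$-a.s.\ limit $F(u)=\sum_k\lambda_k\eta_k(u)^2$, and the uniform-integrability device (pick $p>1$ with $p\sup_k\lambda_k<1$) that justifies both $\int e^{F/2}\,\d\mu=\det(I-T)^{-1/2}$ and the limit passage in the characteristic functional are all sound; the one-dimensional identity you tensorise is precisely the computation in the paper's remark. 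The one step I would tighten is the characteristic-functional calculation: ``completing the square'' in a formal Lebesgue density of $\mu$ on $\cH$ is not literally meaningful, so instead expand $\langle u,h\rangle_{\cH}=\sum_k c_k\,\eta_k(u)$ with $c_k=\langle Q^{1/2}h,g_k\rangle$ and use independence of the $\eta_k$ under $\mu$ to factor $\int e^{i\langle u,h\rangle_{\cH}}e^{F/2}\,\d\mu$ into the one-dimensional integrals $\E\bigl[\exp(ic_k\eta+\tfrac12\lambda_k\eta^2)\bigr]=(1-\lambda_k)^{-1/2}\exp\bigl(-\tfrac{c_k^2}{2(1-\lambda_k)}\bigr)$, whose product assembles to $\det(I-T)^{-1/2}\exp(-\tfrac12\langle Rh,h\rangle_{\cH})$ --- exactly what your Vitali argument controls. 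With that cosmetic repair your write-up is a complete proof of the statement the paper only cites.
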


\begin{remark}
In one dimension this boils down to the following: Let $\sigma >0$ and $\mu=\calN(0,\sigma^2)$.
Then, for any $\gamma < \sigma^{-2}$,
\begin{align*}
    \mu(A) 
    &= \frac{1}{\sqrt{2\pi \sigma^2}}\int_A \exp\left(-\gamma^2\frac{x^2}{2}\right) \exp\left( -\frac{x^2}{2\left(\frac{\sigma^2}{1-\gamma^2\sigma^2}\right)}\right)\d x\\
    &= \int_A \frac{\exp\left(-\gamma^2\frac{x^2}{2}\right)}{\sqrt{1-\gamma^2\sigma^2}}\d \nu(x),
\end{align*}
where $\nu = \calN(0,\frac{\sigma^2}{1-\gamma^2\sigma^2}) = \calN(0,(\sigma^{-2}-\gamma^2)^{-1})$.
\end{remark}

Then we can re-prove the following lemma (as already stated in \citep{dashti2013map} and \citep{kretschmann2019nonparametric}):

\begin{lemma}[{\citealp[Lemma 3.6]{dashti2013map}}]
\label{lem:bounds36_balls_dlsv}
Let \Cref{ass:Phi} hold and $X = \cH$ be a separable Hilbert space.
Then, using \Cref{notation:Hilbert_setting}, for any $\delta > 0$ and $z\in\cH$, and $n\in \N$,

\[
\ratio{z}{0}{\delta}\leq \exp\left( -\frac{a_n}{2}\left[(\|\Pi_{n-1}z\|_{\cH}-\delta)^2- \delta^2 \right]\right). 
\]
\end{lemma}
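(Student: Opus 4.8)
The plan is to prove the inequality directly in $\cH$, with no finite-dimensional truncation, by combining \Cref{prop:normextraction} with Anderson's inequality. Write $R := \|\Pi_{n-1}z\|_{\cH}$. If $R \leq \delta$, then $(R-\delta)^2 - \delta^2 \leq 0$, so the right-hand side is $\geq 1$, whereas $\ratio{z}{0}{\delta} \leq 1$ by Anderson's inequality, since $B_\delta(z) = B_\delta(0) + z$ is a translate of the symmetric convex set $B_\delta(0)$. Hence the bound is trivial in this regime and I may assume $R > \delta$.

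The decisive idea is to extract the exponential rate through a weight with a \emph{small} coefficient. Fix $\gamma \in (0, a_n)$ and apply \Cref{prop:normextraction} with $\Gamma := \gamma\,\Pi_{n-1}$. This $\Gamma$ is self-adjoint and diagonal in the eigenbasis, $Q^{1/2}\Gamma Q^{1/2} = \gamma\,\diag(0,\ldots,0,\sigma_n^2,\sigma_{n+1}^2,\ldots)$ is trace class, and since $\gamma\sigma_k^2 = \gamma/a_k \leq \gamma/a_n < 1$ for $k \geq n$ the strict spectral bound $\langle x, Q^{1/2}\Gamma Q^{1/2}x\rangle_{\cH} \leq (\gamma/a_n)\|x\|_{\cH}^2 < \|x\|_{\cH}^2$ holds for $x \neq 0$, as required. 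The proposition then furnishes a wider centred Gaussian $\nu = \calN(0,(Q^{-1}-\Gamma)^{-1})$, whose tail variances $(a_k-\gamma)^{-1}$ exceed $\sigma_k^2$ while the first $n-1$ are unchanged, together with
\[
\frac{\d\mu}{\d\nu}(u) = C_\gamma^{-1}\exp\Big(-\tfrac{\gamma}{2}\,\|\Pi_{n-1}u\|_{\cH}^2\Big), \qquad C_\gamma := \sqrt{\det(I - Q^{1/2}\Gamma Q^{1/2})} \in (0,1].
\]

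I would then estimate numerator and denominator against $\nu$ using this same weight. On $B_\delta(z)$ the contraction property of $\Pi_{n-1}$ gives $\|\Pi_{n-1}u\|_{\cH} \geq R - \|u-z\|_{\cH} > R - \delta > 0$, hence $\mu(B_\delta(z)) \leq C_\gamma^{-1}e^{-\frac{\gamma}{2}(R-\delta)^2}\,\nu(B_\delta(z))$, and Anderson's inequality for the centred Gaussian $\nu$ yields $\nu(B_\delta(z)) \leq \nu(B_\delta(0))$. On $B_\delta(0)$ one has $\|\Pi_{n-1}u\|_{\cH} \leq \|u\|_{\cH} < \delta$, so $\mu(B_\delta(0)) \geq C_\gamma^{-1}e^{-\frac{\gamma}{2}\delta^2}\,\nu(B_\delta(0))$. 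Dividing, the constant $C_\gamma$ and the factor $\nu(B_\delta(0))$ cancel, leaving $\ratio{z}{0}{\delta} \leq \exp\big(-\tfrac{\gamma}{2}[(R-\delta)^2 - \delta^2]\big)$, and letting $\gamma \nearrow a_n$ gives the claim.

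The crux — and the reason a naive pointwise bound of the full density $e^{-\frac12|u|_E^2}$ on the ball fails — is precisely this choice of extraction coefficient. Lower-bounding $\sum_{k\geq n} a_k u_k^2$ on $B_\delta(z)$ is easy, but the matching \emph{lower} bound on the denominator requires the identical coefficient, and the genuine tail precisions $a_k$ are unbounded and cannot be controlled uniformly on $B_\delta(0)$. Passing through $\nu$ replaces all those precisions by the single small number $\gamma < a_n$ in the weight, so that the numerator upper bound and the denominator lower bound become simultaneously effective, while Anderson's inequality supplies the actual comparison $\nu(B_\delta(z)) \leq \nu(B_\delta(0))$. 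The only points demanding care are the strict spectral condition of \Cref{prop:normextraction} (which forces $\gamma < a_n$ and hence the harmless limit $\gamma \nearrow a_n$) and the validity of Anderson's inequality for the infinite-dimensional Gaussian $\nu$.
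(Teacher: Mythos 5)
Your proof is correct and follows essentially the same route as the paper's: the same choice $\Gamma=\gamma\,\Pi_{n-1}$ (i.e.\ constant weight $\gamma<a_n$ on coordinates $n,n+1,\dots$), the same change of measure via \Cref{prop:normextraction}, the same application of Anderson's inequality to the wider Gaussian $\nu$, and the same limit $\gamma\nearrow a_n$. Your explicit treatment of the trivial regime $\|\Pi_{n-1}z\|_{\cH}\leq\delta$ is a small extra care the paper glosses over, but it does not change the argument.
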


\begin{proof}
Using \Cref{notation:Hilbert_setting}, for arbitrary $n\geq n_0$, let $\Gamma = \diag(0,\ldots,0,r,\ldots,)$ with entries $0 < r < a_n$ starting at position $n$, such that $Q^{-1}-\Gamma = \diag(a_1,\ldots, a_{n-1},a_n-r, a_{n+1}-r,\ldots)$ is a valid precision (i.e. inverse covariance) operator of a Gaussian measure on $\cH$. This means that $\langle x, \Gamma x\rangle = r\|\Pi_{n-1} x\|_X^2$. This choice of $\Gamma$ fulfills the conditions of  \Cref{prop:normextraction}: First, $(Q^{-1}-\Gamma)^{-1}$ is a valid covariance operator:
\[
\sum_{i=n}^\infty (a_i-r)^{-1}
=
\sum_{i=n}^\infty \frac{a_{i}^{-1}}{1-r a_{i}^{-1}}
\leq
\frac{1}{1-r a_{n}^{-1}} \sum_{i=n}^\infty a_{i}^{-1}
=
\frac{a_{n}}{a_{n} - r} \sum_{i=n}^\infty \sigma_{i}^{2}
<
\infty.
\]
Second, since $Q$ is trace class \citep{baker1973joint}, so is
\[
Q^{1/2}\Gamma Q^{1/2} = \diag(0,\ldots,0,r\sigma_n^{2}, r\sigma_{n+1}^{2},\ldots).
\]
Finally, as $r < a_n = \sigma_n^{-2}$, and $\sigma_m^2 \leq \sigma_n^2$ for $m > n$, we also have that $r\sigma_m^2 \leq 1$ for all $m\geq n$, hence $\langle x,Q^{1/2}\Gamma Q^{1/2}  x\rangle \leq \|x\|_X^2$.

Thus, with $\nu = \calN(0, (Q^{-1}-\Gamma)^{-1})$, \Cref{prop:normextraction} implies for any $\delta > 0$:
\begin{align*}
\ratio{z}{0}{\delta}
&=
\frac{\int_{B_\delta(z)} e^{-\frac12\langle x, \Gamma x\rangle_{\cH}}\d\nu(x)}{\int_{B_\delta(0)} e^{-\frac12\langle x, \Gamma x\rangle_{\cH}}\d\nu(x)}
\\
&\leq
\frac{\exp(-\tfrac{r}{2}(\|\Pi_{{n-1}}z\|_{\cH}-\delta)^2)}{\exp(-\tfrac{r}{2}\delta^2)} \frac{\int_{B_\delta(z)} \d\nu(x)}{\int_{B_\delta(0)}\d\nu(x)}
\\
&\leq
\exp\left(-\frac{r}{2}\left[(\|\Pi_{n-1}z\|_{\cH}-\delta)^2-\delta^2\right]\right)
\end{align*}
due to Anderson's inequality (\Cref{thm:anderson2} with $\gamma = \nu$, $A=B_\delta(0)$ and $a=z$).
Since above inequality holds for any $0 < r < a_n$, it also holds for $r=a_{n}$ by continuity, and the claim follows.
\end{proof}

\begin{corollary}
Let \Cref{ass:Phi} hold and $X = \cH$ be a separable Hilbert space.
Then the vanishing condition for unbounded sequences, \Cref{cond:main_conditions} \ref{item:main_conditions_general_decay}, holds.
\end{corollary}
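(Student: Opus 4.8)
The plan is to read off the claim directly from \Cref{lem:bounds36_balls_dlsv}, whose bound already does essentially all the work. The crucial observation is that one is free to take $n = 1$ in that lemma: since $\Pi_{0} = \mathrm{Id}$ (see \Cref{notation:Hilbert_setting}), the estimate then controls the ratio purely through the \emph{full} ambient norm $\|z\|_{\cH}$, giving
\[
\ratio{z}{0}{\delta}
\leq
\exp\!\left(-\frac{a_{1}}{2}\bigl[(\|z\|_{\cH} - \delta)^{2} - \delta^{2}\bigr]\right)
=
\exp\!\left(-\frac{a_{1}}{2}\,\|z\|_{\cH}\bigl(\|z\|_{\cH} - 2\delta\bigr)\right),
\]
where $a_{1} = \sigma_{1}^{-2} > 0$ is a fixed positive constant independent of $z$ and $\delta$.

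Given an arbitrary null sequence $(\delta_{m})_{m\in\N}$ in $\R^{+}$ and an arbitrary unbounded sequence $(x_{m})_{m\in\N}$ in $\cH$, I would first extract a subsequence $(x_{m_{j}})_{j\in\N}$ with $\|x_{m_{j}}\|_{\cH} \to \infty$, which exists precisely because $(x_{m})_{m\in\N}$ is unbounded. Substituting $z = x_{m_{j}}$ and $\delta = \delta_{m_{j}}$ into the displayed bound and using $\delta_{m_{j}} \searrow 0$, the factor $\|x_{m_{j}}\|_{\cH} - 2\delta_{m_{j}}$ tends to $+\infty$, so the exponent tends to $-\infty$ and $\ratio{x_{m_{j}}}{0}{\delta_{m_{j}}} \to 0$. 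As the ratios are nonnegative, this forces $\liminf_{m\to\infty}\ratio{x_{m}}{0}{\delta_{m}} = 0$, which is exactly \Cref{cond:main_conditions} \ref{item:main_conditions_general_decay}.

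The point worth flagging is why no genuine difficulty arises, despite the fact that unbounded sequences can behave pathologically. A naive attempt would apply \Cref{lem:bounds36_balls_dlsv} with a large fixed $n$, but this fails: an unbounded sequence may have its mass escaping into ever higher coordinates (e.g.\ $x_{m} = R_{m} e_{k_{m}}$ with $R_{m}, k_{m} \to \infty$), so that $\|\Pi_{n-1} x_{m}\|_{\cH}$ remains bounded for every fixed $n$ and the bound degenerates to something useless. Choosing $n = 1$ sidesteps this entirely, since the tail projection $\Pi_{0}$ is the identity and therefore always registers the full norm. Consequently the only substantive ingredient is \Cref{lem:bounds36_balls_dlsv} itself, and the corollary follows immediately.
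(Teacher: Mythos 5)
Your proof is correct and follows essentially the same route as the paper: both apply \Cref{lem:bounds36_balls_dlsv} with $n=1$ (so that $\Pi_{0}=\mathrm{Id}$ and the bound involves the full norm $\|x\|_{\cH}$), pick indices along which $\delta_{m}$ is small and $\|x_{m}\|_{\cH}$ is large, and let the exponent diverge to $-\infty$. The only cosmetic difference is that the paper works with an explicit $\eps$--$M$ bookkeeping rather than passing to a subsequence with $\|x_{m_j}\|_{\cH}\to\infty$, but the content is identical.
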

\begin{proof}

Let $(\delta_{m})_{m \in \N}$ be a null sequence in $\R^{+}$ and $(x_m)_{m\in\N}$ be an unbounded sequence in $X$.
We have to prove that for any $\eps > 0$ and any $m\in \N$ there exists a $m^\star \geq m$ such that 
\[
   \ratio{x_{m^\star}}{0}{\delta_{m^\star}}
    \leq
    \eps.
    \]
    Indeed, for arbitrary $\eps > 0$ and $m\in \N$ there exists $M > 0$ such that  $\frac{a_1 M^2}{4} \geq \log \eps^{-1}$.
    Since $(\delta_{m})$ is a null sequence, there exists $m_1 \geq m$ such that for all $n\geq m_1$, $\delta_n < M/4$.
    By unboundedness of $(x_m)_m$ we can find a $m^\star\geq m_1\geq m$ such that $\|x_{m^\star}\|_\cH \geq M$.
    Then, by \Cref{lem:bounds36_balls_dlsv},
    \begin{align*}
       \ratio{x_{m^\star}}{0}{\delta_{m^\star}} &\leq \exp\left( -\frac{a_1}{2}\left[(\|x_{m^\star}\|_{\cH}-{\delta_{m^\star}})^2- \delta_{m^\star}^2 \right]\right)\\
        &\leq \exp\left(-\frac{a_1}{2} \left[\frac{9M^2}{16} - \frac{M^2}{16} \right]\right) = \exp(-\frac{a_1M^2}{4}) \leq \eps\qedhere
    \end{align*}
    
\end{proof}

Similarly we can shorten the proof of the following lemma:

\begin{corollary}[{\citet[Lemma 3.7]{dashti2013map}, \citet[Lemma 4.11]{kretschmann2019nonparametric}}]
\label{lem:newlemma37_hilbert}
Let \Cref{ass:Phi} hold and $X = \cH$ be a separable Hilbert space.
Then the vanishing condition for weak limits outside $E$, \Cref{cond:main_conditions} \ref{item:main_conditions_outside_E}, is satisfied.

\end{corollary}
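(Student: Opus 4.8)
The plan is to prove the stronger $\limsup$-version announced in footnote~\ref{footnote:Conditon_C3}: for any null sequence $(\delta_m)_{m\in\N}$ and any $x_m\rightharpoonup\bar z$ with $\bar z\notin E$, one has $\limsup_{m\to\infty}\ratio{x_m}{0}{\delta_m}=0$. The guiding observation is that, by \eqref{equ:CM_Hilbert}, $\bar z\notin E$ means $|\Pi^N\bar z|_E^2=\sum_{k=1}^N a_k\langle\bar z,e_k\rangle_\cH^2\to\infty$ as $N\to\infty$, so the obstruction lives in the \emph{finitely many leading} coordinates. This is precisely why \Cref{lem:bounds36_balls_dlsv} cannot be invoked directly: that estimate only controls the \emph{tail} $\|\Pi_{n-1}z\|_\cH$ weighted by $a_n$, and one can exhibit $\bar z\notin E$ for which $\sup_n a_n\|\Pi_{n-1}\bar z\|_\cH^2<\infty$ (e.g.\ $a_k=k^2$ and $\langle\bar z,e_k\rangle_\cH^2\sim k^{-3}$). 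Hence a new, ``head-based'' extraction is needed, and already the constant sequence $x_m=\bar z$ shows the result is genuinely about the full Cameron--Martin norm.

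First I would establish the key finite-dimensional bound: for every $N\in\N$ and $\delta>0$,
\begin{equation}\label{eq:headbound}
\ratio{z}{0}{\delta}
\leq
\exp\left(-\tfrac12\big(|\Pi^N z|_E-\sqrt{a_N}\,\delta\big)_+^2+\tfrac12 a_N\delta^2\right).
\end{equation}
To obtain this, apply \Cref{prop:normextraction} with the finitely supported diagonal operator $\Gamma=\diag(\gamma_1,\dots,\gamma_N,0,0,\dots)$ where $0<\gamma_k<a_k$; then $Q^{1/2}\Gamma Q^{1/2}$ is finite rank, hence trace class, and $\langle x,Q^{1/2}\Gamma Q^{1/2}x\rangle_\cH=\sum_{k\le N}\gamma_k\sigma_k^2\langle x,e_k\rangle_\cH^2\le\|x\|_\cH^2$, so the hypotheses hold. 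Writing $\nu=\calN(0,(Q^{-1}-\Gamma)^{-1})$, the determinant prefactor cancels in the ratio and
\[
\ratio{z}{0}{\delta}=\frac{\int_{B_\delta(z)}e^{-\frac12\sum_{k\le N}\gamma_k\langle u,e_k\rangle_\cH^2}\,\d\nu(u)}{\int_{B_\delta(0)}e^{-\frac12\sum_{k\le N}\gamma_k\langle u,e_k\rangle_\cH^2}\,\d\nu(u)}.
\]
On $B_\delta(z)$ the reverse triangle inequality in the weighted norm $(\sum_{k\le N}\gamma_k\,\langle\,\cdot\,,e_k\rangle_\cH^2)^{1/2}$, together with $\gamma_k\le a_N$, bounds the numerator's exponent; on $B_\delta(0)$ the same weight is bounded by $a_N\delta^2$; and Anderson's inequality (\Cref{thm:anderson2}) gives $\nu(B_\delta(z))\le\nu(B_\delta(0))$. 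Letting $\gamma_k\nearrow a_k$ so that $\sum_{k\le N}\gamma_k\langle z,e_k\rangle_\cH^2\to|\Pi^N z|_E^2$ yields \eqref{eq:headbound}.

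Finally I would run the limit argument with the parameters taken in the correct order. Fix $N$. Since $\Pi^N$ has finite rank and $x_m\rightharpoonup\bar z$, each leading coordinate converges, so $|\Pi^N x_m|_E\to|\Pi^N\bar z|_E$ as $m\to\infty$. As $N$ is held fixed while $\delta_m\to0$, the correction terms in \eqref{eq:headbound}, which carry the possibly large factor $\sqrt{a_N}$, vanish, whence $\limsup_{m\to\infty}\ratio{x_m}{0}{\delta_m}\le\exp(-\tfrac12|\Pi^N\bar z|_E^2)$. This bound holds for \emph{every} $N$, and since $\bar z\notin E$ forces $|\Pi^N\bar z|_E^2\to\infty$, letting $N\to\infty$ gives $\limsup_{m\to\infty}\ratio{x_m}{0}{\delta_m}=0$, which is \Cref{cond:main_conditions}\ref{item:main_conditions_outside_E} in its stronger form.

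The main obstacle is the ordering of the three limiting parameters $m$ (equivalently $\delta_m$), $\gamma_k\nearrow a_k$, and $N$: the extraction \eqref{eq:headbound} is useful only because the large weight $\sqrt{a_N}$ multiplying $\delta$ is harmless once $N$ is frozen \emph{before} $\delta_m\to0$, while sending $N\to\infty$ at the very end captures the divergence of the head Cameron--Martin norm. Producing a clean, $N$-uniform bound such as \eqref{eq:headbound} — in particular controlling the cross term generated by the finite-dimensional shift via Cauchy--Schwarz and disposing of the residual ball ratio by Anderson's inequality — is the technical heart of the argument, and the point where the tail estimate \Cref{lem:bounds36_balls_dlsv} must be replaced by a head estimate.
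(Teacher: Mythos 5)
Your proof is correct and follows essentially the same route as the paper's: extract the head Cameron--Martin norm $\absval{\Pi^{N}\quark}_{E}$ via \Cref{prop:normextraction} with a finitely supported diagonal $\Gamma$, dispose of the residual ball ratio by Anderson's inequality, and use that weak convergence gives componentwise convergence so the head norm of $x_m$ recovers that of $\bar z\notin E$. The only differences are cosmetic: the paper fixes $\Gamma=\diag(a_1/2,\dots,a_n/2,0,\dots)$ and chases explicit constants, whereas you let $\gamma_k\nearrow a_k$ to get the cleaner bound \eqref{eq:headbound} and then order the limits ($m\to\infty$ with $N$ frozen, then $N\to\infty$) -- a tidier but equivalent presentation.
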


\begin{proof}
We use \Cref{notation:Hilbert_setting} throughout the proof.

Let $(\delta_{m})_{m \in \N}$ be a null sequence in $\R^{+}$ and $(x_m)_{m\in\N}$ be a weakly convergent sequence with weak limit $\bar{z} \notin E$.

\[
\ratio{x_{m}}{0}{\delta_{m}}
\leq
\eps.
\]
Let $\varepsilon > 0$.
Since $\bar{z}\notin E$, $\absval{\Pi^{n}\bar{z}}_{E} \to \infty$ as $n\to\infty$ by \eqref{equ:CM_Hilbert},

hence there exists $n \in \N$ (which we fix from now on) such that
\begin{equation}
\label{equ:technical_inequalities_in_newlemma37_hilbert}
\absval{\Pi^{n}\bar{z}}_{E} \geq 4,
\qquad
\exp(- \tfrac{3}{64} \absval{\Pi^{n}\bar{z}}_{E}^{2}) < \varepsilon.
\end{equation}
Note that $\Gamma \coloneqq \diag (a_1/2, a_2/2, \ldots, a_n/2, 0,0, \ldots)$ is a valid choice for the operator $\Gamma$ in \Cref{prop:normextraction} and observe that
\begin{equation}
\label{equ:technical_identity_in_newlemma37_hilbert}
\langle x,\Gamma x\rangle_{\cH} = \tfrac{1}{2} \absval{\Pi^n x}_{E}^{2},
\qquad
x\in\cH.
\end{equation}
Since weak convergence $x_m \rightharpoonup \bar z$ implies componentwise convergence, there exists $m_1\in \N$ such that, for any $m \geq m_1$, $\absval{\Pi^{n}(\bar{z} - x_m)}_{E} \leq 1$.
Since $(\delta_{m})_{m\in\N}$ is a null sequence, there exists $m^\star\geq m_1$ such that, for each $m\geq m^\star$, $\delta_{m}^2\leq \sigma_n^2/n$.
It follows from \eqref{equ:technical_inequalities_in_newlemma37_hilbert} for any $m\geq m^\star$, any $z \in B_{\delta_m}(x_m)$ and any $w \in B_{\delta_m}(0)$, denoting $x_{m,j},z_j,w_j$ for the $j$-th component of $x_m,z,w$, that
\begin{enumerate}[label = (\roman*)]
    \item
    $\absval{\Pi^{n}(x_m-z)}_{E}^{2}
    =
    \sum_{j=1}^{n} \sigma_{j}^{-2} \absval{x_{m,j} - z_{j}}^{2}
    \leq
    \sum_{j=1}^{n} \sigma_{n}^{-2} \delta_{m}^{2}
    \leq
    \sum_{j=1}^{n} n^{-1}
    =
    1$;
    \item
    $\absval{\Pi^{n} z}_{E}
    \geq
    \tfrac{1}{2}\absval{\Pi^{n} \bar{z}}_{E} + 
    \underbrace{\tfrac{1}{2} \absval{\Pi^{n} \bar{z}}_{E}}_{\geq 2} - \underbrace{\absval{\Pi^{n}(\bar{z} - x_m)}_{E}}_{\leq 1} - \underbrace{\absval{\Pi^{n}(x_m-z)}_{E}}_{\leq 1}
    \geq
    \tfrac{1}{2}\absval{\Pi^{n} \bar{z}}_{E}$;
    \item
    $\absval{\Pi^{n} w}_{E}^2
    =
    \sum_{j=1}^{n} \sigma_{j}^{-2} \absval{w_{j}}^{2}
    \leq
    \sum_{j=1}^{n} \sigma_{n}^{-2} \delta_{m}^{2}
    \leq
    \sum_{j=1}^{n} n^{-1}
    =
    1
    \leq
    \tfrac{1}{16}\absval{\Pi^{n} \bar{z}}_{E}^2$.
\end{enumerate}
Using \eqref{equ:technical_identity_in_newlemma37_hilbert} and Anderson's inequality (\Cref{thm:anderson2}) applied to the Gaussian measure $\nu$ on $\cH$ as defined in \Cref{prop:normextraction}, this implies, for any $m\geq m^\star$,
\begin{align*}
\ratio{x_m}{0}{\delta_m}
 &=
 \frac{\int_{B_{\delta_m}(x_m)} \exp\big(-\frac12\langle z, \Gamma z\rangle_{\cH}\big)\d\nu(z)}
 {\int_{B_{\delta_m}(0)} \exp\big(-\frac12\langle w, \Gamma w\rangle_{\cH}\big)\d\nu(w)}
 \\
 &\leq
 \exp\bigg(
 \tfrac{1}{4} \sup_{w \in B_{\delta_m}(0)} \absval{\Pi^{n} w}_{E}^{2}
 -
 \tfrac{1}{4} \inf_{z \in B_{\delta_m}(x_m)} \absval{\Pi^{n} z}_{E}^{2}
 \bigg)
 \frac{\nu(B_{\delta_m}(x_m))}{\nu(B_{\delta_m}(0))}
 \\
 &\leq
 \exp\big(
 \tfrac{1}{64} \absval{\Pi^{n} \bar{z}}_{E}^{2}
 -
 \tfrac{1}{16} \absval{\Pi^{n} \bar{z}}_{E}^{2}
 \big)
 \\
 &=
 \exp\big(
 -\tfrac{3}{64} \absval{\Pi^{n} \bar{z}}_{E}^{2}
 \big)
 <
 \varepsilon,
\end{align*}
proving the claim.
\end{proof}

\begin{corollary}[{\citealp[Lemma 3.9]{dashti2013map} and \citealp[Lemma 4.13]{kretschmann2019nonparametric}}]\label{lem:newlemma39_hilbert}
Let \Cref{ass:Phi} hold and $X = \cH$ be a separable Hilbert space.
Then the vanishing condition for weakly, but not strongly convergent sequences, \Cref{cond:main_conditions} \ref{item:main_conditions_weak_not_strong}, is satisfied.\qedhere
\end{corollary}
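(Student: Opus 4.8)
The plan is to reduce the claim to the tail-decay estimate of \Cref{lem:bounds36_balls_dlsv} and to exploit that $a_k = \sigma_k^{-2} \to \infty$ (which holds since $\sum_k \sigma_k^2 < \infty$); I work throughout in \Cref{notation:Hilbert_setting}. First I would use that $(x_m)_{m\in\N}$ converges weakly but not strongly to $\bar z$, so that $\limsup_{m\to\infty}\norm{x_m - \bar z}_\cH =: 2c > 0$, and pass to a subsequence along which $\norm{x_m - \bar z}_\cH \geq 2c$ for all $m$. It suffices to establish $\liminf = 0$ along this subsequence, since the $\liminf$ of the full sequence is bounded above by that of any subsequence.

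The key idea is a diagonal choice of the truncation index $n = N+1$ in \Cref{lem:bounds36_balls_dlsv}, letting $N$ grow with the desired accuracy. Given $\eps > 0$, I would first fix $N$ so large that the fixed vector $\bar z$ has small tail, $\norm{\Pi_N \bar z}_\cH \leq c/2$, and simultaneously $\exp(-a_{N+1}c^2/4) < \eps$; the latter is possible precisely because $a_{N+1} \to \infty$. With $N$ fixed, finite-dimensional weak convergence is strong, so $\norm{\Pi^N(x_m - \bar z)}_\cH \to 0$; hence for all large $m$, using the Pythagorean identity $\norm{x_m-\bar z}_\cH^2 = \norm{\Pi^N(x_m-\bar z)}_\cH^2 + \norm{\Pi_N(x_m-\bar z)}_\cH^2$,
\[
\norm{\Pi_N x_m}_\cH \geq \norm{\Pi_N(x_m-\bar z)}_\cH - \norm{\Pi_N\bar z}_\cH \geq \sqrt{(2c)^2 - c^2} - \tfrac{c}{2} > c.
\]
Since $(\delta_m)_{m\in\N}$ is a null sequence, for large $m$ the bracket in \Cref{lem:bounds36_balls_dlsv} satisfies $(\norm{\Pi_N x_m}_\cH - \delta_m)^2 - \delta_m^2 = \norm{\Pi_N x_m}_\cH^2 - 2\delta_m\norm{\Pi_N x_m}_\cH \geq c^2/2$, so that lemma gives $\ratio{x_m}{0}{\delta_m} \leq \exp(-a_{N+1}c^2/4) < \eps$ for all sufficiently large $m$. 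As $\eps$ is arbitrary and such indices $m$ can be found beyond every threshold, this yields $\liminf_{m\to\infty}\ratio{x_m}{0}{\delta_m} = 0$.

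I expect the main obstacle to be the tension inside the choice of $N$: \Cref{lem:bounds36_balls_dlsv} rewards a large rate $a_{N+1}$, but the controlled tail mass $\norm{\Pi_N x_m}_\cH$ decays as $N$ grows for any fixed $m$, so one cannot simply send $N \to \infty$. The resolution is the order of quantifiers sketched above — fix $N$ (hence the exponential rate $a_{N+1}$) first and only then send $m \to \infty$, using that the non-vanishing discrepancy $\norm{x_m - \bar z}_\cH \geq 2c$ is asymptotically carried entirely by the tail block $\Pi_N$ (because the head block $\Pi^N(x_m-\bar z)$ converges strongly to $0$), which keeps $\norm{\Pi_N x_m}_\cH$ bounded below by $c$ along the relevant indices. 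Notably, this argument never uses $\bar z \in E$, only $\bar z \in \cH$; the hypothesis $\bar z \in E$ merely serves to delineate C4 from \Cref{cond:main_conditions} \ref{item:main_conditions_outside_E}.
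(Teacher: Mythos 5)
Your proposal is correct and follows essentially the same route as the paper's proof: both reduce the claim to \Cref{lem:bounds36_balls_dlsv}, fix the truncation index first (exploiting $a_n\to\infty$ to make the exponential rate as strong as needed), and then locate indices $m$ along which the tail norm $\|\Pi_n x_m\|_{\cH}$ stays bounded below by a fixed multiple of $c$; the only substantive difference is that you certify this lower bound via the Pythagorean identity applied to $x_m-\bar z$, whereas the paper argues via the Radon--Riesz property that $\limsup_m\|x_m\|_{\cH}>\|\bar z\|_{\cH}+c$ and a triangle inequality --- and your observation that $\bar z\in E$ is never actually needed (only $\bar z\in\cH$) applies equally to the paper's argument. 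One small repair: with $2c\coloneqq\limsup_m\|x_m-\bar z\|_{\cH}$ there need not exist a subsequence satisfying $\|x_m-\bar z\|_{\cH}\geq 2c$ (the norms may approach the $\limsup$ strictly from below), so you should instead fix any $c>0$ with $\limsup_m\|x_m-\bar z\|_{\cH}>2c$ and extract the subsequence for that strict threshold; all of your subsequent constants then go through unchanged.
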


\begin{proof}

We use \Cref{notation:Hilbert_setting} throughout the proof.
Let $(\delta_{m})_{m \in \N}$ be a null sequence in $\R^{+}$ and $(x_m)_{m\in\N}$ converge weakly, but not strongly to $\bar{z} \in E$.
We will show that, for any $\eps > 0$ and $m_{1}\in\N$, there exists $m^\star \geq m_{1}$ such
\[
\ratio{x_{m^\star}}{0}{\delta_{m^\star}}
\leq
\eps.
\]
Now let $\eps > 0$ and $m_{1}\in\N$.
Since weak convergence $x_m\rightharpoonup \bar z$ implies $\|\bar z\|_\cH\leq \liminf_{m\to\infty} \|x_m\|_\cH$ and as the convergence is not strong by assumption, the Radon--Riesz property guarantees the existence of $c>0$ such that
\begin{equation}
\label{equ:Radon_Riesz_implication}
\limsup_{m\to\infty} \|x_m\| > \|\bar z\|_\cH + c.
\end{equation}

(Otherwise, $\lim_{m\to\infty} \|x_m\| = \|\bar z\|_\cH$, in which case weak convergence implies strong convergence.) Since $a_{k} \to \infty$ as $k\to\infty$, there exists $n \in \N$ (which we fix from now on) such that $a_{n} \geq - 24 c^{-2}\log\eps$. 


Since $(\delta_{m})_{m\in\N}$ is a null sequence and weak convergence $x_m \rightharpoonup \bar z$ implies componentwise convergence, \eqref{equ:Radon_Riesz_implication} guarantees the existenceof $m^{\star} \geq m_{1}$ such that $\delta_{m^{\star}}\leq c/6$, $\norm{\Pi^{n}(\bar{z} - x_{m^{\star}})}_{\cH} < c/2$ and $\|x_{m^{\star}}\|_{\cH} > \|\bar z\|_{\cH} + c$.
This implies
\begin{align*}
    \|\Pi_n x_{m}\|_\cH &\geq \|x_{m}\|_\cH - \|\Pi^n x_{m}\|_{\cH}
    > \|\bar z\|_\cH + c - \|\Pi^n(x_{m} - \bar z)\|_\cH-\|\bar z\|_\cH
    \geq c/2
\end{align*}
and \Cref{lem:bounds36_balls_dlsv} yields
\[
\ratio{x_{m^\star}}{0 }{\delta_{m^\star}}
  \leq
  \exp\left( -\frac{a_{n}}{2}\left[(\|\Pi_{n}x_{m^\star}\|_{\cH}-\delta_{m^\star})^2- \delta_{m^\star}^2 \right]\right)
  \leq
  \exp \left(-\frac{a_{n}c^2}{24}  \right)
  \leq
  \eps.\qedhere
\]

\end{proof}

\begin{proof}[Proof of \Cref{thm:main_hilbert}]
By \Cref{lem:bounds36_balls_dlsv,lem:newlemma37_hilbert,lem:newlemma39_hilbert},
\Cref{cond:main_conditions} \ref{item:main_conditions_general_decay}, \ref{item:main_conditions_outside_E} and \ref{item:main_conditions_weak_not_strong} are fulfilled, while the weakly convergent subsequence condition \ref{item:main_conditions_existence_weak_limit} follows from the reflexivity of $\cH$.
Hence, all statements follow from \Cref{thm:main_general}.

\end{proof}

\section{The case \texorpdfstring{$X = \ell^{p}$}{X = lp}: Proof of \texorpdfstring{\Cref{thm:main_lp}}{main result}}
\label{section:proof_lp}

In this section we will extend the results in \Cref{section:Hilbert} to the spaces $X = \ell^{p}$, $1\leq p < \infty$, i.e.\ we will prove \Cref{thm:main_lp}.
Note that \Cref{thm:main_lp} is an actual generalization of \Cref{thm:main_hilbert} since the covariance structure in a Hilbert space can always be ``diagonalized'' by choosing an orthonormal eigenbasis of the covariance operator, which is a consequence of the Karhunen--Loève expansion \citep[Theorem 2.21]{sprungk2017numerical}.
In other words, the Hilbert space case $(\cH, \mu)$ with an arbitrary non-degenerate Gaussian measure $\mu$ is equivalent to the case $(\ell^2, \otimes \mathcal N(0, \sigma_k^2))$, where $\sigma_k^2$ are the corresponding eigenvalues (note that the Cameron--Martin space $E$ respects this equivalence due to \eqref{equ:CM_Hilbert}), and the setting considered in this manuscript corresponds to the canonical generalization from $\ell^2$ to $\ell^p$, $1\leq p < \infty$.

While our proof strategy is quite similar to the one in \citep{dashti2013map}, the strong discrepancy between the geometries of the unit balls in $E$ and $X = \ell^{p}$ for $p \neq 2$ poses a strong obstacle when attempting to extract an exponential decay rate out of the ratio $\ratio{z}{0}{\delta}$ with fixed $\delta > 0$, similar to the statement of \Cref{lem:bounds36_balls_dlsv} in the Hilbert space case.

To see exactly why this is problematic, let us reiterate on the crucial line in the proof of \Cref{lem:bounds36_balls_dlsv}.
We set $n=1$ for simplicity, and we focus on the finite-dimensional case (or finite-dimensional approximation to the infinite-dimensional case) which allows to write the integrals with respect to Lebesgue measure.
Due to the fact that the Hilbert space norm coincides with an (unweighted) $\ell^2$-norm, we can extract a multiple of the Hilbert space norm out of the integral, where $\delta > 0$, $z\in\cH$ and $r > 0$ is a sufficiently small constant:
\begin{align*}
\ratio{z}{0}{\delta} &=\frac{\int_{B_\delta(z)} \exp\left(-\frac{1}{2}\left[a_1 x_1^2 + \cdots + a_N x_N^2\right]  \right) \d x}{\int_{B_\delta(0)}\exp\left(-\frac{1}{2}\left[a_1 x_1^2 + \cdots + a_N x_N^2\right]  \right) \d x} \\
&\leq
\frac{\sup_{x\in B_\delta(z)} \exp(-\tfrac{r}{2}\|x\|_{\cH}^2)}{\inf_{x\in B_\delta(0)} \exp(-\tfrac{r}{2}\|x\|_{\cH}^2))} \\
&\qquad \cdot \frac{\int_{B_\delta(z)} \exp\left(-\frac{1}{2}\left[(a_1-r) x_1^2 + \cdots + (a_N-r)x_N^2\right]  \right) \d x}{\int_{B_\delta(0)}\exp\left(-\frac{1}{2}\left[(a_1-r) x_1^2 + \cdots + (a_N-r)x_N^2\right]  \right)\d x} \\
&\leq
\frac{\exp(-\tfrac{r}{2}(\|z\|_{\cH}-\delta)^2)}{\exp(-\tfrac{r}{2}\delta^2)}
\end{align*}

where the second factor (the ratio of the remaining integrals) can be bounded by $1$ due to Anderson's inequality (\Cref{thm:anderson}) under some prerequisites: First, the ambient space norm $\|\quark\|_\cH$ needs to be dominated by (a multiple of) the Cameron--Martin norm such that the integrand is integrable --- this is also true for the Banach space case, simply by compact embedding of $E$ in $X$. Second, the function $|\quark|_E - r \|\quark\|_\cH$ needs to be convex. This is trivially the case in the Hilbert space case due to this difference being a positive definite quadratic, but does not generalize to the Banach space case. Indeed, $|\quark|_E - \beta \|\quark \|_p$ is not convex for $p = 1$ and any $\beta > 0$. This issue is solved (in the general $\ell^p$ case) by \Cref{prop:convexitybound_lp_all_p}, which demonstrates how to find functions $L$ such that $|\quark|_E^2 - \beta L(\quark)$ is convex and $L$ is a suitable surrogate of the ambient space norm $\| \quark \|_p$, see \Cref{fig:nonconvex} for an illustration.

\begin{figure}
    \centering
    \includegraphics[width=0.49\textwidth]{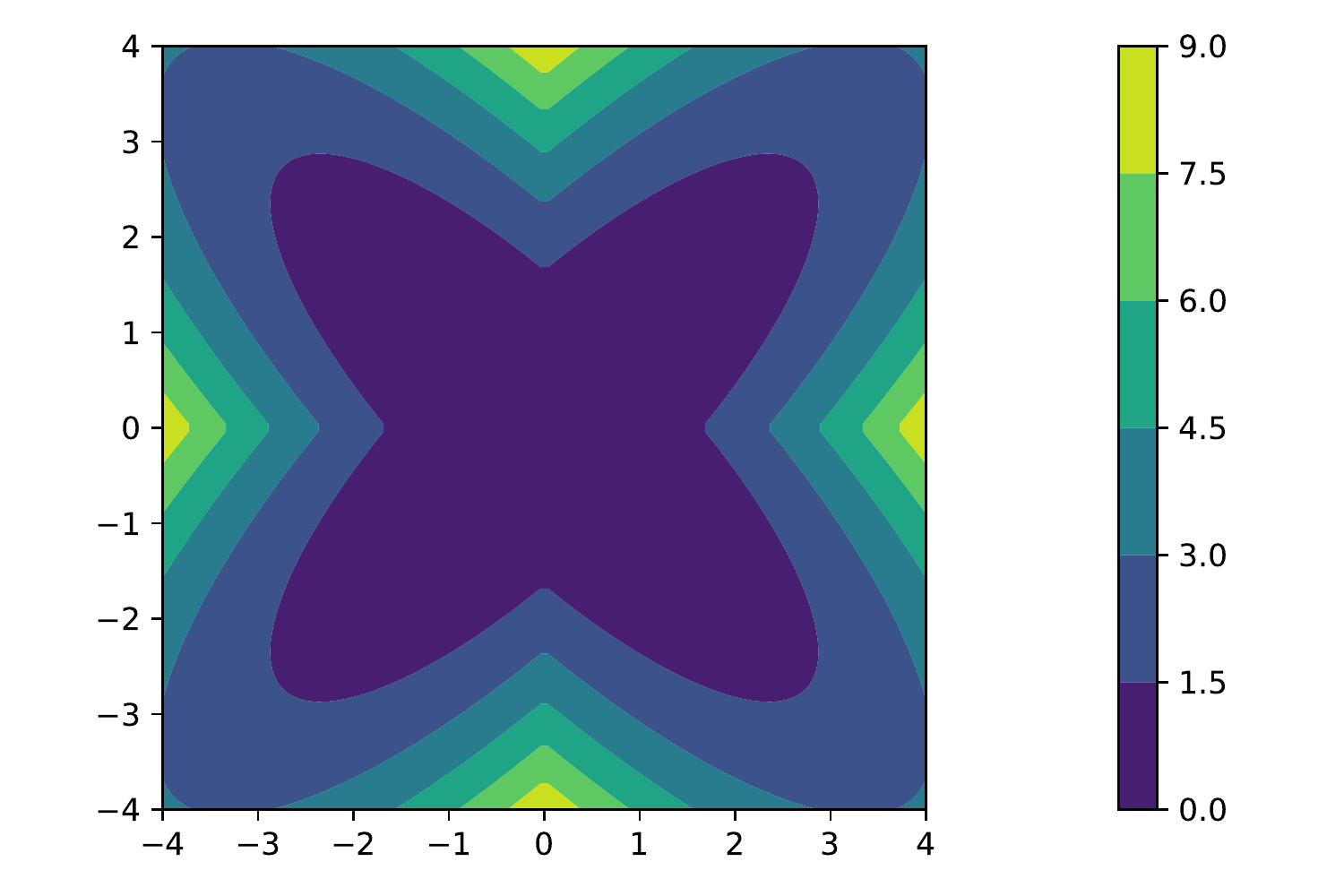}
    \includegraphics[width=0.49\textwidth]{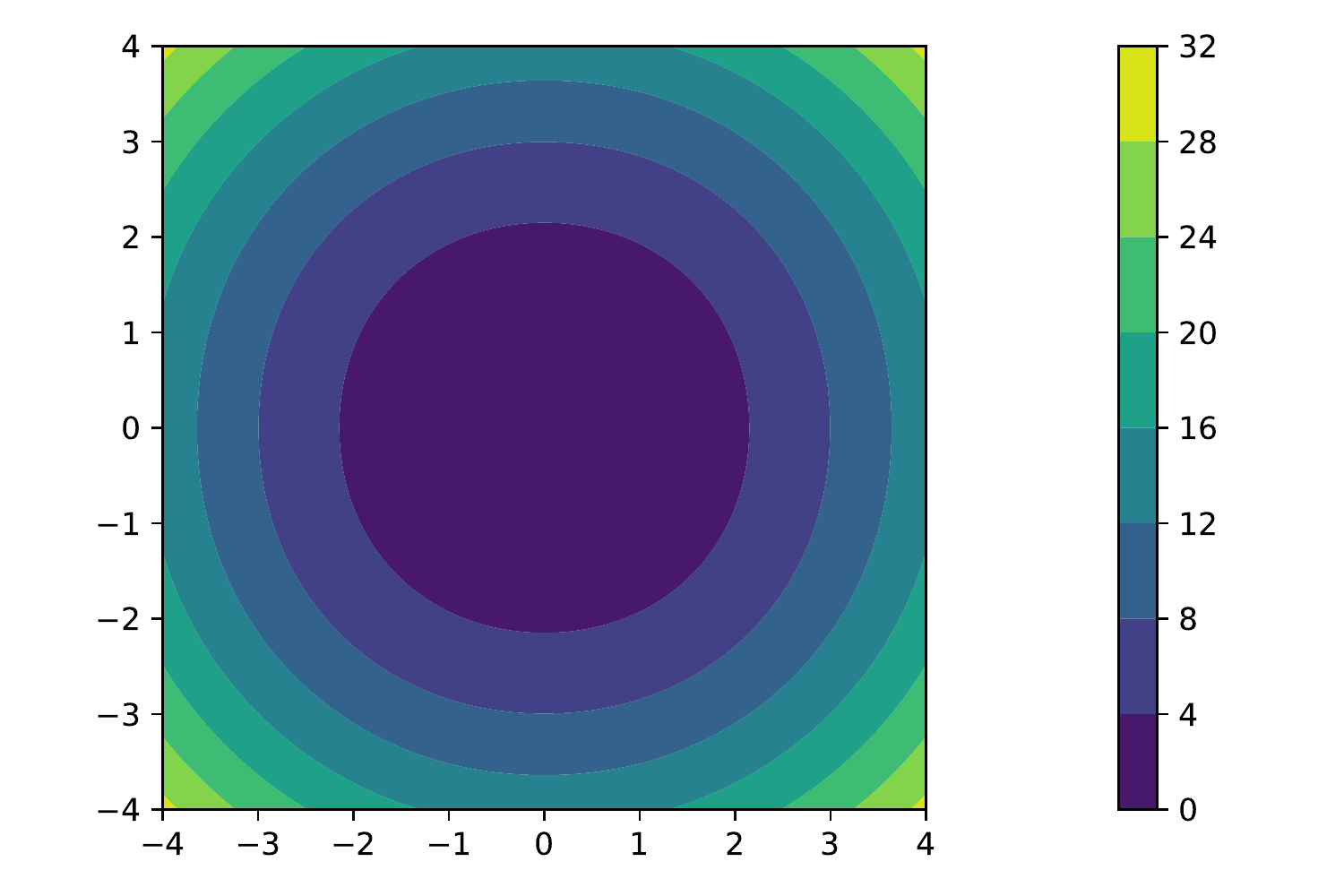}
    \caption{Visualization of the 2d case, $X = \ell^1$ and $\mu = \mathcal N(0,1)\otimes \mathcal N(0,1)$. \emph{Left:} Plot of the function $(x_{1},x_{2}) \mapsto x_1^2+x_2^2 - \beta (|x_1|+|x_2|)^2$ for a specific $\beta > 0$. The level sets show that this function is non-convex (this is indeed true for any $\beta > 0$). \emph{Right:} Plot of the function $(x_{1},x_{2}) \mapsto x_1^2+x_2^2 - \beta L(x_1,x_2)$ for suitable $\beta$, which is seen to be convex. 
    }
    \label{fig:nonconvex}
\end{figure}

\Cref{prop:bound_balls_lp_finite} then leverages this result towards a generalization of \Cref{lem:bounds36_balls_dlsv} in the $\ell^p$ case, after which the proof of validity of \Cref{cond:main_conditions} and subsequently \Cref{thm:main_lp} is more or less straight-forward.

When working in sequence spaces $X\subseteq \R^{\N}$, such as $\ell^{p}$ spaces, one important technique \citep{dashti2013map,ayanbayev2021gammab,agapiou2018sparsity} is to consider 
finite-dimensional approximations of $\mu(B_{\delta}(x))$, $x\in X$.
For this purpose, we introduce the following notation:

\begin{assumption}
\label{ass:general_assumption_lp}
We consider $X = \ell^{p} \coloneqq \ell^{p}(\N)$ with $1\leq p < \infty$ together with $\mu=\otimes_{j\in\N} \mathcal N(0,\sigma_{j}^2)$, a non-degenerate centred Gaussian measure on $X$ with diagonal covariance structure, where $\sigma_{1}\geq \sigma_{2} \geq \cdots > 0$ and $\sum_{j\in\N} \sigma_j^p <\infty$. 
\end{assumption}

\begin{remark}
The condition $\sum_{j\in\N} \sigma_j^p <\infty$ is a necessary condition for $\mu(X) = 1$ (i.e. samples $(x_i)_{i\in\N}$ are almost surely in $\ell^p$), see \citep[Lemma B.3]{ayanbayev2021gammab}.

\end{remark}

\begin{notation}
\label{notation:l_p_setting}
Let \Cref{ass:general_assumption_lp} hold.
Define
\[
\alpha \coloneqq \min(p,2),
\qquad
q \coloneqq \max ( p, 2(p-1)^{2}),
\qquad
S \coloneqq \Big( \sum_{j\in\N} \sigma_j^p \Big)^{1/p}.
\]
Further, for $k,K\in\N \cup \{0\}$ with $K>k$ define the projection operators $P^{k}\colon \R^{\N} \to \R^{k}$, $P_{k}\colon \R^{\N} \to \R^{\N}$, $P_{k}^{K}\colon \R^{\N} \to \R^{K-k}$ and $P^{-k}\colon \R^{k} \to \R^{\N}$ by
\begin{align*}
P^k(x) &\coloneqq (x_1,\ldots, x_k),&
P_k(x) &\coloneqq (x_{k+1}, x_{k+2},\ldots),
\\
P_{k}^{K}(x) &\coloneqq (x_{k+1}, \ldots, x_{K}),&
P^{-k}(u) &\coloneqq (u_1,\ldots, u_k,0,0,\ldots),
\end{align*}
where $P^{k} \coloneqq 0$ for $k=0$.
Accordingly, we define, for any $u\in\R^{k}$ and $v\in\R^{K}$,
\begin{itemize}
\item
$\displaystyle \absval{u}_{E^{k}} \coloneqq \sum_{j=1}^{k} \sigma_{j}^{-2} u_{j}^{2},
\qquad
\absval{v}_{E_{k}^{K}} \coloneqq \sum_{j=k+1}^{K} \sigma_{j}^{-2} v_{j}^{2}$,
\item
$\displaystyle B_{\delta}^{k}(u) \coloneqq \{ w\in\R^{k} \mid \norm{w-u}_{p} < \delta \}$,
\item 
$\displaystyle \mu_{k}=\otimes_{j=1}^{k} \mathcal N(0,\sigma_{j}^2)$.
\end{itemize}
Note that $\frac{1}{2}|\quark|_{E^k}$ is the negative log density of $\mu_{k}$.
\end{notation}

\begin{lemma}
\label{lemma:CM_lp}
If \Cref{ass:general_assumption_lp} holds, then the Cameron--Martin space of $(\ell^p, \mu)$ is given by $E = \{z\in\ell^p:~ |z|_E < \infty\}$ where $|z|_E^2 := \sum_{k=1}^\infty\frac{z_k^2}{\sigma_k^2}$.
\end{lemma}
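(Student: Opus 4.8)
The plan is to compute the Cameron--Martin norm directly from the intrinsic characterisation of $E$ and to match it with the weighted $\ell^2$-norm in the statement. Recall that, for a centred Gaussian measure $\mu$ on a separable Banach space $X$, the Cameron--Martin space admits the description $E = \Set{h\in X}{\absval{h}_E < \infty}$ with
\[
\absval{h}_E = \sup\Set{\ell(h)}{\ell\in X^\ast,\ \int_X \ell(u)^2\,\d\mu(u)\le 1},
\]
a standard fact from the theory of Gaussian measures that generalises the Hilbert space formula \eqref{equ:CM_Hilbert}. Since $\mu=\otimes_{j\in\N}\calN(0,\sigma_j^2)$ is diagonal, every $\ell\in X^\ast=(\ell^p)^\ast$ acts coordinatewise as $\ell(x)=\sum_{j\in\N}\ell_j x_j$, and independence and centredness of the coordinates yield, for the partial sums $S_N(x)\coloneqq\sum_{j=1}^N \ell_j x_j$, the exact Gaussian identity $\int_X S_N^2\,\d\mu=\sum_{j=1}^N \ell_j^2\sigma_j^2$. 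The whole argument then reduces to establishing the two bounds $\absval{h}_E\le(\sum_{j}h_j^2/\sigma_j^2)^{1/2}$ and $\absval{h}_E\ge(\sum_j h_j^2/\sigma_j^2)^{1/2}$.

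For the lower bound I would use finitely supported test functionals: given $h\neq 0$ in $\ell^p$ with $\sum_j h_j^2/\sigma_j^2<\infty$ and $n\in\N$, set $\ell^{(n)}_j\coloneqq c_n h_j/\sigma_j^2$ for $j\le n$ and $\ell^{(n)}_j\coloneqq 0$ otherwise, where $c_n\coloneqq(\sum_{j\le n}h_j^2/\sigma_j^2)^{-1/2}$. Each $\ell^{(n)}$ has finite support, hence lies in $(\ell^p)^\ast$ for every $p\in[1,\infty)$, and the identity above gives $\int_X(\ell^{(n)})^2\,\d\mu=1$, so $\ell^{(n)}$ is admissible with $\ell^{(n)}(h)=(\sum_{j\le n}h_j^2/\sigma_j^2)^{1/2}\to(\sum_j h_j^2/\sigma_j^2)^{1/2}$ as $n\to\infty$. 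For the upper bound, note that for admissible $\ell$ the partial sum $S_N$ is exactly the conditional expectation of $\ell\in L^2(\mu)$ given the first $N$ coordinates, so $L^2$-contractivity of conditional expectation yields $\sum_{j\le N}\ell_j^2\sigma_j^2=\int_X S_N^2\,\d\mu\le\int_X\ell^2\,\d\mu\le 1$ for every $N$, whence $\sum_j\ell_j^2\sigma_j^2\le 1$; Cauchy--Schwarz then gives $\ell(h)=\sum_j(\ell_j\sigma_j)(h_j/\sigma_j)\le(\sum_j h_j^2/\sigma_j^2)^{1/2}$. Taking suprema and combining the two bounds gives $\absval{h}_E^2=\sum_j h_j^2/\sigma_j^2$, and therefore $E=\Set{z\in\ell^p}{\absval{z}_E<\infty}$ with $\absval{z}_E^2=\sum_k z_k^2/\sigma_k^2$, as claimed.

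The main obstacle is the technical identity $\int_X\ell(u)^2\,\d\mu(u)=\sum_j\ell_j^2\sigma_j^2$, and more precisely the $L^2(\mu)$-convergence of the partial sums $S_N$ to $\ell$; I would handle this uniformly across all $p\in[1,\infty)$ by the conditional-expectation (martingale convergence) argument, which is insensitive to the non-reflexivity of $\ell^1$ and avoids any appeal to the structure of the dual beyond $\ell(x)=\sum_j\ell_j x_j$. Two further points deserve care but are routine: that finitely supported functionals genuinely lie in $(\ell^p)^\ast$ for every $p$ (so the lower-bound construction is legitimate even when $p=1$), and that the inclusion $E\subseteq\ell^p$ is automatic, since the supremum defining $\absval{h}_E$ ranges only over $h\in X=\ell^p$.
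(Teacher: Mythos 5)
Your proof is correct, but it takes a genuinely different route from the paper's. The paper's argument is essentially a two-line reduction: it continuously and linearly embeds $(\ell^p,\mu)$ into a Hilbert space ($\cH=\ell^2$ when $p\le 2$, using $\norm{\quark}_2\le\norm{\quark}_p$; a weighted space $\cH=\{x:\sum_k b_kx_k^2<\infty\}$ with $b\in\ell^{p/(p-2)}$ when $p>2$, via H\"older), invokes \citep[Lemma 3.2.2]{bogachev1998gaussian} to conclude that the Cameron--Martin space and its norm are unchanged under such an embedding, and then reads off the formula from the standard Hilbert-space result \eqref{equ:CM_Hilbert}. You instead compute $\absval{h}_E$ directly from the dual-supremum characterisation $\absval{h}_E=\sup\{\ell(h):\ell\in X^\ast,\ \norm{\ell}_{L^2(\mu)}\le 1\}$, with finitely supported functionals $\ell^{(n)}_j=c_nh_j/\sigma_j^2$ giving the lower bound and the contractivity of conditional expectation plus Cauchy--Schwarz giving the upper bound. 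Both are valid; the paper's proof is shorter by outsourcing the work to two citations but needs the auxiliary Hilbert space construction for $p>2$, while yours is self-contained, uniform in $p\in[1,\infty)$, and makes the duality mechanism explicit. Two minor remarks: the coordinatewise representation $\ell(x)=\sum_j\ell_jx_j$ is just the standard identification $(\ell^p)^\ast\cong\ell^{p'}$ and has nothing to do with the diagonality of $\mu$ (which you only need for the variance identity); and the $L^2(\mu)$-convergence $S_N\to\ell$ that you flag as the main obstacle is not actually required --- your upper bound only uses the inequality $\int_XS_N^2\,\d\mu\le\int_X\ell^2\,\d\mu$ for each fixed $N$, which follows from contractivity alone.
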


\begin{proof}
By \citep[Lemma 3.2.2]{bogachev1998gaussian}, we may consider $\mu$ as a Gaussian measure on a Hilbert space $\cH \supseteq X$, into which $X$ is continuously and linearly embedded, without changing the Cameron--Martin space or its norm.
If $p\leq 2$, $X$ is continuously embedded in $\cH = \ell^{2} \supset X$, since $\norm{\quark}_{2} \leq \norm{\quark}_{p}$.
For $p > 2$, this can be accomplished by choosing any positive sequence $b \in \ell^{\frac{p}{p-2}}$ and $\cH \coloneqq \{x\in\R^\N \colon \|x\|_{\cH}^2 \coloneqq \sum_{k\in\N} b_k x_k^2 < \infty\}$, since, by Hölder's inequality,
\[
\norm{x}_{\cH}^{2}
=
\sum_{k\in\N} b_k x_k^2
\leq
\norm{b}_{\frac{p}{p-2}} \cdot \norm{ (x_{k}^{2})_{k\in\N} }_{\frac{p}{2}}
\leq
\norm{b}_{\frac{p}{p-2}} \cdot \norm{x}_{p}^{2}.
\]
The Cameron--Martin space and its norm for both $X$ and $\cH$ are therefore given by the well-known formulas \eqref{equ:CM_Hilbert}, see e.g.\ \citep[Theorem 2.23]{da2014stochastic}, proving the claim.
\end{proof}

In order to prove \Cref{thm:main_lp}, we will again proceed by showing \Cref{cond:main_conditions} \ref{item:main_conditions_general_decay} --- \ref{item:main_conditions_weak_not_strong} and then applying \Cref{thm:main_general}.
We start by showing the vanishing condition for weak limits outside $E$ \ref{item:main_conditions_outside_E}, while the vanishing condition for unbounded sequences \ref{item:main_conditions_general_decay} and the vanishing condition for weakly, but not strongly convergent sequences \ref{item:main_conditions_weak_not_strong} will require some additional work (\Cref{prop:convexitybound_lp_all_p,prop:bound_balls_lp_finite}).

\begin{lemma}
\label{lemma:C2_for_lp}
Under \Cref{ass:Phi,ass:general_assumption_lp}, for any family $(x^{\delta})_{0<\delta < 1}$ in $X$ and for any $\bar z \in X \setminus E$, such that $x^\delta \rightharpoonup \bar z$ converges weakly as $\delta \searrow 0$, we have
\[
\limsup_{\delta \searrow 0}\ratio{x^\delta}{0}{\delta}  = 0.
\]
In particular, the vanishing condition for weak limits outside $E$, \Cref{cond:main_conditions} \ref{item:main_conditions_outside_E}, is satisfied.
\end{lemma}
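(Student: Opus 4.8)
The plan is to collapse the infinite-dimensional ratio into a single exponential factor governed by the Cameron--Martin norm of the first $n$ coordinates of $x^\delta$, and then let $n\to\infty$. Throughout, fix $n\in\N$ and write $h^\delta \coloneqq P^{-n}P^n x^\delta = (x_1^\delta,\ldots,x_n^\delta,0,0,\ldots)\in\ell^p$ for the truncation of $x^\delta$ to its first $n$ coordinates; being finitely supported, $h^\delta\in E$ with $\absval{h^\delta}_E^2 = \sum_{j=1}^n (x_j^\delta)^2/\sigma_j^2$. Two elementary facts drive the argument: (i) weak convergence $x^\delta\rightharpoonup\bar z$ in $\ell^p$ implies coordinatewise convergence $x_j^\delta\to\bar z_j$ (each coordinate functional lies in the dual), so for fixed $n$ we have $\absval{h^\delta}_E^2\to\sum_{j=1}^n\bar z_j^2/\sigma_j^2$ as $\delta\searrow 0$; and (ii) since $\norm{\quark}_p$ is a genuine norm for $p\geq 1$, the ball $B_\delta(0)$ is a symmetric convex set, so Anderson's inequality (\Cref{thm:anderson2}) applies in $\ell^p$.

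First I would peel off the shift. Since $B_\delta(x^\delta) = h^\delta + B_\delta(P_n x^\delta)$, the Cameron--Martin formula \citep{bogachev1998gaussian} for the finitely supported shift $h^\delta$ gives
\[
\mu(B_\delta(x^\delta)) = \int_{B_\delta(P_n x^\delta)} \exp\Big(-\widehat{h^\delta}(w) - \tfrac12\absval{h^\delta}_E^2\Big)\,\d\mu(w),
\qquad
\widehat{h^\delta}(w) = \sum_{j=1}^n \frac{x_j^\delta}{\sigma_j^2}\,w_j.
\]
On the domain of integration one has $\norm{P^n w}_p = \norm{P^n(w - P_n x^\delta)}_p \leq \norm{w - P_n x^\delta}_p < \delta$ (because $P_n x^\delta$ vanishes in the first $n$ coordinates), hence $\absval{w_j} < \delta$ for every $j\leq n$ and therefore $\bigabsval{\widehat{h^\delta}(w)} \leq \delta\sum_{j=1}^n \absval{x_j^\delta}/\sigma_j^2 =: \delta\,C_n^\delta$, where $C_n^\delta\to\sum_{j=1}^n\absval{\bar z_j}/\sigma_j^2$ as $\delta\searrow 0$. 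Pulling this uniform bound out of the integral yields $\mu(B_\delta(x^\delta)) \leq \exp(\delta\,C_n^\delta - \tfrac12\absval{h^\delta}_E^2)\,\mu(B_\delta(P_n x^\delta))$.

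Next I would discard the remaining shift with Anderson's inequality: since $B_\delta(P_n x^\delta) = P_n x^\delta + B_\delta(0)$ is a translate of the symmetric convex set $B_\delta(0)$, \Cref{thm:anderson2} (with $\gamma=\mu$, $A=B_\delta(0)$, $a=P_n x^\delta$) gives $\mu(B_\delta(P_n x^\delta)) \leq \mu(B_\delta(0))$. Combining the two estimates gives, for every $\delta\in(0,1)$ and every fixed $n$,
\[
\ratio{x^\delta}{0}{\delta} \leq \exp\Big(\delta\,C_n^\delta - \tfrac12\absval{h^\delta}_E^2\Big).
\]
Taking $\limsup_{\delta\searrow 0}$ and using (i) (so that $\delta\,C_n^\delta\to 0$ while $\absval{h^\delta}_E^2$ converges) leaves $\limsup_{\delta\searrow 0}\ratio{x^\delta}{0}{\delta} \leq \exp(-\tfrac12\sum_{j=1}^n\bar z_j^2/\sigma_j^2)$. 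This holds for every $n$, and since $\bar z\notin E$ we have $\sum_{j=1}^n\bar z_j^2/\sigma_j^2\to\absval{\bar z}_E^2 = \infty$ by \Cref{lemma:CM_lp}; letting $n\to\infty$ forces the $\limsup$ to $0$, which is exactly the (stronger, $\limsup$) form of the claim and in particular establishes \Cref{cond:main_conditions} \ref{item:main_conditions_outside_E}.

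The main obstacle is that, unlike in the Hilbert setting, the $p$-ball does not factorize over coordinates, so one cannot split $\mu(B_\delta(\quark))$ into a first-$n$-coordinate factor times a tail factor. The Cameron--Martin shift circumvents this by converting the first-$n$-coordinate displacement into an exponential density factor whose leading term is precisely $-\tfrac12\absval{h^\delta}_E^2$, after which the leftover set is a pure translate of the centred ball and Anderson's inequality applies verbatim. The only quantity requiring care is the linear Paley--Wiener term $\widehat{h^\delta}$, which is controlled uniformly on the ball by the coordinatewise bound $\absval{w_j}<\delta$; everything else is a finite-dimensional convergence using coordinatewise (hence, for fixed $n$, $E$-norm) convergence of the truncations.
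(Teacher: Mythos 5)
Your proof is correct, but it takes a genuinely different route from the paper's. The paper works with finite-dimensional approximations $\mu_{k}(B_{\delta}^{k}(\quark))$ and extracts the factor $\exp(-\tfrac14\absval{P^{K_1}v}_{E^{K_1}}^2)$ pointwise from the Gaussian density via a sup/inf over the balls; the price is a non-trivial compactness-and-contradiction step (their Step~1) showing that one can choose $K_1$ and $\delta_1$ so that the \emph{entire} $p$-ball of radius $\delta_1$ around $P^{K_1}\bar z$ has large $E^{K_1}$-norm --- this cannot be read off from $\absval{P^{K_1}\bar z}_{E^{K_1}}\to\infty$ alone, since the $E^{K_1}$-diameter of a fixed $p$-ball blows up with $K_1$. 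You sidestep that entirely by translating with the Cameron--Martin formula for the finitely supported shift $h^\delta=P^{-n}P^{n}x^\delta$, which converts the displacement of the first $n$ coordinates \emph{exactly} into the factor $\exp(-\tfrac12\absval{h^\delta}_E^2)$, leaves a pure translate of $B_\delta(0)$ to which Anderson's inequality (\Cref{thm:anderson2}) applies, and reduces the only remaining error to the Paley--Wiener cross term, which your coordinatewise bound $\absval{w_j}<\delta$ controls by $\delta\,C_n^\delta\to 0$. The needed convergences ($C_n^\delta$ and $\absval{h^\delta}_E^2$ for fixed $n$) then follow from coordinatewise convergence alone, and $\bar z\notin E$ closes the argument via \Cref{lemma:CM_lp}. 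This is essentially the technique the paper itself uses for \Cref{lem:limsupballs} in the appendix, transplanted to the present lemma; it is arguably cleaner than the paper's proof here, at the cost of invoking the infinite-dimensional Cameron--Martin machinery rather than elementary finite-dimensional estimates. Both arguments deliver the stronger $\limsup$ form of \Cref{cond:main_conditions}~\ref{item:main_conditions_outside_E}.
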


\begin{proof}
We use \Cref{notation:l_p_setting} throughout the proof.
Let $(x^{\delta})_{0<\delta < 1}$ be a family in $X$ and $\bar z \in X \setminus E$ such that $x^\delta \rightharpoonup \bar z$ converges weakly as $\delta \searrow 0$.
Let $0 < \eps < 1$ be arbitrary and $A \coloneqq \sqrt{8 \log(2/\eps)}$.
We proceed in four steps.
\vspace{2ex}

\textbf{Step 1:} There exist $K_{1}\in \mathbb N $ and $\delta_1 > 0$ such that, for each $u\in B_{\delta_{1}}^{K_{1}}({P^{K_1}\bar z})$, $\absval{u}_{E^{K_{1}}} \geq A$.
\vspace{1ex}

In order to see this, we assume the contrary, i.e.\ for each $K_{1} \in \N$ and $\delta_{1} > 0$, there exists $u\in B_{\delta_{1}}^{K_{1}}(P^{K_1}\bar z)$ with $\absval{u}_{E^{K_{1}}} < A$.
Then, for each $m\in\N$ (choosing $K_{1} = m$ and $\delta_{1} = m^{-1}$), there exists $u^{(m)} \in B_{m^{-1}}^{m}(P^m\bar z_{[1:m]})$ with $\absval{u^{(m)}}_{E^{m}} < A$.

Since $(P^{-m} u^{(m)})_{m\in\N}$ is bounded in $E$ by $A$, it has a weakly convergent (in $E$) subsequence, which, for simplicity, we also denote by $(P^{-m} u^{(m)})_{m\in\N}$, with weak limit $\bar u \in E$.
Further, since $u^{(m)} \in B_{m^{-1}}^{m}(P^m\bar z)$ for each $m\in\N$, $P^{-m}u^{(m)} \to \bar z$ strongly in $X$ as $m\to\infty$:
\[
\norm{P^{-m}u^{(m)} - \bar{z}}_{p}^{p}
=
\norm{u^{(m)} - \bar P^mz}_{p}^{p} + \norm{0 - P_{m} \bar{z}}_{p}^{p}
<
m^{-p} + \norm{P_{m} \bar{z}}_{p}^{p}
\xrightarrow[m\to\infty]{}
0.
\]
By considering each component $j\in\N$ separately, weak convergence in $E$ and (strong) convergence in $X$ imply
\[
u_{j}^{(m)} \xrightarrow[m\to\infty]{} \bar u_{j},
\qquad
u_{j}^{(m)} \xrightarrow[m\to\infty]{} \bar z_{j},
\qquad
j \in \N.
\]
Hence, by the uniqueness of the limit (in $\R$), we obtain the contradiction $E \ni \bar u = \bar z \notin E$.
\vspace{2ex}

\textbf{Step 2:} There exists $0 < \delta_2 < \delta_{1}/2$ such that, for each $0 < \delta < \delta_{2}$ and each $u\in B_{\delta}^{K_1}(P^{K_1}x^{\delta})$, we have that $\absval{u}_{E^{K_{1}}} \geq A$.
\vspace{1ex}

This can be seen as follows: Since $x^\delta \rightharpoonup \bar z$ converges weakly (and therefore componentwise) in $X$, there exists $0 < \delta_2 < \delta_{1}/2$ such that, for each $0 < \delta < \delta_{2}$, we have that $\norm{ P^{K_1}x^{\delta} - P^{K_1}\bar{z} }_{p} < \delta_{1}/2$.

Hence, for each $0 < \delta < \delta_{2}$ and each $u\in B_{\delta}^{K_1}(P^{K_1}x^{\delta})$,
\[
\norm{ u - P^{K_1}\bar{z} }_{p}
\leq
\norm{ u - P^{K_1}x^{\delta} }_{p}
+
\norm{ P^{K_1}x^{\delta} - P^{K_1}\bar{z} }_{p}
<
\delta + \tfrac{\delta_{1}}{2}
\leq 
\delta_{1},
\]
i.e.\ $B_{\delta}^{K_1}(P^{K_1}x^{\delta}) \subseteq B_{\delta_{1}}^{K_1}(P^{K_1}\bar z)$ for each $0 < \delta < \delta_{2}$, and the claim follows from Step 1.
\vspace{2ex}

\textbf{Step 3:}
There exists $0 < \delta^\star < \delta_2$ such that, for each $\delta < \delta^\star$ and each $u\in B_{\delta}^{K_1}(0)$, we have $|u|_{E^{K_1}} \leq A/\sqrt 2$.
\vspace{1ex}

This is evident from the fact that $|\quark|_{E^{K_1}}$ and $\|P^{K_1}\quark\|$ are equivalent norms on the (finite-dimensional) vector space $P^{K_1}X$.

\textbf{Step 4:}
For each $0 < \delta < \delta^\star$, $\ratio{x^\delta}{0}{\delta} \leq \eps$, finalizing the proof.
\vspace{1ex}

Let $0 < \delta < \delta^{\star}$.
For any $x\in X$, since $B_{\delta}(x) = \bigcap_{k\in\N} B_{\delta}^{k} (P^kx) \times \R^{\N \setminus \{ 1,\dots,k \} }$, the continuity of measures implies that $\mu_{k} (B_{\delta}^{k} (P^kx)) \to \mu(B_{\delta}(x))$.
Hence, there exists $k > K_{1}$

such that
\[
\Absval{ \ratio{x^\delta}{0}{\delta}
-
\frac{\mu_{k}(B_{\delta}^{k}(P^kx^\delta))}{\mu_{k}(B_{\delta}^{k}(P^k0))} }
\leq
\frac{\eps}{2}.
\]
Since, for any $x\in X$, $\R^k\ni v\in B_{\delta}^{k}(P^kx) $ implies $ P^{K_1}v\in B_{\delta}^{K_{1}}(P^{K_1}x) $, it follows from Steps 2 and 3 that
\begin{align*}
\ratio{x^\delta}{0}{\delta}
&\leq
\Absval{\ratio{x^\delta}{0}{\delta}
-
\frac{\mu_{k}(B_{\delta}^{k}(P^kx^{\delta}))}{\mu_{k}(B_{\delta}^{k}(P^k0))} }
+
\frac{\mu_{k}(B_{\delta}^{k}(P^kx^{\delta}))}{\mu_{k}(B_{\delta}^{k}(P^k0))}
\\
&\leq
\frac{\eps}{2} +
\frac{ \int_{B_{\delta}^{k}(P^kx^{\delta})} \exp\Big( -\frac{1}{2} \sum_{j=1}^{k} \tfrac{u_{j}^{2}}{\sigma_{j}^{2}} \Big) \, \d u }
{ \int_{B_{\delta}^{k}(0)} \exp\Big( -\frac{1}{2} \sum_{j=1}^{k} \tfrac{u_{j}^{2}}{\sigma_{j}^{2}} \Big) \, \d u }
\\
&\leq
\frac{\eps}{2} +
\frac{ \sup_{v\in B_{\delta}^{k}(P^kx^{\delta})} \exp\big( -\frac{1}{4} \absval{P^{K_1}v}_{E^{K_{1}}}^{2} \big) }
{\inf_{v\in B_{\delta}^{k}(0)} \exp\big( -\frac{1}{4} \absval{P^{K_1}v}_{E^{K_{1}}}^{2} \big)}
\\
&\qquad \cdot \frac{ \int_{B_{\delta}^{k}(P^kx^{\delta})} \exp\Big( -\frac{1}{4} \sum_{j=1}^{K_{1}} \tfrac{u_{j}^{2}}{\sigma_{j}^{2}} -\frac{1}{2} \sum_{j=K_{1}+1}^{k} \tfrac{u_{j}^{2}}{\sigma_{j}^{2}} \Big) \, \d u }
{ \int_{B_{\delta}^{k}(0)} \exp\Big( -\frac{1}{4} \sum_{j=1}^{K_{1}} \tfrac{u_{j}^{2}}{\sigma_{j}^{2}} - \frac{1}{2} \sum_{j=K_{1}+1}^{k} \tfrac{u_{j}^{2}}{\sigma_{j}^{2}} \Big) \, \d u }\\
&\leq
\frac{\eps}{2} +
\exp\big( - \tfrac{A^{2}}{4} + \tfrac{A^{2}}{8} \big) \cdot 1 =
\eps,
\end{align*}
where we bounded the last ratio of integrals by $1$ using Anderson's inequality (\Cref{thm:anderson}).

\end{proof}

As explained above, the following proposition implements a convexification of the function $|\quark|_E - \beta \|\quark \|_p$, which is necessary for the application of Anderson's inequality in the proof of \Cref{prop:bound_balls_lp_finite}:

\begin{proposition}
\label{prop:convexitybound_lp_all_p}
Using \Cref{notation:l_p_setting},
let $1\leq p < \infty$, let $k\in \N$ and $\rho\in\R^{k}$ with $\rho_{1} \geq \cdots \geq \rho_{k} > 0$.
Further, let $\gamma > 0$, let $\beta_{\ast} \coloneqq \frac{2\gamma^{2-\alpha}}{q\rho_{1}^{\alpha}}$ and let $0 \leq \beta < \beta_{\ast}$.
Then the functions
$L_{\rho,\gamma},\, f_{\rho,\beta,\gamma} \colon \R^{k} \to \R$
given by
\begin{align*}
L_{\rho,\gamma}(x)
&\coloneqq
\begin{cases}
\sum_{j=1}^{k} (\gamma^2\rho_{j}^2 + x_{j}^2)^{p/2} - (\gamma\rho_{j})^{p}
& \text{ if $1\leq p \leq 2$},
\\
\norm{x}_{p}^{2}
& \text{ if $2 < p < \infty$},
\end{cases}
\\
f_{\rho,\beta,\gamma}(x)
&=
\sum_{j=1}^{k} \tfrac{x_{j}^{2}}{\rho_{j}^{2}} - \beta L_{\rho,\gamma}(x),
\end{align*}
satisfy
\begin{enumerate}[label = (\alph*)]
    \item \label{item:L_proposition_inequalities}
    $\norm{x}_{p}^{\alpha} - \gamma^{\alpha} \norm{\rho}_{p}^{\alpha}
    \leq
    L_{\rho,\gamma}(x)
    \leq
    \norm{x}_{p}^{\alpha}$ for any $x\in\R^{k}$;
    \item \label{item:L_proposition_nonnegativity}
    $f_{\rho,\beta,\gamma}$ is non-negative;
    \item \label{item:L_proposition_convexity}
    $f_{\rho,\beta,\gamma}$ is convex.
\end{enumerate}
\end{proposition}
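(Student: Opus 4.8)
The plan is to treat (a) by elementary pointwise inequalities, to reduce (b) to (c), and to invest the real effort into the convexity statement (c). For (a) with $2 < p < \infty$ there is nothing to do, since $L_{\rho,\gamma}(x) = \norm{x}_{p}^{2} = \norm{x}_{p}^{\alpha}$ makes both inequalities trivial. For $1 \le p \le 2$ (where $\alpha = p$) I would argue coordinatewise: with $a = \gamma\rho_{j}$ and $b = \absval{x_{j}}$ the upper bound is the subadditivity $(a^{2}+b^{2})^{p/2} \le a^{p} + b^{p}$ of $t \mapsto t^{p/2}$ on $[0,\infty)$ (valid because $p/2 \le 1$), summed over $j$, while the lower bound is monotonicity, $(x_{j}^{2})^{p/2} \le (\gamma^{2}\rho_{j}^{2}+x_{j}^{2})^{p/2}$, after noting $\sum_{j}(\gamma\rho_{j})^{p} = \gamma^{\alpha}\norm{\rho}_{p}^{\alpha}$. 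For (b) I would observe that $L_{\rho,\gamma}(0) = 0$ in both cases, so $f_{\rho,\beta,\gamma}(0) = 0$, and that $f_{\rho,\beta,\gamma}$ is even (it depends on $x$ only through the $x_{j}^{2}$, resp.\ through $\norm{x}_{p}$). Granting (c), evenness and convexity give $f_{\rho,\beta,\gamma}(0) \le \tfrac12 f_{\rho,\beta,\gamma}(x) + \tfrac12 f_{\rho,\beta,\gamma}(-x) = f_{\rho,\beta,\gamma}(x)$, so $f_{\rho,\beta,\gamma} \ge 0$.

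For (c) I would split at $p = 2$. When $1 \le p \le 2$ the key structural fact is that $L_{\rho,\gamma}$, and hence $f_{\rho,\beta,\gamma}$, is separable: $f_{\rho,\beta,\gamma}(x) = \sum_{j} \phi_{j}(x_{j})$ with $\phi_{j}(t) = t^{2}/\rho_{j}^{2} - \beta[(\gamma^{2}\rho_{j}^{2}+t^{2})^{p/2} - (\gamma\rho_{j})^{p}]$, so convexity of $f_{\rho,\beta,\gamma}$ is equivalent to convexity of each one-dimensional $\phi_{j}$. Writing $a = \gamma^{2}\rho_{j}^{2}$, I would compute $\frac{d^{2}}{dt^{2}}(a+t^{2})^{p/2} = p(a+t^{2})^{p/2-2}[a+(p-1)t^{2}]$, so $\phi_{j}'' \ge 0$ is equivalent to $\beta p (a+t^{2})^{p/2-2}[a+(p-1)t^{2}] \le 2/\rho_{j}^{2}$ for all $t$. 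Substituting $s = t^{2}$ and $F(s) = (a+s)^{p/2-2}[a+(p-1)s]$, a short computation gives $F'(s) = \tfrac{p-2}{2}(a+s)^{p/2-3}[3a+(p-1)s] \le 0$ for $p \le 2$, so $F$ is maximized at $s = 0$, with $F(0) = a^{p/2-1} = \gamma^{p-2}\rho_{j}^{p-2}$. The resulting condition $\beta \le 2\gamma^{2-p}/(p\rho_{j}^{p})$ is tightest for the largest $\rho_{j}$, i.e.\ $j = 1$, and since $q = p$ for $p \le 2$ this is exactly $\beta \le \beta_{\ast}$. This is the step where the regularizer $\gamma$ is indispensable: it renders $L_{\rho,\gamma}$ smooth (indeed quadratic) at the origin and keeps $\sup_{s} F = F(0)$ finite, whereas the unsmoothed $\norm{\quark}_{p}$ forces nonconvexity for every $\beta > 0$ when $p = 1$.

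When $2 < p < \infty$ the function $L_{\rho,\gamma}(x) = \norm{x}_{p}^{2}$ is not separable and I would instead bound its Hessian. On the open set where all coordinates are nonzero, with $s = \norm{x}_{p}^{p}$ and $y_{j} = \absval{x_{j}}^{p-1}\mathrm{sgn}(x_{j})$, a direct computation yields
\[
\nabla^{2}\bigl(\norm{x}_{p}^{2}\bigr) = -2(p-2)\,s^{2/p-2}\,yy^{\top} + 2(p-1)\,s^{2/p-1}\,\diag\bigl(\absval{x_{j}}^{p-2}\bigr).
\]
For $p > 2$ the rank-one term is negative semidefinite and may be discarded, and the crude bound $\absval{x_{j}}^{p-2} \le \norm{x}_{p}^{p-2} = s^{(p-2)/p}$ makes the exponents cancel, leaving $\nabla^{2}(\norm{x}_{p}^{2}) \preceq C_{p}\,I$ in the Euclidean sense with a constant $C_{p} \le q$. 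Since $\diag(2/\rho_{j}^{2}) \succeq (2/\rho_{1}^{2})\,I$ and $\beta < \beta_{\ast} = 2/(q\rho_{1}^{2})$, I obtain $\nabla^{2}f_{\rho,\beta,\gamma} \succeq (2/\rho_{1}^{2} - \beta C_{p})\,I \succeq 0$ there. Because $\norm{\quark}_{p}^{2} \in C^{1}(\R^{k})$ for $p > 2$ with gradient vanishing at the origin, this pointwise PSD bound on a dense open set upgrades to global convexity, e.g.\ by checking that $t \mapsto f_{\rho,\beta,\gamma}(x+th)$ is $C^{1}$ with nondecreasing derivative across the finitely many coordinate-crossings.

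I expect the convexity claim (c) to be the crux, and specifically the regime $1 \le p \le 2$: this is the only place where the difference of norms is genuinely nonconvex, so the argument cannot be soft and must exploit the precise smoothed form of $L_{\rho,\gamma}$. The delicate bookkeeping will be verifying $\sup_{s\ge 0} F(s) = F(0)$ and checking that the threshold it produces matches $\beta_{\ast}$ exactly; by comparison the $p > 2$ Hessian estimate is routine apart from the mild care needed at the coordinate hyperplanes where $\norm{\quark}_{p}^{2}$ fails to be $C^{2}$.
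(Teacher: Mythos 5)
Your proof is correct, but it is organized in a genuinely different way from the paper's, and the comparison is instructive. First, you reverse the logical order of (b) and (c): the paper proves non-negativity (b) directly (componentwise Bernoulli's inequality for $1\le p\le 2$; the comparison $\rho_1^{-2}\norm{x}_2^2\ge\beta\norm{x}_p^2$ for $p>2$) and then \emph{uses} (b) inside its convexity proof --- for $p>2$ it treats segments through the origin via the homogeneity $f_{\rho,\beta,\gamma}(tx)=t^2f_{\rho,\beta,\gamma}(x)$, which needs the prefactor $f_{\rho,\beta,\gamma}(x)\ge 0$. You prove (c) without ever invoking (b) and then get (b) as a one-line corollary of evenness, $f_{\rho,\beta,\gamma}(0)=0$ and convexity; since your convexity argument never touches (b), there is no circularity, and this buys a shorter proof of (b). Second, for $p>2$ the paper establishes positive definiteness of $\nabla^{2}f_{\rho,\beta,\gamma}$ on $\R^{k}\setminus\{0\}$ (keeping the rank-one term $2\beta(p-2)h(x)h(x)^{\intercal}\succeq 0$ and bounding the diagonal via $\absval{g_j}\le 1$) and then runs a three-case analysis: a half-space argument for segments avoiding $0$, homogeneity plus (b) for segments through $0$, and continuity for $y=0$. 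You instead discard the negative semidefinite rank-one part of $\nabla^{2}\bigl(\norm{x}_{p}^{2}\bigr)$, obtain the uniform operator bound $\nabla^{2}\bigl(\norm{x}_{p}^{2}\bigr)\preceq 2(p-1)I\preceq qI$ (indeed $2(p-1)\le 2(p-1)^2\le q$ for $p\ge 2$), and conclude $\nabla^{2}f_{\rho,\beta,\gamma}\succeq(2/\rho_1^2-\beta q)I\succ 0$, patching across the non-smooth locus with only the global $C^1$ regularity of $\norm{\quark}_{p}^{2}$; this avoids the homogeneity case entirely, which is exactly what permits the reversal of (b) and (c). For $1\le p\le 2$ the two proofs are essentially identical (diagonal Hessian, one-dimensional second-derivative bound); you compute $\sup_{s\ge 0}F(s)=F(0)$ exactly by showing $F'\le 0$, where the paper reaches the same bound $\tau^{p-2}$ by the cruder estimate $\tau^2+(p-1)\xi^2\le\tau^2+\xi^2$, and both land on the same threshold $\beta_\ast$. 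Part (a) is the same in both.

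One detail to tighten in your $p>2$ argument: as stated, the patching step (``finitely many coordinate-crossings'') does not cover segments that lie \emph{inside} a coordinate hyperplane, where the complement of your all-coordinates-nonzero set meets the segment in more than finitely many points. Two quick fixes: (i) observe that for $p>2$ all second partials of $\norm{\quark}_{p}^{2}$ exist and are continuous on all of $\R^{k}\setminus\{0\}$ (the relevant factors $\absval{x_l}^{p-2}$ and $\absval{x_l}^{p-1}\mathrm{sgn}(x_l)$ are continuous precisely because $p>2$), so your Hessian bound extends by continuity and the only bad point on any line is the origin, after which your $C^1$ argument applies verbatim; or (ii) approximate an arbitrary segment by segments in general position and pass to the limit using continuity of $f_{\rho,\beta,\gamma}$. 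Either closes the gap in a line or two.
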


\begin{proof}
Recall that, for $0\leq p_{1} \leq p_{2} < \infty$, and $v \in \R^{n}$, $n\in\N$,
\begin{equation}
\label{equ:p_norm_inequality}
\norm{v}_{p_{1}} \geq \norm{v}_{p_{2}}.
\end{equation}
While \ref{item:L_proposition_inequalities} is trivial for $p>2$, it follows for $1\leq p \leq 2$ directly from the inequalities
$a^{q} \leq (a+b)^{q} \leq a^{q} + b^{q}$ for any $a,b \geq 0$ and $q\leq 1$, where the second inequality is a consequence of \eqref{equ:p_norm_inequality} for $v = (a,b)$:
\[
(a+b)^{q} = \norm{(a,b)}_{1}^{q}\leq \norm{(a,b)}_{q}^{q} = a^{q} + b^{q}.
\]

For \ref{item:L_proposition_nonnegativity}, note that, for any $\xi \in \R$, $1\leq p \leq 2$ and $r,\beta,\tau > 0$
\[
\frac{\xi^{2}}{r^{2}} - \beta (\tau^{2} + \xi^{2})^{p/2} + \beta \tau^{p}
\geq
0
~\Longleftrightarrow~
\beta \tau^{p} \left( 1 + \frac{1}{\beta \tau^{p}} \frac{\xi^{2}}{r^{2}}\right)
\geq
\beta \tau^{p}\left( 1 + \frac{\xi^{2}}{\tau^{2}}\right)^{p/2},
\]
which holds true, using Bernoulli's inequality with exponent $p/2 \leq 1$, for any $0 < \beta \leq \frac{2\tau^{2-p}}{pr^{2}}$:
\[
\left( 1 + \frac{\xi^{2}}{\tau^{2}}\right)^{p/2}
\leq
1 + \frac{p}{2}\frac{\xi^{2}}{\tau^{2}}
\leq
1 + \frac{\xi^{2}}{\beta \tau^{p} r^{2}}.
\]
By applying this observation componentwise with $r = \rho_{j}$ and $\tau = \gamma\rho_{j}$, we see that $f_{\rho,\beta,\gamma}$ is (globally) non-negative for any $0 < \beta \leq \min_{j=1,\dots,k} \frac{2\gamma^{2-p}}{p\rho_{j}^{p}} = \frac{2\gamma^{2-p}}{p\rho_{1}^{p}}$, proving \ref{item:L_proposition_nonnegativity} for any $1\leq p \leq 2$ (for $\beta = 0$ the claim holds trivially).
In the case $p>2$, \ref{item:L_proposition_nonnegativity} follows from \eqref{equ:p_norm_inequality}, since, for any $0 \leq \beta \leq \rho_{1}^{-2}$,
\[
\sum_{j=1}^{k} \tfrac{x_{j}^{2}}{\rho_{j}^{2}}
\geq
\rho_{1}^{-2} \norm{x}_{2}^{2}
\geq
\beta \norm{x}_{p}^{2}.
\]

For \ref{item:L_proposition_convexity}, first consider the case $1\leq p \leq 2$, for which the Hessian of $f_{\rho,\beta,\gamma}$ is diagonal.
Hence $f_{\rho,\beta,\gamma}$ is convex if and only if all those diagonal entries,
\[
\frac{\partial^{2} f_{\rho,\beta,\gamma}}{\partial x_{j}^{2}} (x)
=
\frac{2}{\rho_{j}^2} -
\beta p \, \frac{\gamma^{2}\rho_{j}^2 + (p-1) x_{j}^{2}}{(\gamma^{2}\rho_{j}^2 + x_{j}^2)^{2 - p/2}},
\qquad
j = 1,\dots,k,
\]
are non-negative functions.
Since, for $\tau > 0$, $\xi \in \R$ and $1\leq p \leq 2$,
\begin{equation}
\label{equ:convexitybound_lp_technical_inequality}
\frac{\tau^2 + (p-1) \xi^{2}}{(\tau^2 + \xi^2)^{2 - p/2}}
\leq
\frac{\tau^2 + \xi^{2}}{(\tau^2 + \xi^2)^{2 - p/2}}
=
\tau^{p-2}\, 
\frac{ 1 + \frac{\xi^{2}}{\tau^{2}} }{ \big(1 + \frac{\xi^{2}}{\tau^{2}}\big)^{2-p/2} }
\leq
\tau^{p-2}, 
\end{equation}
$f_{\rho,\beta,\gamma}$ is convex for each $0 \leq \beta < \min_{j=1,\dots,k} \frac{2 \gamma^{2-p}}{p \rho_{j}^{p}} = \frac{2\gamma^{2-p}}{p \rho_{1}^{p}}$ (by applying \eqref{equ:convexitybound_lp_technical_inequality} componentwise with $\tau = \gamma\rho_{j}$, $j=1,\dots,k$).

\vspace{1ex}

Now consider the case $2<p<\infty$.
The second-order partial derivatives of $L_{\rho,\gamma}$ for $x\neq 0$ are given by
\[
\frac{\partial^{2} L_{\rho,\gamma}}{\partial x_{l} \partial x_{m}} (x)
=
\begin{cases}
\frac{2 (p-1) |x_{l}|^{p-2}}{\norm{x}_{p}^{p-2}}
-
\frac{2 (p-2) |x_{l}|^{2p-2}}{\norm{x}_{p}^{2p-2}}
&
\text{if } l=m,
\\[2ex]
- \frac{2 (p-2)  \, x_{l} x_{m} \, |x_{l} x_{m}|^{p-2}}{\norm{x}_{p}^{2p-2}}
&
\text{if } l \not = m.
\end{cases}
\]
Hence, the Hessian of $f_{\rho,\beta,\gamma}$ for $x\neq 0$ can be written in the form
\[
\nabla^{2} f_{\rho,\beta,\gamma} (x)
=
\diag \big( ( 2\rho_{j}^{-2}
-
2 \beta (p-1) g_{j}(x) )_{j=1,\dots,k} \big)
+
2 \beta (p-2) h(x) h(x)^{\intercal},
\]
where $\diag(d_{1},\ldots,d_{k})$ denotes the $k\times k$ diagonal matrix with diagonal entries $d_{1},\ldots,d_{k}$ and the functions $g_{j} \colon \R^{k}\setminus \{0\} \to \R$, $j=1,\dots,k$, and $h\colon \R^{k}\setminus \{0\} \to \R^{k}$ are given by
\[
g_{j}(x) = \frac{|x_{j}|^{p-2}}{\norm{x}_{p}^{p-2}},
\qquad
h(x) = \bigg( \frac{ x_{j}\cdot |x_j|^{p-2}}{\norm{x}_{p}^{p-1}} \bigg)_{j=1,\dots,k}.
\]

Since $\absval{g_{j}} \leq 1$, $\nabla^{2} f_{\rho,\beta,\gamma}$ is symmetric and positive definite on the set $\R^k\setminus \{0\}$ for $0 \leq \beta < \frac{1}{(p-1)\rho_{1}^{2}}$.
In order to prove convexity, we show that for any $x,y\in \R^{k}$ and $\lambda \in [0,1]$,
\begin{equation}
\label{equ:convexity_def}
f_{\rho,\beta,\gamma}(\lambda x + (1-\lambda)y)
\leq
\lambda f_{\rho,\beta,\gamma}(x) + (1-\lambda)f_{\rho,\beta,\gamma}(y)    
\end{equation}
by considering the following three cases:
\paragraph{1. case: $x,y\neq 0$ and the line through $x$ and $y$ does not touch the origin $0\in \R^{k}$.} In this case, we can restrict the function $f_{\rho,\beta,\gamma}$ to an open half-space containing $x$ and $y$, but not containing $0\in\R^{k}$.
On this convex set, $f_{\rho,\beta,\gamma}$ is twice continuously differentiable and positive definiteness of the Hessian $\nabla^2 f_{\rho,\beta,\gamma}$ proves convexity, in particular \eqref{equ:convexity_def}.
\paragraph{2. case: $x,y\neq 0$ and the line through $x$ and $y$ contains the origin $0\in \R^{k}$.}
In this case, there exists $\lambda^\star \in (0,1)$ such that $\lambda^\star x + (1-\lambda^\star)y = 0$ and thereby $y = -\frac{\lambda^\star}{1-\lambda^\star}x$.
It follows for each $\lambda \in [0,1]$ that
\begin{align*}
    \lambda x + (1-\lambda) y &= (\lambda - \lambda^\star)x + ((1-\lambda)-(1-\lambda^\star))y + 0\\
    &= (\lambda - \lambda^\star)(x-y) = \frac{\lambda - \lambda^\star}{1-\lambda^\star}x.
\end{align*}
Since $f_{\rho,\beta,\gamma}(tx) = t^{2} f_{\rho,\beta,\gamma}(x)$ for each $t\in\R$,
\[
g(\lambda)
\coloneqq
f_{\rho,\beta,\gamma}(\lambda x + (1-\lambda) y)
=
f_{\rho,\beta,\gamma}\left(\frac{\lambda - \lambda^\star}{1-\lambda^\star}x\right)
=
\left(\frac{\lambda - \lambda^\star}{1-\lambda^\star}\right)^2 f_{\rho,\beta,\gamma}(x),
\]
which is a quadratic function in $\lambda$ with non-negative prefactor $f_{\rho,\beta,\gamma}(x) > 0$ (by \ref{item:L_proposition_nonnegativity}) and thereby convex.
Therefore, we obtain \eqref{equ:convexity_def} from
\begin{align*}
f_{\rho,\beta,\gamma}(\lambda x + (1-\lambda) y)
&=
g(\lambda \cdot 1 + (1-\lambda)\cdot 0)
\\
&\leq
\lambda g(1) + (1-\lambda) g(0)
\\
&=
\lambda f_{\rho,\beta,\gamma}(x) + (1-\lambda) f_{\rho,\beta,\gamma}(y).
\end{align*}
\paragraph{3. case: $x\neq 0$ and $y=0$}
In this case, \eqref{equ:convexity_def} follows from the previous cases by continuity:
\begin{align*}
f_{\rho,\beta,\gamma}(\lambda x + (1-\lambda) y)
&=
\lim_{t\searrow 0} f_{\rho,\beta,\gamma}(\lambda x + (1-\lambda) tx)
\\
&\leq
\lim_{t\searrow 0} \lambda f_{\rho,\beta,\gamma}(x) + (1-\lambda) f_{\rho,\beta,\gamma}(tx)
\\
&=
\lambda f_{\rho,\beta,\gamma}(x) + (1-\lambda) f_{\rho,\beta,\gamma}(y).\qedhere
\end{align*}
\end{proof}

\begin{remark}
Note that this bound on $\beta$ is not optimal. For example, for $n=2$, $p=4$ and $\rho_1=\rho_2=1$, we consider here $f_{\rho,\beta,\gamma}(x) = x^2+y^2 - \beta \sqrt{x^4+y^4}$. The lemma from above proves that this function is convex for $\beta < \frac{1}{3}$. In fact, it is convex already for $\beta < \sqrt2/3$ as can be shown by more elementary methods (exclusive to this low-dimensional setting). Note that in this specific case already $f_{\rho,\beta,\gamma}(x)\geq 0$ for $\beta \leq 1$.
\end{remark}

\begin{proposition}
\label{prop:bound_balls_lp_finite}

Under \Cref{ass:Phi,ass:general_assumption_lp} and using \Cref{notation:l_p_setting},
for each $0 < \delta < 1$, each $k \in \N \cup \{ 0 \}$, each $\gamma > 0$ and each $z\in X$,
\begin{equation}
    \ratio{z}{0}{\delta}
    \leq
    \exp\bigg( -\frac{\gamma^{2-\alpha}}{4 q \sigma_{k+1}^{\alpha}} \Big( (\norm{P_{k}z}_{p}-\delta)^{\alpha} - \gamma^{\alpha} S^{\alpha} - \delta^{\alpha} \Big) \bigg).
\end{equation}
\end{proposition}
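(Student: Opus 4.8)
The plan is to mirror the proof of \Cref{lem:bounds36_balls_dlsv}, replacing the quadratic norm extraction (which relied on $\norm{\quark}_{\cH}$ being quadratic) by the convexification provided by \Cref{prop:convexitybound_lp_all_p}, applied to the tail coordinates $j > k$. Throughout I use \Cref{notation:l_p_setting}; fix $0 < \delta < 1$, $k\in\N\cup\{0\}$, $\gamma > 0$ and $z\in X$. Since $B_{\delta}(x) = \bigcap_{K} B_{\delta}^{K}(P^{K}x)\times \R^{\N\setminus\{1,\dots,K\}}$, continuity of measures gives $\mu_{K}(B_{\delta}^{K}(P^{K}x)) \to \mu(B_{\delta}(x))$ as $K\to\infty$, so it suffices to bound the finite-dimensional ratios $\mu_{K}(B_{\delta}^{K}(P^{K}z))/\mu_{K}(B_{\delta}^{K}(0))$ and then pass to the limit. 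The negative log-density of $\mu_{K}$ is $\tfrac12\absval{P^{K}u}_{E^{K}}$, and I exploit the split $\absval{P^{K}u}_{E^{K}} = \absval{P^{K}u}_{E^{k}} + \absval{P_{k}^{K}u}_{E_{k}^{K}}$.

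For $K > k$, set $\rho \coloneqq (\sigma_{k+1},\dots,\sigma_{K})\in\R^{K-k}$, which is nonincreasing with $\rho_{1} = \sigma_{k+1}$, so that $\beta_{\ast} = \tfrac{2\gamma^{2-\alpha}}{q\sigma_{k+1}^{\alpha}}$, and choose $\beta \coloneqq \tfrac{\gamma^{2-\alpha}}{2q\sigma_{k+1}^{\alpha}} = \beta_{\ast}/4 < \beta_{\ast}$. Using $\absval{P_{k}^{K}u}_{E_{k}^{K}} = \beta L_{\rho,\gamma}(P_{k}^{K}u) + f_{\rho,\beta,\gamma}(P_{k}^{K}u)$, I factor the $\mu_{K}$-integrand as $\exp\bigl(-\tfrac{\beta}{2}L_{\rho,\gamma}(P_{k}^{K}u)\bigr)\cdot\exp\bigl(-\tfrac12[\absval{P^{K}u}_{E^{k}} + f_{\rho,\beta,\gamma}(P_{k}^{K}u)]\bigr)$. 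Bounding the first factor by its supremum over the numerator ball $B_{\delta}^{K}(P^{K}z)$ and its infimum over the denominator ball $B_{\delta}^{K}(0)$, the finite-dimensional ratio splits into an extracted $L_{\rho,\gamma}$-factor times a ratio of the remaining integrals.

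The remaining integrand is a symmetric (even) function of $u\in\R^{K}$, and it is log-concave: the head part $\absval{P^{K}u}_{E^{k}}$ is a positive-definite quadratic and $f_{\rho,\beta,\gamma}$ is convex by \Cref{prop:convexitybound_lp_all_p}\,\ref{item:L_proposition_convexity}. Hence it defines a symmetric unimodal Lebesgue density, and since $B_{\delta}^{K}(P^{K}z) = B_{\delta}^{K}(0) + P^{K}z$, Anderson's inequality (\Cref{thm:anderson}) bounds the ratio of the remaining integrals by $1$. For the extracted factor $\exp\bigl(-\tfrac{\beta}{2}[\inf_{v\in B_{\delta}^{K}(P^{K}z)}L_{\rho,\gamma}(P_{k}^{K}v) - \sup_{v\in B_{\delta}^{K}(0)}L_{\rho,\gamma}(P_{k}^{K}v)]\bigr)$, I use \Cref{prop:convexitybound_lp_all_p}\,\ref{item:L_proposition_inequalities}: for $v\in B_{\delta}^{K}(P^{K}z)$ one has $\norm{P_{k}^{K}v}_{p} > \norm{P_{k}^{K}z}_{p} - \delta$, whence $L_{\rho,\gamma}(P_{k}^{K}v) \geq (\norm{P_{k}^{K}z}_{p}-\delta)^{\alpha} - \gamma^{\alpha}\norm{\rho}_{p}^{\alpha} \geq (\norm{P_{k}^{K}z}_{p}-\delta)^{\alpha} - \gamma^{\alpha}S^{\alpha}$ (using $\norm{\rho}_{p}\leq S$), while for $v\in B_{\delta}^{K}(0)$ one has $L_{\rho,\gamma}(P_{k}^{K}v) \leq \norm{P_{k}^{K}v}_{p}^{\alpha} < \delta^{\alpha}$. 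Since $\tfrac{\beta}{2} = \tfrac{\gamma^{2-\alpha}}{4q\sigma_{k+1}^{\alpha}}$, letting $K\to\infty$ (so that $\norm{P_{k}^{K}z}_{p}\to\norm{P_{k}z}_{p}$) yields the claimed bound.

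The genuinely hard work is entirely absorbed into \Cref{prop:convexitybound_lp_all_p}; given it, the remaining difficulty is bookkeeping. The main subtleties I expect are: choosing $\rho = (\sigma_{k+1},\dots,\sigma_{K})$ so that $\rho_{1} = \sigma_{k+1}$ (this is what produces the prefactor $\sigma_{k+1}^{-\alpha}$ and forces the admissible range $\beta < \beta_{\ast}$); checking that the residual density is symmetric and log-concave, so that Anderson's inequality genuinely applies to the non-Gaussian remainder; and justifying the interchange of the finite-dimensional truncation with the limit $K\to\infty$. One caveat worth flagging is the regime $\norm{P_{k}z}_{p} < \delta$, in which $(\norm{P_{k}z}_{p}-\delta)^{\alpha}$ must be read appropriately: the lower bound $\norm{P_{k}^{K}v}_{p}^{\alpha} \geq (\norm{P_{k}^{K}z}_{p}-\delta)^{\alpha}$ uses monotonicity of $t\mapsto t^{\alpha}$ on $[0,\infty)$ and is only needed (and only informative) when $\norm{P_{k}z}_{p}\geq\delta$, which is exactly the regime in which the estimate is later applied.
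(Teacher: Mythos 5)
Your proof is correct and follows essentially the same route as the paper's: finite-dimensional truncation, extraction of an $\exp(-cL_{\rho,\gamma}(P_k^K\cdot))$ factor with $\rho=(\sigma_{k+1},\dots,\sigma_K)$, Anderson's inequality applied to the remaining symmetric log-concave density, and the bounds from \Cref{prop:convexitybound_lp_all_p}\,\ref{item:L_proposition_inequalities}. The only (immaterial) difference is bookkeeping: you take $\beta=\beta_\ast/4$ and extract $\exp(-\tfrac{\beta}{2}L)$, while the paper takes $\beta=\beta_\ast/2$ and extracts $\exp(-\tfrac{\beta}{4}L)$, yielding the same constant $\tfrac{\gamma^{2-\alpha}}{4q\sigma_{k+1}^{\alpha}}$.
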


\begin{proof}
Let $\beta \coloneqq \frac{\gamma^{2-\alpha}}{q \sigma_{k+1}^{\alpha}}$.
Let $K\in\N$ and $\rho = (\sigma_{k+1},\dots,\sigma_{K})$.

Observe that the function $f: \R^k\to \R$ defined by
\[
f(u)
=
\exp\big(  - \tfrac14 |u|_{E^{K}}^2 - \tfrac14 |u|_{E^{k}}^2 - \tfrac14 (|u|_{E_{k}^{K}}^2 - \beta L_{\rho,\gamma}(P_{k}^{K}u)) \big)
\]

is positive, symmetrical, integrable (since $f(u) \leq \exp (  - \tfrac14 |u|_{E^{K}}^2 )$ by \Cref{prop:convexitybound_lp_all_p} \ref{item:L_proposition_nonnegativity}) and log-concave (by \Cref{prop:convexitybound_lp_all_p} \ref{item:L_proposition_convexity}).
Hence, by \Cref{prop:convexitybound_lp_all_p} \ref{item:L_proposition_inequalities}, \ref{item:L_proposition_convexity} and Anderson's inequality (\Cref{thm:anderson}),
\begin{align*}
    \frac{\mu_{K}(B_\delta^{K}(z))}{\mu_{K}(B_\delta^{K}(0))}
    &=
    \frac{\int_{B_\delta^{K}(z)} \exp\big(-\tfrac12 |u|_{E^{K}}^2 \big) \d u}
    {\int_{B_\delta^{K}(0)} \exp\big(-\tfrac12 |u|_{E^{K}}^2\big) \d u}
    \\
    &\leq
    \frac{\sup_{v \in B_\delta^{K}(z)} \exp\big(-\tfrac{\beta}{4} L_{\rho,\gamma}(P_{k}^{K}v) \big)}
    {\inf_{v \in B_\delta^{K}(0)} \exp\big(-\tfrac{\beta}{4} L_{\rho,\gamma}(P_{k}^{K}v) \big)}
    \,
    \frac{\int_{B_\delta^{K}(z)} f(u) \d u}
    {\int_{B_\delta^{K}(0)} f(u) \d u}
    \\
    &\leq
    \exp\bigg( -\tfrac{\beta}{4} \Big( \inf_{v \in B_\delta^{K}(z)} ( \norm{P_{k}^{K}v}_{p}^{\alpha} - \gamma^{\alpha}\norm{\rho}_{p}^{\alpha}) - \sup_{v \in B_\delta^{K}(0)} \norm{P_{k}^{K}v}_{p}^{\alpha} \Big) \bigg)
    \\
    &\leq
    \exp\bigg( -\frac{\gamma^{2-\alpha}}{4 q \sigma_{k+1}^{\alpha}} \Big( (\norm{P_{k}^{K}z}_{p}-\delta)^{\alpha} - \gamma^{\alpha} S^{\alpha} - \delta^{\alpha} \Big) \bigg).
\end{align*}
For any $x\in X$, since $B_{\delta}(x) = \bigcap_{k\in\N} B_{\delta}^{k} (P^kx) \times \R^{\N \setminus \{ 1,\dots,k \} }$, the continuity of measures implies that $\mu_{k} (B_{\delta}^{k} (P^kx)) \to \mu(B_{\delta}(x))$.
Therefore, taking the limit $K\to\infty$ proves the claim.
\end{proof}

\begin{corollary}
\label{cor:C1_for_lp}
Under \Cref{ass:Phi,ass:general_assumption_lp} the vanishing condition for unbounded sequences, \Cref{cond:main_conditions} \ref{item:main_conditions_general_decay}, is satisfied.
\end{corollary}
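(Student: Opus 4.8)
The plan is to mirror the Hilbert-space argument following \Cref{lem:bounds36_balls_dlsv}, using \Cref{prop:bound_balls_lp_finite} as the $\ell^{p}$ analogue of that lemma. Concretely, I would fix $\gamma = 1$ and apply \Cref{prop:bound_balls_lp_finite} with the choice $k = 0$, so that $P_{0} = \mathrm{Id}$ and $\sigma_{k+1} = \sigma_{1}$. This yields, for every $z\in X$ and every $0 < \delta < 1$,
\[
\ratio{z}{0}{\delta}
\leq
\exp\bigg( -\frac{1}{4 q \sigma_{1}^{\alpha}} \Big( (\norm{z}_{p}-\delta)^{\alpha} - S^{\alpha} - \delta^{\alpha} \Big) \bigg),
\]
which now involves the full ambient norm $\norm{z}_{p} = \norm{P_{0}z}_{p}$ and plays exactly the role that the $n=1$ case of \Cref{lem:bounds36_balls_dlsv} played in the Hilbert setting.

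Next, to verify \Cref{cond:main_conditions} \ref{item:main_conditions_general_decay}, I would show that for every $\eps > 0$ and every $m\in\N$ there exists $m^{\star}\geq m$ with $\ratio{x_{m^{\star}}}{0}{\delta_{m^{\star}}} \leq \eps$; since $m$ is arbitrary this gives $\liminf_{m\to\infty}\ratio{x_{m}}{0}{\delta_{m}} = 0$. Given $\eps$ and $m$, I would first choose a threshold $M \geq 2$ large enough that
\[
(M-1)^{\alpha} - S^{\alpha} - 1
\geq
4 q \sigma_{1}^{\alpha}\, \log(1/\eps),
\]
which is possible because $\alpha = \min(p,2) \geq 1$ and $S = (\sum_{j}\sigma_{j}^{p})^{1/p}$ is finite by \Cref{ass:general_assumption_lp}. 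Using that $(\delta_{m})_{m\in\N}$ is a null sequence, I would pick $m_{1}\geq m$ with $\delta_{n} < 1$ for all $n\geq m_{1}$, and then use the unboundedness of $(x_{m})_{m\in\N}$ to pick $m^{\star}\geq m_{1}$ with $\norm{x_{m^{\star}}}_{p} \geq M$. Feeding $z = x_{m^{\star}}$ and $\delta = \delta_{m^{\star}}$ into the displayed bound and using $(\norm{x_{m^{\star}}}_{p} - \delta_{m^{\star}})^{\alpha} \geq (M-1)^{\alpha}$ together with $\delta_{m^{\star}}^{\alpha} \leq 1$, the exponent drops below $\log\eps$, giving $\ratio{x_{m^{\star}}}{0}{\delta_{m^{\star}}} \leq \eps$.

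I expect no serious obstacle here, since \Cref{prop:bound_balls_lp_finite} already absorbs all the analytic difficulty (the convexification of \Cref{prop:convexitybound_lp_all_p} and Anderson's inequality). The only points requiring a little care, relative to the Hilbert-space corollary, are the additive constant $S^{\alpha}$ appearing in the exponent --- harmless precisely because $S < \infty$ under \Cref{ass:general_assumption_lp} --- and the need to keep $\norm{z}_{p} - \delta \geq 0$ so that $(\norm{z}_{p}-\delta)^{\alpha}$ is well behaved and monotone, which is automatic once $\norm{x_{m^{\star}}}_{p} \geq M \geq 2 > \delta$. Both are handled simply by taking $M$ sufficiently large, so the quantitative estimate carries through essentially verbatim from the Hilbert case.
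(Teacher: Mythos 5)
Your proposal is correct and matches the paper's proof: both apply \Cref{prop:bound_balls_lp_finite} with $\gamma=1$ and $k=0$ and then let unboundedness drive the exponent to $-\infty$ along a subsequence. The only cosmetic difference is that you carry out explicit $\eps$--$M$ bookkeeping, whereas the paper simply extracts a subsequence with $\norm{x_{m_n}}_p\to\infty$ and passes to the limit.
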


\begin{proof}
We use \Cref{notation:l_p_setting} throughout the proof.
Let $(\delta_{m})_{m \in \N}$ be a null sequence in $\R^{+}$ and $(x_{m})_{m \in \N}$ be an unbounded sequence, i.e.\ there exists a subsequence $(x_{m_{n}})_{n \in \N}$ such that $\norm{x_{m_{n}}}_{p} \to \infty$ as $n\to \infty$.
Using \Cref{notation:l_p_setting} and \Cref{prop:bound_balls_lp_finite} with $\gamma=1$ and $k=0$ we obtain
\[
\ratio{x_{m_n}}{0}{\delta_{m_n}}
\leq
\exp\bigg( -\frac{1}{4 q \sigma_{1}^{\alpha}} \Big( (\norm{x_{m_{n}}}_{p}-\delta_{m_n})^{\alpha} - S^{\alpha} - \delta_{m_n}^{\alpha} \Big) \bigg)
\xrightarrow[n\to\infty]{}
0,
\]
proving the claim.
\end{proof}

\begin{corollary}
\label{cor:existence_weak_limit}
Under \Cref{ass:Phi,ass:general_assumption_lp} the weakly convergent subsequence condition, \Cref{cond:main_conditions} \ref{item:main_conditions_existence_weak_limit}, is satisfied.

\end{corollary}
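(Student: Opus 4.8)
The plan is to split the argument according to whether $p>1$ or $p=1$, since the obstruction is entirely a feature of the non-reflexive case $\ell^1$. For $1<p<\infty$ the space $\ell^p$ is reflexive, so every bounded sequence in $X$ already admits a weakly convergent subsequence (by Banach--Alaoglu together with reflexivity); the ball-mass hypothesis $\Ratio{x_m}{0}{\delta_m}{\mu}\ge K$ plays no role here. All the work is in the case $X=\ell^1$, where bounded sequences may fail to have weakly convergent subsequences---the canonical basis $(e_m)_{m\in\N}$ being the standard example---so the lower bound on the ball mass must be used to rule out such sequences that ``escape into the tail''.

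For $p=1$ I would begin by specializing \Cref{prop:bound_balls_lp_finite}: here $\alpha=1$, $q=1$ and $\sigma_{k+1}^{\alpha}=\sigma_{k+1}$, so combining that proposition with the hypothesis $\ratio{x_m}{0}{\delta_m}\ge K$ and taking logarithms yields, for every $\gamma>0$, every $k\in\N$ and every $m$ (with $\delta_m<1$),
\[
\norm{P_k x_m}_1 \leq \gamma S + 2\delta_m + \frac{4\sigma_{k+1}}{\gamma}\log(1/K),
\]
where we may assume $K\le 1$, the opposite case only strengthening the bound. The decisive observation is that the right-hand side is small \emph{uniformly in $m$}: given $\eps>0$, first pick $\gamma$ with $\gamma S\le \eps/4$; then, since $\sigma_{k+1}\searrow 0$, pick $k$ with $\tfrac{4\sigma_{k+1}}{\gamma}\log(1/K)\le\eps/4$; and since $(\delta_m)_{m\in\N}$ is a null sequence, $2\delta_m\le\eps/4$ for all but finitely many $m$. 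The finitely many exceptional indices (including any with $\delta_m\ge1$) are harmless, since each $x_m\in\ell^1$ has individually small tails. This establishes the uniform tightness
\[
\lim_{N\to\infty} \sup_{m\in\N} \norm{P_N x_m}_1 = 0,
\]
which I expect to be the main obstacle and the only place where the quantitative decay of \Cref{prop:bound_balls_lp_finite}, rather than soft compactness, is genuinely needed.

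With tightness established, the subsequence can be produced directly. Boundedness of $(x_m)_{m\in\N}$ in $\ell^1$ together with a diagonal extraction gives a subsequence (not relabelled) converging componentwise to some $\bar z\in\R^{\N}$, and Fatou's lemma gives $\bar z\in\ell^1$. To upgrade componentwise convergence to norm convergence I would split $\norm{x_m-\bar z}_1=\norm{P^N(x_m-\bar z)}_1+\norm{P_N(x_m-\bar z)}_1$: for fixed $N$ the first, finite, sum tends to $0$ as $m\to\infty$ by componentwise convergence, while the second is bounded by $\sup_m\norm{P_N x_m}_1+\norm{P_N\bar z}_1$, which tightness and $\bar z\in\ell^1$ render arbitrarily small once $N$ is large. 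Hence $x_m\to\bar z$ strongly, and in particular weakly, in $\ell^1$. This is consistent with the Schur property of $\ell^1$---weak convergence forces norm convergence---which is precisely why a mere boundedness argument cannot suffice and the tightness estimate above is unavoidable.
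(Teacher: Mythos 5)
Your proof is correct, and it follows the same skeleton as the paper's --- reflexivity disposes of $p>1$, and for $p=1$ the ball-mass lower bound is fed into \Cref{prop:bound_balls_lp_finite} (with $\alpha=q=1$) to control the tails $\norm{P_k x_m}_1$ uniformly in $m$, which is exactly the paper's notion of the sequence being ``equismall at infinity''. You differ in two ways, both to your advantage in terms of self-containedness. First, where the paper establishes tightness by contradiction (assuming some $r>0$ and indices $m_k$ with $\norm{P_k x_{m_k}}_1\ge r$, showing $(m_k)$ is unbounded, and then invoking the proposition with $\gamma = r/(4\sum_j\sigma_j)$ to contradict $\ratio{x_{m_k}}{0}{\delta_{m_k}}\ge K$), you simply solve the inequality of \Cref{prop:bound_balls_lp_finite} for $\norm{P_k x_m}_1$ and optimize over $\gamma$ and $k$ directly; your handling of the finitely many indices with $2\delta_m>\eps/4$ or $\delta_m\ge 1$ (where the proposition does not apply) via the individual summability of each $x_m$ is the right fix. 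Second, where the paper concludes by citing \citep[Theorem 44.2]{treves2016topological} (boundedness plus equismallness at infinity implies weak sequential compactness in $\ell^1$), you give an elementary diagonal extraction, Fatou, and a two-block norm splitting that yields a \emph{strongly} convergent subsequence --- a stronger conclusion than \Cref{cond:main_conditions} \ref{item:main_conditions_existence_weak_limit} requires, and one that is consistent with the Schur property of $\ell^1$ as you note. The paper's route is shorter because it outsources the compactness step; yours is longer but avoids the external reference and makes explicit why the quantitative decay, rather than soft compactness, is the essential input.
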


\begin{proof}
We use \Cref{notation:l_p_setting} throughout the proof.
If $p>1$, the statement follows directly from the reflexivity of $X = \ell^{p}$.
Now let $p=1$, let $(\delta_{m})_{m \in \N}$ be a null sequence in $(0,1)$ and $(x_m)_{m\in\N}$ be a bounded sequence in $X$ satisfying, for some $K>0$ and each $m\in\N$, $\ratio{x_m}{0}{\delta_m}\geq K$.

We first show that $(x_{m})_{m\in \N}$ is equismall at infinity, i.e.\ for every $r>0$ there exists $k\in \N$ such that, for each $m \in \N$, $\|P_k x_{m}\|_1 < r$.
Assuming the contrary, there exists $r>0$ such that, for any $k\in\N$, there exist $m_k \in \N$ such that $\|P_k x_{m_k}\|_1 \geq r$.

If the sequence $(m_{k})_{k\in\N}$ was bounded by some $N\in\N$, then, using the fact that $\lim_{k\to\infty}\|P_k x\|_1 = 0$ for any (fixed) $x\in X$,
\[
r
\leq
\limsup_{k\to\infty } \|P_k x_{n_k}\|_1
\leq
\lim_{k\to\infty}\sup_{n=1,\ldots, N} \|P_k x_{n}\|_1
=
0
<
r.
\]
Since this is a contradiction, $(m_{k})_{k\in\N}$ is unbounded.
Using $\sigma_{k} \searrow 0$ and $\delta_{k} \searrow 0$ as $k\to\infty$, this implies the existence of $k\in\N$ such that $\delta_{m_{k}} \leq r/8$ and \[\exp\left(-\frac{r^2}{32\sigma_{k+1} \sum_{j\in\N}\sigma_j}\right) < K.\]
Using \Cref{prop:bound_balls_lp_finite} with $\gamma \coloneqq \frac{r}{4\sum_{j\in\N} \sigma_j}$ we obtain

\begin{align*}
\ratio{x_{m_k}}{0}{\delta_{m_k}}
&\leq
\exp \left( -\frac{\gamma}{4\sigma_{k+1}}
\left(\|P_k x_{m_k}\|_1-2{\delta_{m_k}}-\gamma \sum_{j\in\N} \sigma_j \right)
\right)
\\
&\leq
\exp\left( -\frac{r}{16\sigma_{k+1} \sum_{j\in\N} \sigma_j } \left(r-\tfrac{r}{4}-\tfrac{r}{4} \right) \right)
\\
&\leq
\exp\left( -\frac{r^2}{32 \sigma_{k+1} \sum_{j\in\N} \sigma_j}\right)
\\
&<
K,
\end{align*}
contradicting the assumption $\ratio{x_m}{0}{\delta_m}\geq K$ for each $m\in\N$.

Hence, $(x_{m})_{m\in\N}$ is equismall at infinity and, combined with its boundedness, this implies the existence of a weakly convergent subsequence of $(x_{m})_{m\in\N}$ by \citep[Theorem 44.2]{treves2016topological}.
\end{proof}

\begin{corollary}
\label{cor:C3_for_lp}
Under \Cref{ass:Phi,ass:general_assumption_lp} the vanishing condition for weakly, but not strongly convergent sequences, \Cref{cond:main_conditions} \ref{item:main_conditions_weak_not_strong}, is satisfied.

\end{corollary}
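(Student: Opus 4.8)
The plan is to split the argument according to whether $p=1$ or $1 < p < \infty$, since the behaviour of weak convergence in $\ell^p$ differs sharply between the two regimes. For $p=1$ the claim is vacuous: by Schur's property of $\ell^1$, every weakly convergent sequence already converges strongly, so there is no sequence $(x_m)_{m\in\N}$ converging weakly but not strongly, and \Cref{cond:main_conditions} \ref{item:main_conditions_weak_not_strong} holds trivially. This is exactly the point where the Hilbert-space argument of \Cref{lem:newlemma39_hilbert}, which invokes the Radon--Riesz property, fails to transfer verbatim to $\ell^1$ (cf.\ the discussion in \Cref{section:Mistakes_Dashti}) --- but here the even stronger Schur property renders the hypothesis empty.

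For $1 < p < \infty$ I would first exploit the uniform convexity of $\ell^p$, which supplies the Radon--Riesz (Kadec--Klee) property used in \Cref{lem:newlemma39_hilbert}. Since $x_m \rightharpoonup \bar z$ but not strongly, and weak lower semicontinuity of the norm gives $\norm{\bar z}_{p} \leq \liminf_{m} \norm{x_m}_{p}$, the Radon--Riesz property forces $\limsup_{m} \norm{x_m}_{p} > \norm{\bar z}_{p}$. Hence there are $c > 0$ and a subsequence, relabelled $(x_m)_{m\in\N}$, with $\norm{x_m}_{p} \geq \norm{\bar z}_{p} + c$ for all $m$.

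The key step is to turn this into a uniform lower bound on the tails, which is the quantity controlled by \Cref{prop:bound_balls_lp_finite}. Using the additive identity $\norm{x}_{p}^{p} = \norm{P^k x}_{p}^{p} + \norm{P_k x}_{p}^{p}$ for the coordinate projections, together with the fact that weak convergence forces componentwise and hence finite-dimensional strong convergence of the heads, $\norm{P^k x_m}_{p} \to \norm{P^k \bar z}_{p}$ as $m\to\infty$ for each fixed $k$, I would deduce
\[
\liminf_{m\to\infty} \norm{P_k x_m}_{p}^{p}
\geq
(\norm{\bar z}_{p} + c)^{p} - \norm{P^k \bar z}_{p}^{p}
\geq
(\norm{\bar z}_{p} + c)^{p} - \norm{\bar z}_{p}^{p}
=: \tilde c^{\,p}
>
0,
\]
a positive lower bound that is, crucially, independent of $k$.

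Finally I would feed this into \Cref{prop:bound_balls_lp_finite}. The essential difference from the Hilbert case (\Cref{lem:bounds36_balls_dlsv}) is the constant $S$ appearing in the bound: the exponent
\[
-\frac{\gamma^{2-\alpha}}{4 q \sigma_{k+1}^{\alpha}}\Big( (\norm{P_k x_m}_{p} - \delta_m)^{\alpha} - \gamma^{\alpha} S^{\alpha} - \delta_m^{\alpha} \Big)
\]
only yields decay once the bracket is positive, so a bare positive tail does not suffice --- the tail term must beat $\gamma^{\alpha} S^{\alpha}$. I would therefore \emph{first} fix $\gamma > 0$ small enough that $\gamma^{\alpha} S^{\alpha}$ is a small fraction of $\tilde c^{\,\alpha}$, and then, with $\gamma$ frozen, choose $k$ so large that $\sigma_{k+1}$ is tiny, making the prefactor arbitrarily large (for $1 < p < 2$, where $\alpha = p$ and $\gamma^{2-p}$ shrinks with $\gamma$, this shrinkage is harmless since it is over-compensated by taking $k$ large). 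For this fixed $k$, the tail estimate and $\delta_m \searrow 0$ let me select arbitrarily large $m$ with $\norm{P_k x_m}_{p}$ close to its liminf value $\geq \tilde c$ and $\delta_m$ small, so that the bracket exceeds a fixed positive multiple of $\tilde c^{\,\alpha}$ and the exponent drops below $\log\eps$ for any prescribed $\eps>0$. This yields a subsequence along which $\ratio{x_m}{0}{\delta_m}\to 0$, and as the ratio is non-negative, $\liminf_{m\to\infty}\ratio{x_m}{0}{\delta_m}=0$. The main obstacle is precisely this interplay: unlike the clean Hilbert estimate, decay cannot be read off from an arbitrarily small positive tail, so the two free parameters $\gamma$ (tuned first, to neutralise $S$) and $k$ (tuned second, to exploit $\sigma_{k+1}\to 0$) must be balanced in the correct order; the secondary subtlety is the structural dichotomy at $p=1$, where Radon--Riesz fails but the Schur property makes the statement vacuous.
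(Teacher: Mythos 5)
Your proof is correct, but it reaches the key intermediate fact --- a lower bound on the tail norms $\norm{P_k x_m}_p$ that is uniform in $k$ --- by a genuinely different route than the paper. You split into $p=1$ (vacuous by the Schur property of $\ell^1$) and $1<p<\infty$ (Radon--Riesz via uniform convexity, giving a norm gap $\limsup_m \norm{x_m}_p > \norm{\bar z}_p + c$, converted into a tail bound through the additive identity $\norm{x}_p^p = \norm{P^k x}_p^p + \norm{P_k x}_p^p$). The paper instead treats all $p\in[1,\infty)$ uniformly: starting from $\limsup_m\norm{x_m-\bar z}_X > A$, it applies the triangle inequality $\norm{x_m-\bar z}_X \le \norm{P^k(x_m-\bar z)}_X + \norm{P_k x_m}_X + \norm{P_k\bar z}_X$, using componentwise convergence for the head and the hypothesis $\bar z\in E$ (so $\norm{P_k\bar z}_X\to 0$) for the middle term, to conclude $\limsup_m\norm{P_k x_m}_X > A/2$ for all large $k$. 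Your route buys a slightly stronger tail bound (a $\liminf$ rather than a $\limsup$, and no use of $\bar z\in E$), at the price of invoking space-specific functional-analytic properties (Schur, Kadec--Klee) and a case distinction; the paper's argument is more elementary and portable to other sequence spaces, which matters given that the authors explicitly fault \citet{dashti2013map} for tacitly assuming Radon--Riesz. The final quantitative step is essentially identical in both proofs: apply \Cref{prop:bound_balls_lp_finite} after first fixing $\gamma$ proportional to $c/S$ so that $\gamma^\alpha S^\alpha$ is dominated by the tail term, and only then choosing $k$ large enough that the prefactor $\gamma^{2-\alpha}/(4q\sigma_{k+1}^\alpha)$ beats $-\log\eps$; your remark about the order of these two choices matches the paper's parameter selection exactly. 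The only point requiring care in a write-up is the choice of explicit constants in the bracket $(\norm{P_k x_m}_p-\delta_m)^\alpha - \gamma^\alpha S^\alpha - \delta_m^\alpha$, since for $\alpha=1$ overly generous fractions of $\tilde c$ can make the bracket degenerate to zero, but your "small fraction" phrasing leaves enough room for this.
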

\begin{proof}
We use \Cref{notation:l_p_setting} throughout the proof.
Let $(\delta_{m})_{m \in \N}$ be a null sequence in $\R^{+}$ and $(x_{m})_{m \in \N}$ be a weakly, but not strongly convergent sequence in $X$ with weak limit $\bar z \in E$,
\vspace{2ex}

\textbf{Step 1:} There exists a $c>0$ and $k_0\in\N$ such that, for any $k\geq k_0$,
\[\limsup_{m \to \infty} \norm{P_{k} x_{m}}_{X} > c.\]

There exists $A>0$ such that $\limsup_{m \to \infty} \norm{x_{m}-\bar{z}}_{X} > A$ (otherwise the convergence would be strong).
Let $c \coloneqq \tfrac{A}{2}$.
Since $\bar{z} \in E$, we have $\absval{P_{k}\bar{z}}_{E} \to 0$ as $k\to\infty$ by \Cref{lemma:CM_lp} and therefore $\norm{P_{k}\bar{z}}_{X} \to 0$ as $k\to\infty$ by continuous embedding $E\subset X$ \citep[Proposition 2.4.6]{bogachev1998gaussian}.
Hence, there exists $k_{0} \in \N$ such that, for each $k\geq k_{0}$, $\norm{P_{k}\bar{z}}_{X} < c$.
Let $k\geq k_{0}$ and assume the contrapositive, i.e.\ $\limsup_{m \to \infty} \norm{P_{k}x_{m}}_{X} \leq c$.
But then, since weak convergence implies componentwise convergence,
\begin{align*}
2c
=
A
&<
\limsup_{m \to \infty} \norm{x_{m}-\bar{z}}_{X}
=
\limsup_{m \to \infty} \norm{P^{k}(x_{m}-\bar{z}) + P_{k}x_{m} - P_{k}\bar{z}}_{X}
\\
&\leq
\underbrace{\limsup_{m \to \infty} \norm{P^{k}(x_{m}-\bar{z})}_{X}}_{=0\text{ by weak conv.}} +
\underbrace{\limsup_{m \to \infty} \norm{P_{k}x_{m}}_{X}}_{\leq c \text{ by assumption}} +
\underbrace{\norm{P_{k}\bar{z}}_{X}}_{<c \text{ since } k \geq k_{0}}
\\
&<
2c,
\end{align*}
which is a contradiction, proving the claim.

\vspace{2ex}

\textbf{Step 2:} For each $0 < \eps < 1$, $\liminf_{m \to \infty} \ratio{x_m}{0}{\delta_m} < \varepsilon$.

\vspace{2ex}

Let $0 < \eps < 1$, $\delta_{0} \coloneqq \tfrac{c}{4}$, $\gamma \coloneqq \tfrac{c}{4S}$ and $k \geq k_{0}$ such that
\[
\sigma_{k+1}
<
\left( \frac{c^2}{4^{4-\alpha} S^{2-\alpha} q (-\log \eps) } \right)^{1/\alpha}.
\]
Let $m_{0} \in \N$.
Using Step 1, there exists $m\geq m_{0}$ such that $\delta_{m} < \delta_{0} = \tfrac{c}{4}$ and $\norm{P_{k} x_{m}}_{X} > c$.
Since $\frac{3^{\alpha}-2}{4^{\alpha}} \geq \frac{1}{4}$ for $1\leq \alpha \leq 2$, and by setting $\gamma = \tfrac{c}{4S}$, \Cref{prop:bound_balls_lp_finite} implies
\begin{align*}
\ratio{x_m}{0}{\delta_m}
&\leq
\exp\bigg( -\frac{\gamma^{2-\alpha}}{4 q \sigma_{k+1}^{\alpha}} \Big( (\norm{P_{k}x_{m}}_{p}-\delta_{m})^{\alpha} - \gamma^{\alpha} S^{\alpha} - \delta_{m}^{\alpha} \Big) \bigg)
\\
&\leq
\exp\bigg( -\frac{(\tfrac{c}{4S})^{2-\alpha}}{4 q \sigma_{k+1}^{\alpha}} \Big( \big(\tfrac{3c}{4}\big)^{\alpha} - \big(\tfrac{c}{4}\big)^{\alpha} - \big(\tfrac{c}{4}\big)^{\alpha} \Big) \bigg)
\\
&\leq
\exp\bigg( -\frac{c^{2}}{4^{4-\alpha} S^{2-\alpha} q \sigma_{k+1}^{\alpha}} \bigg)
\\
&<
\eps.\qedhere
\end{align*}
\end{proof}

\begin{proof}[Proof of \Cref{thm:main_lp}]
By \Cref{lemma:C2_for_lp,cor:C1_for_lp,cor:existence_weak_limit,cor:C3_for_lp}, \Cref{cond:main_conditions} \ref{item:main_conditions_general_decay} -- \ref{item:main_conditions_weak_not_strong} are fulfilled and all statements follow from \Cref{thm:main_general}.

\end{proof}

\section{Conclusion}
We proved the existence of MAP estimators in the context of a Bayesian inverse problem for parameters in a separable Banach space $X$, where $X$ is either a Hilbert space or $X=\ell^p$, $p\in [1,\infty)$, with a diagonal Gaussian prior.
The Hilbert space case had been proven before by \citep{dashti2013map,kretschmann2019nonparametric}, however, they did not show the existence of the central object in their proofs, namely the $\delta$-ball maximizers $z^{\delta} = \argmax_{z\in X} \mu^{y}(B_{\delta}(z))$.
We fixed this gap by working with an asymptotic maximizing family (AMF) $(\zeta^{\delta})_{\delta > 0} \subset X$ defined by \Cref{def:asymptoticmaximizer} and strongly simplified their proof by employing \citep[Proposition 1.3.11]{da2002second}, restated in \Cref{prop:normextraction}.
We decided to present this elegant and simple proof even though the Hilbert space case can be understood as a special case of $X = \ell^{p}$ for $p=2$.
The case $p\neq 2$, on the other hand, turned out to require novel techniques to prove the corresponding results. The crucial mathematical argument in this case relies on a convexification of the difference $|\quark|_E^2 - \beta \|\quark\|_X^2$ (\Cref{prop:convexitybound_lp_all_p}). This allows to extract a suitable ``rate of contraction'' such that the ratio $\ratio{z}{0}{\delta}$ can be bounded for any fixed $\delta > 0$ by a function decaying exponentially in $\norm{z}_{X}$ (\Cref{prop:bound_balls_lp_finite}).

We have also outlined a general proof strategy in \Cref{section:Main_results} how similar results (i.e.\ \Cref{conj:main_statement}) can be obtained for further separable Banach spaces.
For this purpose, we filtered out four crucial conditions, namely \Cref{cond:main_conditions} \ref{item:main_conditions_general_decay}---\ref{item:main_conditions_weak_not_strong}, which need to be proven in the Banach space of interest, and then the corresponding result follows almost immediately from \Cref{thm:main_general}.

Note that our results rely strongly on the characteristics of the $\ell^{p}$ norm and the diagonal structure of the covariance matrix of the Gaussian measure.
We suspect that the generalization to Gaussian measures on arbitrary separable Banach spaces requires deeper insight into the compatibility between the ambient space's geometry and the Cameron--Martin norm.
We hope that our \Cref{thm:main_general} paves the way for future research in this direction.
\appendix

\section{Gaussian measures in Banach spaces}

\label{section:Technical_details}

In notation, we will mainly follow \citep{bogachev1998gaussian}.
The continuous (or topological) dual space of $X$ is denoted by $X^\star$, while $X'$ denotes its algebraic dual.
In some cases, we will assume that $X$ is a Hilbert space, in which case we write $X = \cH$ for clarity. The object $\mu$ will always be a centred Gaussian measure on $X$ (or $\cH$).
We denote the Cameron--Martin space by $(E,\innerprod{\quark}{\quark}_{E})$, where we write the Cameron--Martin norm with single bars in order to differentiate it from the ambient space norm: $\absval{u}_{E} \coloneqq \sqrt{\innerprod{u}{u}_{E}}$.

It turns out that the extension of the covariance operator
\[
R_\mu \colon X^\star \to (X^{\star})',
\qquad
(R_{\mu} f)(g) \coloneqq \langle f, g \rangle_{L^{2}(\mu)}
\]
to the reproducing kernel Hilbert space (RKHS) $X_{\mu}^{\star} \coloneqq \overline{X^\star}^{L^2(X,\mu)}$ of $\mu$ satisfies $R_\mu (X_{\mu}^\star) = E$ \citep[Theorem 3.2.3]{bogachev1998gaussian}, where $E$ is viewed as a subspace of $(X^{\star})'$.
In addition, $R_\mu\colon (X_\mu^\star, \langle \quark,\quark \rangle_{L^{2}(\mu)}) \to (E,\langle \quark,\quark\rangle_E)$ is an isometric isomorphism \citep[page 60]{bogachev1998gaussian} and satisfies the \emph{reproducing property}
\begin{equation}
    \label{eq:reproprop}
    f(h) = \langle R_\mu f, h\rangle_E,
    \qquad f\in X_\mu^\star,
    \quad h\in E,
\end{equation}
which follows from the above and from treating $h = R_{\mu}g$ (for some $g\in X_{\mu}^{\star}$) as an element of $(X^{\star})'$:
\[
f(h)
=
f(R_{\mu}g)
=
(R_{\mu} g)(f)
=
\langle f, g\rangle_{L^{2}(\mu)}
=
\langle R_\mu f, R_{\mu}g \rangle_E
=
\langle R_\mu f, h\rangle_E.
\]

\begin{remark}
In the special case where the measure is defined on a Hilbert space $\cH$, the covariance operator $R_\mu$ takes the form of a self-adjoint, non-negative trace-class operator: $R_\mu = Q$ where
\[
Q: Q^{-1/2}(X) = X_\mu^\star \to E = Q^{1/2} X.
\]
In addition, the CM inner product and norm take the form
\begin{equation}
\label{equ:E_norm_H_norm_Hilbert}
\langle u, v\rangle_E
=
\langle Q^{-1/2}u, Q^{-1/2}v\rangle_{\cH},
\qquad
|u|_{E}
=
\norm{Q^{-1/2} u}_{\cH}.\qedhere
\end{equation}
\end{remark}

A result we are going to use in this context is the following technical lemma:

\begin{restatable}[]{lemma}{lemweakly}
\label{lem:limsupballs}
Let $X$ be a separable Banach space and $\mu$ a centred Gaussian measure on $X$, $\bar z\in E$ and $x^\delta \rightharpoonup \bar z$ weakly in $X$. Then
\[\limsup_{\delta\to 0} \ratio{x^\delta}{\bar z}{\delta}\leq 1.\qedhere\]
\end{restatable}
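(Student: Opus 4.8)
The plan is to strip the statement down, via the Cameron--Martin theorem, to a ratio of integrals of a single exponential weight over two nearby balls, and then to replace the (generally discontinuous) measurable linear functional attached to $\bar z$ by a genuine continuous functional, for which the weak convergence $x^\delta\rightharpoonup\bar z$ can be exploited. Concretely, I would set $w^\delta := x^\delta-\bar z\rightharpoonup 0$ and let $\hat{\bar z}\in X_\mu^\star$ denote the measurable linear functional with $R_\mu\hat{\bar z}=\bar z$, which exists precisely because $\bar z\in E$. Translating both balls by $-\bar z$ is a legitimate Cameron--Martin shift (as $\bar z\in E$), and a common prefactor cancels, leaving
\[
\ratio{x^\delta}{\bar z}{\delta}
=
\frac{\int_{B_\delta(w^\delta)} e^{-\hat{\bar z}(v)}\,\d\mu(v)}{\int_{B_\delta(0)} e^{-\hat{\bar z}(v)}\,\d\mu(v)}.
\]

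The obstacle is now exposed: $\hat{\bar z}$ is only defined $\mu$-a.e.\ and is not continuous, so one may neither split $\hat{\bar z}(v)=\hat{\bar z}(w^\delta)+\hat{\bar z}(v-w^\delta)$ nor use $w^\delta\rightharpoonup 0$ on it directly. To circumvent this, I would fix $\eta>0$ and, using the density of $X^\star$ in $X_\mu^\star=\overline{X^\star}^{L^2(\mu)}$, pick $f\in X^\star$ with $\norm{\hat{\bar z}-f}_{L^2(\mu)}<\eta$; setting $g:=\hat{\bar z}-f$ and $z_g:=R_\mu g\in E$ gives $\absval{z_g}_E=\norm{g}_{L^2(\mu)}<\eta$. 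Since $f$ is continuous, $\absval{f(v)}\le\norm{f}_{X^\star}\delta$ on $B_\delta(0)$ and $\absval{f(v)-f(w^\delta)}\le\norm{f}_{X^\star}\delta$ on $B_\delta(w^\delta)$, while $f(w^\delta)\to 0$ as $\delta\to 0$ by weak convergence. Bounding $e^{-f}$ above on the numerator ball and below on the denominator ball therefore yields
\[
\ratio{x^\delta}{\bar z}{\delta}
\le
e^{-f(w^\delta)}\,e^{2\norm{f}_{X^\star}\delta}\,
\frac{\int_{B_\delta(w^\delta)} e^{-g}\,\d\mu}{\int_{B_\delta(0)} e^{-g}\,\d\mu}.
\]

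For the remaining ratio I would apply the Cameron--Martin formula once more, now shifting by $z_g\in E$, to obtain $\int_{B_\delta(a)} e^{-g}\,\d\mu = e^{\frac12\absval{z_g}_E^2}\,\mu(B_\delta(a+z_g))$; Anderson's inequality (\Cref{thm:anderson}) then bounds the numerator by $e^{\frac12\absval{z_g}_E^2}\mu(B_\delta(0))$, so the ratio is at most $\mu(B_\delta(0))/\mu(B_\delta(z_g))$. Applying \Cref{thm:OM} with $\Phi\equiv 0$ (so that $\mu^y=\mu$ and $I=\tfrac12\absval{\quark}_E^2$) to the pair $0,z_g\in E$ shows this converges to $\exp(\tfrac12\absval{z_g}_E^2)\le e^{\eta^2/2}$. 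Taking $\limsup_{\delta\to 0}$ for the fixed $f$ (whence $e^{-f(w^\delta)}\to 1$ and $e^{2\norm{f}_{X^\star}\delta}\to 1$, irrespective of how large $\norm{f}_{X^\star}$ is) gives $\limsup_{\delta\to 0}\ratio{x^\delta}{\bar z}{\delta}\le e^{\eta^2/2}$, and letting $\eta\searrow 0$ yields the claim. The main difficulty is entirely the non-continuity of $\hat{\bar z}$; it is dissolved by the $L^2(\mu)$-approximation, with the small Cameron--Martin remainder $g$ absorbed through Anderson's inequality and the prior small-ball asymptotics.
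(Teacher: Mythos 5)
Your proof is correct, but it is organized differently from the one in the paper, so a comparison is worthwhile. The paper never introduces the measurable linear functional $\hat{\bar z}=R_\mu^{-1}\bar z$: it applies the Cameron--Martin shift only to the \emph{numerator}, with a genuinely continuous $\hat h\in X^\star$ whose image $R_\mu\hat h$ merely approximates $\bar z$ in $E$, bounds the shifted integral by Anderson's inequality, handles the denominator by the elementary symmetrization $\mu(B_\delta(\bar z))\geq e^{-\frac12\absval{\bar z}_E^2}\mu(B_\delta(0))$ (via $a+a^{-1}\geq 2$ on $\pm\hat{\bar z}$), and only at the very end sends $R_\mu\hat h_n\to\bar z$ in $E$. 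You instead shift \emph{both} balls by $\bar z$, accept the discontinuous density $e^{-\hat{\bar z}}$, and then decompose $\hat{\bar z}=f+g$ with $f\in X^\star$ and $\norm{g}_{L^2(\mu)}<\eta$; the continuous part is controlled by weak convergence exactly as in the paper, and the remainder $g$ is absorbed by a second Cameron--Martin shift plus Anderson. Both routes rest on the same three pillars (Cameron--Martin, Anderson, density of $X^\star$ in $X_\mu^\star$), but yours isolates the obstruction --- the discontinuity of $\hat{\bar z}$ --- more explicitly, at the cost of one extra ingredient: you invoke \Cref{thm:OM} (with $\Phi\equiv 0$) to evaluate $\lim_{\delta\searrow 0}\mu(B_\delta(0))/\mu(B_\delta(z_g))$. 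That dependency is avoidable: the same symmetrization the paper uses for its denominator gives $\mu(B_\delta(z_g))\geq e^{-\frac12\absval{z_g}_E^2}\mu(B_\delta(0))$ for \emph{every} $\delta>0$, hence $\mu(B_\delta(0))/\mu(B_\delta(z_g))\leq e^{\eta^2/2}$ uniformly in $\delta$, which is both more elementary and slightly stronger than the limit statement. Two small points of hygiene: since you work in a general separable Banach space, the Anderson step should cite \Cref{thm:anderson2} rather than the finite-dimensional \Cref{thm:anderson}; and you should fix the sign convention of the Cameron--Martin density once (your $e^{-\hat{\bar z}}$ is consistent with shifting by $-\bar z$, and the prefactor $e^{-\frac12\absval{\bar z}_E^2}$ indeed cancels in the ratio, so this is fine as written).
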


\begin{proof}
For any $\hat h\in X^\star$, the Cameron--Martin formula \citep[Corollary 2.4.3]{bogachev1998gaussian} implies
\begin{align}
\begin{split}
\label{equ:proof_limsupballs_numerator}
    \mu(B_\delta(x^\delta))
    &=
    \int_{B_\delta(x^\delta)}\d\mu
    =
    \int_{B_\delta(x^\delta-R_\mu \hat h)}
    \exp\big(-\tfrac12 |R_\mu\hat h|_E^2 - \hat h(u)\big) \, \d\mu(u)
    \\
    &\leq
    \mu({B_\delta(x^\delta-R_\mu \hat h)})\, \exp\big(-\tfrac12 |R_\mu\hat h|_E^2\big)
    \sup_{u\in B_\delta(x^\delta -R_\mu \hat h)} e^{ - \hat h(u)}
    \\
    &\leq
    \mu({B_\delta(0)})
    \exp\big(-\tfrac12 |R_\mu\hat h|_E^2 - \hat h(x^\delta-R_\mu \hat h)\big)\sup_{u\in B_\delta(0)} e^{ - \hat h(u)},
\end{split}
\end{align}
where we used Anderson's inequality (\Cref{thm:anderson2}) in the last step.
Since
\[
\int_{B_\delta(0)} \exp(-(R_\mu^{-1}\bar z)(u)) \, \d\mu(u)
=
\int_{B_\delta(0)} \exp((R_\mu^{-1}\bar z)(u)) \, \d\mu(u)
\]
due to symmetry of the set $B_\delta(0)$, another application of the Cameron--Martin theorem yields
\begin{align}
\begin{split}
\label{equ:proof_limsupballs_denominator}
    \mu(B_\delta(\bar z))
    &= 
    \exp \big(-\tfrac{1}{2}|\bar z|_E^2\big)
    \int\limits_{B_\delta(0)} \exp(-(R_\mu^{-1}\bar z)(u))\, \d\mu(u)
    \\
    &=
    \exp \big(-\tfrac{1}{2}|\bar z|_E^2\big)
    \int\limits_{B_\delta(0)} \frac{\exp((R_\mu^{-1}\bar z)(u)) + \exp(-(R_\mu^{-1}\bar z)(u))}{2} \, \d\mu(u)
    \\
    &\geq
    \exp \big(-\tfrac{1}{2}|\bar z|_E^2\big) \, \mu(B_\delta(0)),
\end{split}
\end{align}
where we used the inequality $a+a^{-1} \geq 2$ for any $a>0$ (alternatively, \eqref{equ:proof_limsupballs_denominator} can be proven via Jensen's inequality).
Since $x^\delta \to \bar z$ weakly in $X$, it follows from \eqref{equ:proof_limsupballs_numerator} and \eqref{equ:proof_limsupballs_denominator} that, for any $\hat h\in X^\star$,
\begin{align*}
    \limsup_{\delta\searrow 0} \ratio{x^\delta}{\bar z}{\delta}
    &\leq
    \limsup_{\delta\searrow 0} 
    \exp\big(\tfrac{1}{2}|\bar z|_E^2 - \tfrac12|R_\mu\hat h|_E^2 - \hat h(x^\delta-R_\mu \hat h)\big)\!
    \sup_{u\in B_\delta(0)}e^{-\hat h(u)}
    \\
    &\leq
    \exp\big(\tfrac{1}{2}|\bar z|_E^2 - \tfrac12|R_\mu\hat h|_E^2 - \hat h(\bar z-R_\mu \hat h)\big)
    \\
    &=
    \exp\big(\tfrac{1}{2}|\bar z|_E^2 - \tfrac12|R_\mu\hat h|_E^2 -\langle R_\mu\hat h, \bar z -R_\mu\hat h\rangle_E \big)
    \\
    &=
    \exp\big(\tfrac{1}{2}|\bar z|_E^2 - \tfrac12|R_\mu\hat h|_E^2 -\langle \bar z, \bar z-R_\mu\hat h\rangle_E + |R_\mu\hat h-\bar z|_E^2\big),
\end{align*}
where we used the reproducing property \eqref{eq:reproprop}.
Choosing a sequence $(\hat h_n)_{n\in\N}$ in $X^\star$ such that $R_\mu\hat h_n \to \bar z$ strongly in $E$ (this is possible by density of $X^\star$ in $R_\mu^{-1}E$), replacing $\hat h$ by $\hat h_n$ in the above inequality and taking the limit $n\to\infty$ proves the claim.
\end{proof}

\begin{theorem}[{Anderson's inequality, version 1; \citealt[Theorem 3.10.25]{bogachev2007measure}}]\label{thm:anderson}
Let $A$ be a bounded centrally symmetric convex set in $\R^n$, $n\in\N$ and let $f\colon \R^{n} \to \R$ be
\begin{itemize}
    \item non-negative and locally integrable,
    \item symmetrical, i.e.\ $f(-x) = f(x)$ for each $x\in\R^{n}$, and
    \item unimodal, i.e.\ the sets $\{f\geq c\}$ are convex for all $c>0$.
\end{itemize}
Then, for every $h\in \R^n$ and every $t\in [0,1]$, one has
\[\int_A f(x+th)\, \d x \geq \int_A f(x+h)\, \d x.\]
In particular, for every $z\in\R^n$, $\int_{z+A} f(x)\, \d x\leq \int_A f(x)\, \d x.$
\end{theorem}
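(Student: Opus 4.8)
The plan is to reduce the statement to a purely geometric inequality about volumes of intersections of convex centrally symmetric sets, which is then settled by the Brunn--Minkowski inequality. First I would use the layer-cake (superlevel-set) representation
\[
f(x) = \int_0^\infty \1{\{f \geq c\}}(x)\,\d c,
\]
where each superlevel set $K_c \coloneqq \{x\in\R^n : f(x)\geq c\}$ is convex by unimodality and centrally symmetric because $f$ is symmetric. By Tonelli's theorem (legitimate since $f\geq 0$ is locally integrable and $A$ is bounded), for any $s\in\R^n$,
\[
\int_A f(x+s)\,\d x = \int_{A+s} f(y)\,\d y = \int_0^\infty \mathrm{vol}\big((A+s)\cap K_c\big)\,\d c.
\]
Thus it suffices to prove, for every fixed convex centrally symmetric set $K\subseteq\R^n$, that the function $g(s)\coloneqq \mathrm{vol}\big((A+s)\cap K\big)$ satisfies $g(th)\geq g(h)$ for all $t\in[0,1]$ and $h\in\R^n$; integrating this over $c$ yields the claim.

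The heart of the argument is to establish two properties of $g$. First, $g$ is symmetric: since $A$ and $K$ are both symmetric, reflection through the origin gives $-\big((A-s)\cap K\big) = (A+s)\cap K$, so $g(-s)=g(s)$ by invariance of Lebesgue measure under $x\mapsto -x$. Second, $g^{1/n}$ is concave on its support. This is where I would invoke Brunn--Minkowski: for $s_0,s_1\in\R^n$, $\lambda\in[0,1]$ and $s_\lambda \coloneqq (1-\lambda)s_0 + \lambda s_1$, convexity of $A$ and $K$ yields the inclusion
\[
(1-\lambda)\big[(A+s_0)\cap K\big] + \lambda\big[(A+s_1)\cap K\big] \subseteq (A+s_\lambda)\cap K,
\]
and the Brunn--Minkowski inequality then gives
\[
g(s_\lambda)^{1/n} \geq (1-\lambda)\,g(s_0)^{1/n} + \lambda\,g(s_1)^{1/n}.
\]

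With these two properties in hand, the one-dimensional restriction $\gamma(t)\coloneqq g(th)^{1/n}$ is an even, concave function on $\R$. An even concave function is nonincreasing on $[0,\infty)$: for $0\leq t_1\leq t_2$ one writes $t_1$ as a convex combination of $-t_2$ and $t_2$ and uses evenness to get $\gamma(t_1)\geq\gamma(t_2)$. In particular $\gamma(t)\geq\gamma(1)$, i.e.\ $g(th)\geq g(h)$, for every $t\in[0,1]$ (the case $g(h)=0$ being trivial). Integrating over $c$ proves the main inequality, and the ``in particular'' clause follows by setting $t=0$ and substituting: $\int_{z+A} f(x)\,\d x = \int_A f(y+z)\,\d y \leq \int_A f(y)\,\d y$. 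The main obstacle is the geometric core --- correctly identifying the Minkowski-combination inclusion and applying Brunn--Minkowski to deduce concavity of $g^{1/n}$; once concavity and symmetry of $g$ are secured, the layer-cake reduction and the even-concave monotonicity are routine. A secondary technical point is justifying the use of Tonelli and handling the boundary of the support of $g$, where $g^{1/n}$ is only concave on the region where it is positive, but neither affects the final inequality.
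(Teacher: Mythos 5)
Your proof is correct. Note that the paper does not prove this statement at all --- it is quoted verbatim from \citet[Theorem 3.10.25]{bogachev2007measure} as a known auxiliary result --- so there is no in-paper argument to compare against; what you have written is essentially Anderson's classical proof: layer-cake reduction to superlevel sets, the Minkowski-combination inclusion $(1-\lambda)\bigl[(A+s_0)\cap K\bigr]+\lambda\bigl[(A+s_1)\cap K\bigr]\subseteq (A+s_\lambda)\cap K$, Brunn--Minkowski to get concavity of $g^{1/n}$, and the even-plus-concave-implies-nonincreasing-on-$[0,\infty)$ step. The one technical point you flag is genuinely harmless: the set $\{s : (A+s)\cap K\neq\emptyset\}=K-A$ is convex and symmetric, so once $g(h)>0$ the entire segment $[-h,h]$ lies in it and Brunn--Minkowski applies to nonempty sets throughout, which is all the monotonicity argument needs.
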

\begin{theorem}[{Anderson's inequality, version 2; \citealt[Corollary 4.2.3]{bogachev1998gaussian}}]\label{thm:anderson2}
Let $\gamma$ be a centered Gaussian measure on a Banach space $X$. Let $A$ be a centrally symmetric convex set. Then for any $a\in X$, we have that $\gamma(A+a)\leq\gamma(A)$.
\end{theorem}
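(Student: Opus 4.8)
The plan is to reduce the infinite-dimensional statement to the finite-dimensional Anderson inequality already available as \Cref{thm:anderson}, exploiting that a centred Gaussian measure is completely determined by its images under continuous linear functionals. Throughout I use that $X$ is separable (as assumed everywhere in this paper): then the closed unit ball of $X^{\star}$ is weak-$\ast$ metrizable, hence weak-$\ast$ separable, which is exactly what makes the cylinder approximation below countable.

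First I would dispose of the cylinder case. Fix $f_{1},\dots,f_{n}\in X^{\star}$ and let $T\colon X\to\R^{n}$, $T(x)=(f_{1}(x),\dots,f_{n}(x))$. The pushforward $\gamma_{T}\coloneqq \gamma\circ T^{-1}$ is a centred Gaussian measure on $\R^{n}$. Replacing $f_{1},\dots,f_{n}$ by a subfamily that is orthonormal in $L^{2}(\gamma)$ (Gram--Schmidt) changes neither the generated cylinder $\sigma$-algebra nor the symmetry or convexity of the sets involved, and renders $\gamma_{T}$ the standard Gaussian $\calN(0,I)$, whose density $f(x)=(2\pi)^{-n/2}\exp(-\tfrac12\norm{x}_{2}^{2})$ is non-negative, symmetric, and unimodal. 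Hence for any bounded centrally symmetric convex Borel set $B\subseteq\R^{n}$ and the cylinder $A=T^{-1}(B)$, the identity $A+a=T^{-1}(B+Ta)$ gives, by \Cref{thm:anderson} with $h=Ta$,
\[
\gamma(A+a)=\gamma_{T}(B+Ta)\leq\gamma_{T}(B)=\gamma(A).
\]

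Next I would pass from cylinders to a general centrally symmetric \emph{closed} convex set $A$. By Hahn--Banach separation, $A$ is an intersection of closed half-spaces, and central symmetry lets us group these into symmetric slabs $\{x\in X:\absval{f(x)}\leq c_{f}\}$ with $c_{f}=\sup_{x\in A}f(x)$; discarding the trivial ones ($c_{f}=\infty$) and using weak-$\ast$ separability of the dual ball reduces this to a countable family, so $A=\bigcap_{k\in\N}\{x:\absval{f_{k}(x)}\leq c_{k}\}$ with $c_{k}<\infty$. Setting $A_{N}\coloneqq\bigcap_{k=1}^{N}\{x:\absval{f_{k}(x)}\leq c_{k}\}$, each $A_{N}$ is a centrally symmetric convex cylinder whose finite-dimensional image is a bounded box, so the previous step yields $\gamma(A_{N}+a)\leq\gamma(A_{N})$. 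Since $A_{N}\downarrow A$ and translation commutes with intersection, $A_{N}+a\downarrow A+a$, and continuity of the finite measure $\gamma$ from above gives $\gamma(A_{N})\to\gamma(A)$ and $\gamma(A_{N}+a)\to\gamma(A+a)$; passing to the limit in the cylinder inequality proves the claim for closed $A$. A merely measurable symmetric convex $A$ then differs from its closure only by a subset of its boundary, which is $\gamma$-null (boundaries of convex sets are null for a non-degenerate Gaussian measure, and one restricts to the topological support in the degenerate case), so $\gamma(A+a)=\gamma(\bar A+a)\leq\gamma(\bar A)=\gamma(A)$; in the applications of this paper $A$ is always an open symmetric ball $B_{\delta}(0)$, for which the same argument applies verbatim.

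I expect the approximation step to be the main obstacle: one must argue carefully that a centrally symmetric closed convex set in a separable Banach space really is a \emph{countable} intersection of symmetric slabs (this is precisely where separability enters, via weak-$\ast$ metrizability of the dual ball), and one must guarantee that the finite-dimensional images are non-degenerate so that \Cref{thm:anderson} applies with an honest Lebesgue density --- the $L^{2}(\gamma)$-orthonormalization is the clean device for this. By contrast, the cylinder computation and the monotone limit passage are entirely routine.
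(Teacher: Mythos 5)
A preliminary remark: the paper does not prove this statement at all — it imports it from Bogachev (Corollary 4.2.3) as a black box — so there is no in-paper proof to compare yours against; what follows assesses your argument on its own terms. The skeleton of your reduction is the standard one and is sound: by Hahn--Banach and separability, a closed symmetric convex set is a countable intersection $A = \bigcap_k \{x : \absval{f_k(x)} \le c_k\}$ of symmetric slabs with finite $c_k$; the partial intersections $A_N$ are symmetric convex cylinders over bounded boxes; $T^{-1}(B) + a = T^{-1}(B + Ta)$ holds by linearity; and continuity from above along $A_N \downarrow A$, $A_N + a \downarrow A + a$ transports the cylinder inequality to $A$.

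The genuine gap is in the cylinder step when $\gamma$ is degenerate, and the theorem is stated for an \emph{arbitrary} centred Gaussian measure. Gram--Schmidt in $L^2(\gamma)$ produces relations $f_i = \sum_j c_{ij} g_j$ that hold only $\gamma$-almost everywhere, i.e.\ exactly on a closed subspace $X_0$ with $\gamma(X_0) = 1$, not on all of $X$. You cannot discard the exceptional null set, because translates of $\gamma$-null sets need not be $\gamma$-null (quasi-invariance fails outside the Cameron--Martin space, and fails completely in degenerate directions). Concretely, writing $T = CS$ on $X_0$ with $S = (g_1,\dots,g_m)$ and $C$ the coefficient matrix, a shift $a \notin X_0$ gives $\gamma(A_N + a) = \gamma_S\bigl(C^{-1}(B + w) + b\bigr)$, where $Ta = Cb + w$ and $w \neq 0$ is the component of $Ta$ transverse to the range of $C$; the set $C^{-1}(B + w)$ is convex but \emph{not} centrally symmetric, so \Cref{thm:anderson} no longer applies. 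Your fallback ``restrict to the topological support'' hits exactly the same obstruction: $(A+a)\cap X_0$ is not symmetric when $a \notin X_0$. Closing this needs an extra ingredient, e.g.\ Gaussian log-concavity (Pr\'ekopa--Leindler), which yields $\nu(K+u) \le \nu(\tfrac12(K-K))$ for \emph{any} convex $K$, combined with the inclusion $\tfrac12(K-K) \subseteq C^{-1}(B)$. Alternatively, assume $\gamma$ nondegenerate: then continuous functionals vanishing $\gamma$-a.e.\ vanish identically, your relations are exact, and in fact no orthonormalization is needed at all (the pushforward under linearly independent functionals is automatically a nondegenerate Gaussian whose density is symmetric and unimodal). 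That weaker statement covers every application of \Cref{thm:anderson2} in this paper, but it is not the statement as written.

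A second, smaller defect is the passage from closed to merely measurable $A$: the claim ``boundaries of convex sets are $\gamma$-null'' is false in infinite dimensions. A compact symmetric convex set (e.g.\ a suitable ellipsoid in $\ell^2$) can carry positive Gaussian measure while having empty interior, hence it equals its own boundary. For the sets the paper actually needs — open balls $B_\delta(0)$ — there is an easy repair avoiding boundary-nullity altogether: write $B_\delta(0) = \bigcup_{\delta' < \delta} \bar{B}_{\delta'}(0)$ as an increasing union of closed symmetric convex sets and use continuity from below on both $B_\delta(0)+a$ and $B_\delta(0)$. With these two repairs (nondegeneracy or Pr\'ekopa for the cylinder step; increasing unions instead of null boundaries) your argument becomes a complete proof of the version of \Cref{thm:anderson2} that the paper actually uses.
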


\section*{Acknowledgments}
The authors would like to express their gratitude to Birzhan Ayanbayev, Martin Burger, Nate Eldredge, Remo Kretschmann, Hefin Lambley, Han Cheng Lie, Claudia Schillings, Björn Sprungk, and Tim Sullivan for fruitful discussions and pointing out both errors and solution strategies.

\bibliographystyle{abbrvnat}
\bibliography{lit}

\begin{thebibliography}{18}
\providecommand{\natexlab}[1]{#1}
\providecommand{\url}[1]{\texttt{#1}}
\expandafter\ifx\csname urlstyle\endcsname\relax
  \providecommand{\doi}[1]{doi: #1}\else
  \providecommand{\doi}{doi: \begingroup \urlstyle{rm}\Url}\fi

\bibitem[Agapiou et~al.(2018)Agapiou, Burger, Dashti, and
  Helin]{agapiou2018sparsity}
S.~Agapiou, M.~Burger, M.~Dashti, and T.~Helin.
\newblock Sparsity-promoting and edge-preserving maximum \textit{a posteriori}
  estimators in non-parametric {Bayesian} inverse problems.
\newblock \emph{Inverse Probl.}, 34\penalty0 (4):\penalty0 045002, 37, 2018.
\newblock \doi{10.1088/1361-6420/aaacac}.

\bibitem[Ayanbayev et~al.(2021{\natexlab{a}})Ayanbayev, Klebanov, Lie, and
  Sullivan]{ayanbayev2021gamma}
B.~Ayanbayev, I.~Klebanov, H.~C. Lie, and T.~J. Sullivan.
\newblock {$\Gamma$-convergence of Onsager{\textendash}Machlup functionals: I.
  With applications to maximum a posteriori estimation in Bayesian inverse
  problems}.
\newblock \emph{Inverse Problems}, 38\penalty0 (2):\penalty0 025005, dec
  2021{\natexlab{a}}.
\newblock \doi{10.1088/1361-6420/ac3f81}.

\bibitem[Ayanbayev et~al.(2021{\natexlab{b}})Ayanbayev, Klebanov, Lie, and
  Sullivan]{ayanbayev2021gammab}
B.~Ayanbayev, I.~Klebanov, H.~C. Lie, and T.~J. Sullivan.
\newblock {$\Gamma$-convergence of Onsager{\textendash}Machlup functionals:
  {II}. Infinite product measures on Banach spaces}.
\newblock \emph{Inverse Problems}, 38\penalty0 (2):\penalty0 025006, dec
  2021{\natexlab{b}}.
\newblock \doi{10.1088/1361-6420/ac3f82}.

\bibitem[Baker(1973)]{baker1973joint}
C.~R. Baker.
\newblock Joint measures and cross-covariance operators.
\newblock \emph{Trans. Amer. Math. Soc.}, 186:\penalty0 273--289, 1973.
\newblock \doi{10.2307/1996566}.

\bibitem[Bogachev(1998)]{bogachev1998gaussian}
V.~I. Bogachev.
\newblock \emph{Gaussian {Measures}}, volume~62 of \emph{Mathematical Surveys
  and Monographs}.
\newblock American Mathematical Society, Providence, RI, 1998.
\newblock \doi{10.1090/surv/062}.

\bibitem[Bogachev(2007)]{bogachev2007measure}
V.~I. Bogachev.
\newblock \emph{Measure theory. {V}ol. {I}, {II}}.
\newblock Springer-Verlag, Berlin, 2007.
\newblock \doi{10.1007/978-3-540-34514-5}.

\bibitem[Clason et~al.(2019)Clason, Helin, Kretschmann, and
  Piiroinen]{clason2019generalized}
C.~Clason, T.~Helin, R.~Kretschmann, and P.~Piiroinen.
\newblock Generalized modes in {B}ayesian inverse problems.
\newblock \emph{SIAM/ASA J. Uncertain. Quantif.}, 7\penalty0 (2):\penalty0
  652--684, 2019.
\newblock \doi{10.1137/18M1191804}.

\bibitem[Da~Prato and Zabczyk(2002)]{da2002second}
G.~Da~Prato and J.~Zabczyk.
\newblock \emph{Second order partial differential equations in {H}ilbert
  spaces}, volume 293 of \emph{London Mathematical Society Lecture Note
  Series}.
\newblock Cambridge University Press, Cambridge, 2002.
\newblock \doi{10.1017/CBO9780511543210}.

\bibitem[Da~Prato and Zabczyk(2014)]{da2014stochastic}
G.~Da~Prato and J.~Zabczyk.
\newblock \emph{Stochastic equations in infinite dimensions}, volume 152 of
  \emph{Encyclopedia of Mathematics and its Applications}.
\newblock Cambridge University Press, Cambridge, second edition, 2014.
\newblock \doi{10.1017/CBO9781107295513}.

\bibitem[Dashti et~al.(2013)Dashti, Law, Stuart, and Voss]{dashti2013map}
M.~Dashti, K.~J.~H. Law, A.~M. Stuart, and J.~Voss.
\newblock {MAP} estimators and their consistency in {Bayesian} nonparametric
  inverse problems.
\newblock \emph{Inverse Probl.}, 29\penalty0 (9):\penalty0 095017, 27, 2013.
\newblock \doi{10.1088/0266-5611/29/9/095017}.

\bibitem[D\"{u}rr and Bach(1978)]{durr1978onsager}
D.~D\"{u}rr and A.~Bach.
\newblock The {Onsager}--{Machlup} function as {Lagrangian} for the most
  probable path of a diffusion process.
\newblock \emph{Comm. Math. Phys.}, 60\penalty0 (2):\penalty0 153--170, 1978.
\newblock \doi{10.1007/BF01609446}.

\bibitem[Helin and Burger(2015)]{helin2015maximum}
T.~Helin and M.~Burger.
\newblock Maximum a posteriori probability estimates in infinite-dimensional
  {Bayesian} inverse problems.
\newblock \emph{Inverse Probl.}, 31\penalty0 (8):\penalty0 085009, 22, 2015.
\newblock \doi{10.1088/0266-5611/31/8/085009}.

\bibitem[Kretschmann(2019)]{kretschmann2019nonparametric}
R.~Kretschmann.
\newblock \emph{Nonparametric Bayesian Inverse Problems with Laplacian Noise}.
\newblock PhD thesis, Universität Duisburg-Essen, 2019.

\bibitem[Lambley and Sullivan(2022)]{hefin}
H.~Lambley and T.~J. Sullivan.
\newblock An order-theoretic perspective on modes and maximum a posteriori
  estimation in {B}ayesian inverse problems.
\newblock \emph{arXiv}, 2022.
\newblock \doi{10.48550/ARXIV.2209.11517}.

\bibitem[Lie and Sullivan(2018)]{lie2018equivalence}
H.~C. Lie and T.~J. Sullivan.
\newblock Equivalence of weak and strong modes of measures on topological
  vector spaces.
\newblock \emph{Inverse Probl.}, 34\penalty0 (11):\penalty0 115013, 22, 2018.
\newblock \doi{10.1088/1361-6420/aadef2}.

\bibitem[Sprungk(2017)]{sprungk2017numerical}
B.~Sprungk.
\newblock \emph{Numerical Methods for {B}ayesian Inference in {H}ilbert
  Spaces}.
\newblock PhD thesis, Technische Universität Chemnitz, 2017.

\bibitem[Stuart(2010)]{stuart2010inverse}
A.~M. Stuart.
\newblock Inverse problems: {A} {Bayesian} perspective.
\newblock \emph{Acta Numer.}, 19:\penalty0 451--559, 2010.
\newblock \doi{10.1017/S0962492910000061}.

\bibitem[Tr\`eves(1967)]{treves2016topological}
F.~Tr\`eves.
\newblock \emph{Topological vector spaces, distributions and kernels}.
\newblock Academic Press, New York-London, 1967.

\end{thebibliography}

\end{document}